\newcommand{\C}{\mathcal{C}_{X,N}}
\newcommand{\vsp}{\vspace{\headheight}}
\newcommand{\Rinf}{\mathbb{R}^{\infty}}
\newcommand{\R}{\mathbb{R}}
\newcommand{\tCb}{\tilde{\mathcal{C}}^{b}_{X,N}}
\theoremstyle{definition} \newtheorem{definition}{Definition}
\theoremstyle{plain} \newtheorem{lemma}[definition]{Lemma}
\newtheorem{theorem}[definition]{Theorem}
\newtheorem{corollary}[definition]{Corollary}
\theoremstyle{definition}
 \theoremstyle{plain}
\newtheorem{proposition}[definition]{Proposition}
\theoremstyle{remark} 
\theoremstyle{plain}
\begin{document}

\title{An open-closed cobordism category with background space}

\author{Elizabeth Hanbury \thanks{Department of Mathematics, University of Durham, South Road, Durham, DH1 3LE, UK. Email: liz.hanbury@gmail.com.} \thanks{Partially supported by EPSRC Doctoral Training Grant EP/P500397/1 and National University of Singapore Research Grant R-146-000-097-112} }

\maketitle

\abstract{In this paper we introduce an open-closed cobordism category with maps to a background space. We identify the classifying space of this category for certain classes of background space. The key ingredient is the homology stability of mapping class groups with twisted coefficients.}

\tableofcontents

\section{Introduction}

In the late seventies and early eighties, cobordism categories were used by Atiyah and Segal to give an axiomatic description of field theories. Indeed, much of the motivation to study cobordism categories, and many of the indications of how we should do so, come from mathematical physics. Informally, the relationship between mathematical physics and cobordism theory can be seen in the following; if we track the motion of a collection of strings through time then the space they sweep out, their worldsheet, is precisely a cobordism.

Cobordism categories have been studied in algebraic topology in connection with mapping class groups and infinite loop space theory. The first work of this kind was Tillmann's result in \cite{Tillmann} relating the classifying space of the $2$-dimensional cobordism category to the classifying space of the stable mapping class group. Then in \cite{4author} the authors identified the classifying space of the $d$-dimensional cobordism category for any $d \geq 1$, showing that it is homotopy equivalent to a familiar infinite loop space. Together, these two results give a new proof of Madsen and Weiss' generalization of the Mumford conjecture which allows the calculation of the cohomology of the stable mapping class group.

The cobordism categories studied in those papers feature cobordisms between \emph{closed} manifolds. These pertain to closed field theories in physics. Also of interest are \emph{open-closed} field theories and, correspondingly, open-closed cobordisms i.e. cobordisms between manifolds that may have boundary. Just as the worldsheet of a collection of (closed) strings is an (ordinary) cobordism, so the worldsheet of a collection of open and closed strings is an open-closed cobordism.

The definition is as follows. An oriented open-closed cobordism between two oriented $(d-1)$-manifolds $M_{1}$ and $M_{2}$ is an oriented $d$-dimensional manifold $W$ such that
\begin{enumerate}
\item[\textit{(i)}] $\partial W = (\partial_{in}W \sqcup \partial_{out}W) \cup \partial_{f}W$
\item[\textit{(ii)}] There are diffeomorphisms $\partial_{in}W \cong M_{1}$ and $\partial_{out}W \cong -M_{2}$ where $-M_{2}$ means $M_{2}$ with the opposite orientation.
\item[\textit{(iii)}] $(\partial_{in}W \sqcup \partial_{out}W) \cap \partial_{f}W = \partial (\partial_{f}W) = \partial(\partial_{in}W \sqcup \partial_{out}W).$
\end{enumerate}
An example of an open-closed cobordism is given in Figure \ref{exoccob}.
\begin{figure}
\begin{center}
\includegraphics{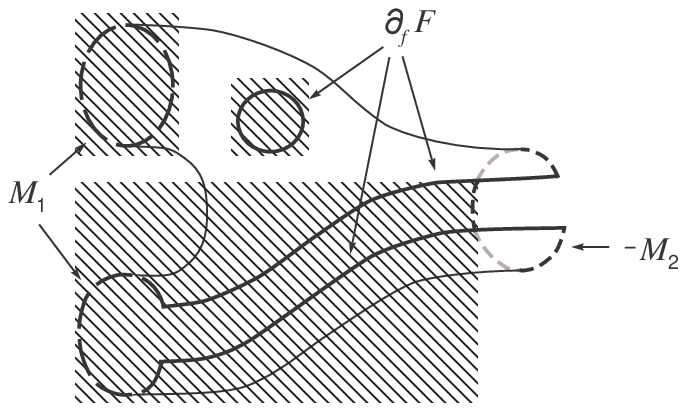}
\end{center}
\label{exoccob}
\caption{An open-closed cobordism}
\end{figure}
We call $\partial_{f}W$ the free boundary of $W$ and the closed components of $\partial_{f}W$ \emph{windows}. The open-closed cobordism in Figure \ref{exoccob} has one window. We think of the boundary of an open-closed cobordism from $M_{1}$ to $M_{2}$ as the union of three pieces; $M_{1}$, $M_{2}$ and the free boundary. Note that in particular $(ii)$ and $(iii)$ imply that $\partial_{f}W$ is a cobordism from $\partial M_{1}$ to $\partial M_{2}$.

Open-closed cobordism categories have recently been studied for example in \cite{BCR}, \cite{Lauda-Pfeiffer1} and \cite{Antonio}. In this paper we study a $2$-dimensional open-closed cobordism category in which the $1$-manifolds and the cobordisms carry a map to a background space. More precisely, for a fixed background space $X$ and subspace $N \subseteq X$, we study the category $\C$ whose objects are oriented $1$-manifolds $S$ embedded in $\mathbb{R}^{\infty}$ and equipped with a map $(S,\partial S) \to (X,N)$ and whose morphisms are oriented open-closed cobordisms $F$ embedded in $\mathbb{R}^{\infty} \times [0,1]$ and equipped with a map $(F,\partial_{f} F) \to (X,N)$. There is a topology on $\C$ which we give in section \ref{definitions}.

In open-closed string theory the strings move in a background manifold $M$ and there is often a collection of submanifolds $\{N_{b} \subseteq M\}_{b \in \mathcal{B}}$ specified. These are called the $D$-branes and the movement of an open string is restricted by the requirement that its endpoints must move within certain of the $D$-branes. The category $\C$ is intended to be a model of open and closed strings moving in $X$ with the components of $N$ as the set of $D$-branes.

The operation of disjoint union gives an ill-defined multiplicative structure on $\C$. This structure is enough to ensure that $B\C$ is an infinite loop space as we show in section \ref{infloopsp}. The main result of this paper is to identify that infinite loop space in the case that $X$ is simply-connected and $N$ is discrete. For technical reasons, we are forced to work with a modified version of $\C$ which we denote by $\tCb$.

In the recent preprint \cite{Josh}, the author studies cobordisms categories of manifolds with corners in a more general setting, dealing with higher dimensions and tangent structures on the manifolds. The techniques used are very different to those here, and follow \cite{4author}. One point of note is that \cite{Josh} gives a calculation of $B\mathcal{C}_{\ast,\ast}$ which differs from our calculation of $B\tilde{\mathcal{C}}^{b}_{\ast,\ast}$.

\vsp
\noindent \textit{Summary of results:}
We fix a space $X$, a subspace $N$ and a basepoint $\ast \in N$. Our techniques lead us to consider moduli spaces
\[
\mathscr{S}(F;X,N)_{\ast}: = E\mathrm{Diff}_{\mathrm{oc}}^{+}(F) \times_{\mathrm{Diff}_{\mathrm{oc}}^{+}(F)} \mathrm{Map}(F,
\partial_{f}F; X,N)_{\ast}
\]
where $F$ is a connected open-closed cobordism with $\partial_{out}F=S^{1}$, $\mathrm{Diff}_{\mathrm{oc}}^{+}(F)$ is the group of orientation-preserving diffeomorphisms that fix $\partial_{in} F \cup \partial_{out}F$ pointwise and $\mathrm{Map}(F,
\partial_{f}F; X,N)_{\ast}$ is the space of maps which carry $\partial_{in} F \cup \partial_{out}F$ to the basepoint.

In section \ref{homstab} we show that the spaces $\mathscr{S}(F;X,N)_{\ast}$ have good homology stability properties. This is a consequence of the homology stability of mapping class groups with twisted coefficients which was proved in \cite{Ralph-Ib} and \cite{Ivanov}. We actually require some extensions of the original stability theorems and we establish these in section \ref{homstab}.

We also consider stabilized versions of the spaces $\mathscr{S}(F;X,N)_{\ast}$. Let $F^{1}$ be
obtained from $F$ by gluing a torus with three holes to the outgoing
boundary of $F$ (here the torus with three holes is thought of as an open-closed cobordism from $S^{1}$ to itself). So $F^{1}$ is an open-closed cobordism with
$\partial_{in}F^{1}=\partial_{in}F$, $\partial_{out}F^{1} =
\partial_{out}F = S^{1}$ and $F^{1}$ has one more window and one
greater genus than $F$. \label{inclusion}The inclusion $F \to F^{1}$ induces a map
\[
\mathscr{S}(F;X,N)_{\ast} \to \mathscr{S}(F^{1};X,N)_{\ast}
\]
given by extending diffeomorphisms by the identity and extending the maps to $X$ by $c_{\ast}$, the constant map to the basepoint. Let $F^{2}$ be obtained from $F^{1}$ by
gluing on another torus with three holes and so on. Define $\mathscr{S}^{\infty}_{\infty}(F;X,N)_{\ast}$ to be
\[
\mathrm{hocolim} \hspace{1mm}
(\mathscr{S}(F;X,N)_{\ast} \to \mathscr{S}(F^{1};X,N)_{\ast} \to \mathscr{S}(F^{2};X,N)_{\ast} \to
\ldots).
\]

Using the homology stability for the spaces $\mathscr{S}(F;X,N)_{\ast}$ and a generalised group completion theorem, we prove the following theorem in section \ref{gpcomp}.
\begin{theorem}
\label{firsttheorem}
Suppose that $X$ is simply-connected and $N$ is discrete. For any connected open-closed cobordism $F$ with $\partial_{out}F=S^{1}$ there is a homology isomorphism
\[
\mathbb{Z} \times \mathbb{Z} \times \mathscr{S}_{\infty}^{\infty}(F;X,N)_{\ast} \to \Omega B \tCb.
\]
\end{theorem}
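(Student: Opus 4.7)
The plan is to apply a generalized group-completion theorem, in the spirit of Tillmann's argument~\cite{Tillmann} for closed surfaces, to a topological monoid $\mathcal{M}$ assembled from the moduli spaces $\mathscr{S}(F;X,N)_\ast$, and then to compare $B\mathcal{M}$ with $B\tCb$.

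Concretely, I would take $\mathcal{M}$ to be the topological monoid
\[
\mathcal{M} \;=\; \coprod_{[F]}\,\mathscr{S}(F;X,N)_\ast
\]
indexed by diffeomorphism classes of \emph{connected} open-closed cobordisms $F$ with $\partial_{in}F=\partial_{out}F=S^{1}$ (both circles mapped to the basepoint), with multiplication given by stacking cobordisms along $S^{1}$. Its set of components is $\pi_{0}(\mathcal{M})\cong\mathbb{N}\times\mathbb{N}$, graded by genus and window count, with group completion $\mathbb{Z}\times\mathbb{Z}$. A fixed torus with three holes $T\in\mathcal{M}$ is central up to homotopy, and multiplication by $T$ is precisely the stabilization $F\mapsto F^{1}$; consequently the telescope of $\cdot T$, restricted to the component of $F$, is exactly $\mathscr{S}_{\infty}^{\infty}(F;X,N)_\ast$.

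I would then invoke the generalized group-completion theorem. The homology stability proved in Section~\ref{homstab}, which uses simple-connectivity of $X$ and discreteness of $N$ to apply the twisted-coefficient mapping class group stability of~\cite{Ralph-Ib} and~\cite{Ivanov}, guarantees that iterated multiplication by $T$ eventually induces homology isomorphisms, so group completion yields a homology equivalence
\[
\mathbb{Z}\times\mathbb{Z}\times\mathscr{S}_{\infty}^{\infty}(F;X,N)_\ast \longrightarrow \Omega B\mathcal{M}.
\]

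The remaining ingredient is a weak equivalence $B\mathcal{M}\simeq B\tCb$. Viewing $\mathcal{M}$ as the submonoid of $\tCb$ consisting of connected endomorphisms of $(S^{1},c_{\ast})$, a Quillen Theorem~A style argument should supply this equivalence; here the passage from $\mathcal{C}_{X,N}$ to the modified category $\tCb$ is essential, as the adjustments in the definition of $\tCb$ are presumably designed precisely so that every component of $\tCb$ is reachable by a connected cobordism from $(S^{1},c_{\ast})$, making the inclusion of $\mathcal{M}$ cofinal. I expect this last identification to be the main obstacle, since it requires delicate control over the modified cobordism category and its connectivity properties.
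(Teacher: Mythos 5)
Your first two steps (the monoid $\mathcal{M}$ of connected endomorphisms of the circle, homological stability, and McDuff--Segal type group completion to get a homology equivalence $\mathbb{Z}\times\mathbb{Z}\times\mathscr{S}^{\infty}_{\infty}(F;X,N)_{\ast}\to\Omega B\mathcal{M}$) are plausible in outline, though even there you would need to justify applying group completion (e.g.\ that $\pi_{0}\mathcal{M}$ acts centrally on $H_{\ast}(\mathcal{M})$, and that left and right stabilization agree), and the identification of endomorphism spaces of $(\beta_{0},\varnothing)$ in $\tCb$ with the moduli spaces $\mathscr{S}(F;X,N)_{\ast}$ is itself a nontrivial step (it is Proposition \ref{morphism spaces} in the paper, using contractibility of embedding spaces and a collar argument that relies on $X$ simply-connected and $N$ discrete). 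The genuine gap is your final step, $B\mathcal{M}\simeq B\tCb$. First, it is false as stated: $B\mathcal{M}$ is connected, while $\pi_{0}B\tCb\cong\mathcal{B}_{N}$ is nontrivial whenever $N$ has more than one point; your guess that $\tCb$ is designed so that every component is reachable from $(S^{1},c_{\ast})$ is wrong --- objects with nonzero weighted signature admit no morphism to the circle at all. Second, even after restricting to the component $\tCb(0)$, a Quillen Theorem~A/cofinality argument cannot work: the relevant comma categories over $(\beta_{0},\varnothing)$ have as their homotopy type precisely the (stabilized) morphism spaces to the circle, i.e.\ spaces of the homotopy type of $\coprod\mathscr{S}(F^{w}_{g};X,N)_{\ast}$, which are far from contractible. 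Establishing $\Omega B\mathcal{M}\simeq\Omega B\tCb$ is essentially equivalent to the theorem itself, so deferring it to a ``Theorem~A style argument'' leaves the main content unproved.

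The paper avoids this comparison entirely: it applies the generalized group completion theorem of \cite{4author} (Theorem \ref{gp compl}) directly to the category $\tCb$, with the contravariant functor $\mathrm{Mor}_{\infty}$ given by the morphism space to $(\beta_{0},\varnothing)$ stabilized by composition with a torus with three holes. This requires three verifications you do not address: (i) \emph{every} morphism of $\tCb$ induces a homology isomorphism on $\mathrm{Mor}_{\infty}$ --- the stability corollaries only handle morphisms whose map to $X$ is constant, and Proposition \ref{panuba} handles general morphisms by factoring them into pieces with constant background map and cylinder pieces; (ii) the object map of $\mathrm{Mor}_{\infty}\wr\tCb$ is a Serre fibration; (iii) $B(\mathrm{Mor}_{\infty}\wr\tCb)$ is contractible (via a terminal object in $\mathrm{Mor}_{1}\wr\tCb$). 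The actual role of the modification $\tCb$ is also different from what you suppose: fixing the $1$-dimensional cobordism datum $l$ ensures that a morphism to the circle is determined up to isomorphism by its genus and number of windows, which is what makes $\mathrm{Mor}_{0}$ identifiable with a disjoint union of moduli spaces on which stabilization acts transitively enough for the group completion hypotheses to hold.
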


When $F$ is an ordinary cobordism, i.e. $\partial_{in}F$ and $\partial_{out}F$ are
closed and $F$ has no free boundary, the subspace $N$ plays no role. In this case we will write the space $\mathscr{S}(F;X,N)_{\ast}$ simply as $\mathscr{S}(F;X)_{\ast}$.
For such $F$ we may also wish to stabilize with respect to genus
only. Let $F^{[1]}$ denote the surface obtained from $F$ by gluing on a torus with two holes and let $F^{[2]}$ be obtained from $F^{[1]}$ in the same fashion. Define
\[
\mathscr{S}_{\infty}(F;X)_{\ast} = \mathrm{hocolim} \hspace{1mm}
(\mathscr{S}(F;X)_{\ast} \to \mathscr{S}(F^{[1]};X)_{\ast} \to \mathscr{S}(F^{[2]};X)_{\ast} \to
\ldots).
\]
The spaces $\mathscr{S}(F;X)_{\ast}$ and $\mathscr{S}_{\infty}(F;X)_{\ast}$ were defined and studied in \cite{Ralph-Ib}.

For any space $Y$ let $Q(Y)$ denote the infinite loop space $\mathrm{colim}_{k \to \infty} \Omega^{k} \Sigma^{k} Y$ and let $Y_{+}$ denote the union of $Y$ and a disjoint basepoint. Using the homology stability for the spaces $\mathscr{S}(F;X,N)_{\ast}$ and splitting techniques analogous to those in \cite{Bod-Till}, in section \ref{splittingsection} we prove the following.
\begin{theorem}
\label{secondtheorem}
Suppose that $X$ is simply-connected and $N$ is discrete. Let $F$ be a connected open-closed cobordism from some closed, oriented $1$-manifold to
$S^{1}$ and let $\bar{F}$ be the ordinary cobordism obtained by gluing a
disc to each window of $F$. Then there is a homology isomorphism
\[
\mathbb{Z} \times \mathscr{S}^{\infty}_{\infty}(F;X,N)_{\ast} \to
\mathscr{S}_{\infty}(\bar{F};X)_{\ast} \times Q((BS^{1} \times N)_{+}).
\]
\end{theorem}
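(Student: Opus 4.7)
The approach is to follow the splitting technique of \cite{Bod-Till}, decoupling the two types of stabilization built into $\mathscr{S}^{\infty}_{\infty}(F;X,N)_{\ast}$. Gluing a torus with three holes does two things at once: it raises the genus by one and it adds one window. I would introduce intermediate moduli obtained by separating the two operations, namely $\mathscr{S}^{w}(F;X,N)_{\ast}$, the $\mathrm{hocolim}$ of attaching pairs of pants along the free boundary (adding windows but not genus), and $\mathscr{S}^{g}(F;X,N)_{\ast}$, the $\mathrm{hocolim}$ of attaching tori with two holes along the outgoing circle (adding genus but not windows). The homology stability of Section \ref{homstab} ensures that each individual stabilization map is eventually a homology isomorphism, so by cofinality of the diagonal in the bidirectional $\mathrm{hocolim}$, the coupled space $\mathscr{S}^{\infty}_{\infty}(F;X,N)_{\ast}$ is homology equivalent to the iterated stabilization carried out first in windows and then in genus.

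Next I would construct the comparison map. Capping each window with a disc and extending maps by constants produces a natural morphism $\mathscr{S}(F;X,N)_{\ast} \to \mathscr{S}(\bar{F};X)_{\ast}$ compatible with genus-stabilization, yielding the first factor of the target after passage to the limit. For the second factor, each window is an embedded circle decorated by a map to $N$; because $N$ is discrete and $\mathrm{Diff}^{+}(S^{1}) \simeq S^{1}$, the moduli space of a single window is $BS^{1} \times N$. Stabilizing the window count assembles the windows into an unordered configuration of such decorated circles in $\bar{F}$, and by the standard configuration-space model for the free $E_{\infty}$-algebra, group completion of this configuration space is $Q((BS^{1} \times N)_{+})$; the $\mathbb{Z}$ on the left indexes the window count component of this free algebra.

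To prove the product map is a homology isomorphism I would run a Serre spectral sequence argument for the disc-capping map. After window- and genus-stabilization, the capping map becomes a quasifibration whose fibre is modelled by the configuration space of decorated windows inside $\bar{F}$. Because $X$ is simply-connected and the stabilized base $\mathscr{S}_{\infty}(\bar{F};X)_{\ast}$ is $\pi_{1}$-trivial on the relevant component (as in \cite{Ralph-Ib} and Theorem \ref{firsttheorem}), the local system on the base is trivial, and the spectral sequence collapses using homology stability together with the group-completion theorem. The main obstacle is identifying the fibre's group completion as exactly $Q((BS^{1} \times N)_{+})$: one must verify that in the stable limit diffeomorphisms permute windows freely and the per-window $S^{1}$-reparametrization is absorbed into the $BS^{1}$ factor, so that the window moduli is genuinely the free $E_{\infty}$-space on $(BS^{1} \times N)_{+}$ and Barratt--Priddy--Quillen applies. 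Once this is secured, combining the spectral sequence collapse with the configuration model yields the theorem.
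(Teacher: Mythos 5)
Your overall target decomposition (cap the windows for the first factor, treat the windows as decorated circles whose stabilized moduli group-completes to $Q((BS^{1}\times N)_{+})$) matches the paper, but the engine of your argument runs the fibration in the wrong direction, and the step you lean on is exactly the one you never justify. You assert that after stabilization the disc-capping map $\mathscr{S}^{\infty}_{\infty}(F;X,N)_{\ast}\to\mathscr{S}_{\infty}(\bar{F};X)_{\ast}$ ``becomes a quasifibration whose fibre is modelled by the configuration space of decorated windows inside $\bar{F}$.'' Capping discs is not a fibration in any evident sense, and its homotopy fibre is not visibly a configuration space of decorated circles: over a point $(\bar{F},\psi)$ the ``preimage'' consists of systems of disjoint embedded discs on which $\psi$ is constant at a point of $N$, a condition that varies wildly with the base point. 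Establishing a quasifibration or homology-fibration property here, with the fibre you name, is essentially equivalent to the splitting you are trying to prove, so as written the argument is circular at its core. The paper (following the actual mechanism of \cite{Bod-Till}) uses the honest fibration going the other way: \emph{restriction to the windows} gives
\[
\mathscr{S}(F';X)_{\ast}\to\mathscr{S}(F;X,N)_{\ast}\to E(\Sigma_{w}\wr S^{1})\times_{\Sigma_{w}\wr S^{1}}(LN)^{w},
\]
with the window moduli as \emph{base} and the surface moduli (windows converted to fixed boundary) as fibre; one then maps this to the trivial product fibration over the same base, uses Theorem \ref{desaru} to see the fibrewise disc-capping map $\mathscr{S}(F';X)_{\ast}\to\mathscr{S}(\bar{F};X)_{\ast}$ is a homology isomorphism in a stable range, and applies Zeeman comparison before passing to the limit and invoking Barratt--Priddy--Quillen with $LN\simeq N$, $ES^{1}\times_{S^{1}}LN\simeq BS^{1}\times N$.

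Two further points would still need repair even if you insisted on your route. First, your triviality-of-local-system claim rests on the assertion that $\mathscr{S}_{\infty}(\bar{F};X)_{\ast}$ is $\pi_{1}$-trivial on the relevant component; it is not --- its fundamental group maps onto a stable mapping-class-group-type group, which is perfect but far from trivial --- so the coefficient system would have to be handled by a genuine argument (the paper sidesteps this by comparing two fibrations over the \emph{same} base via the identity, so the $\pi_{1}$-actions match automatically). Second, your ``decouple the two stabilizations'' step quietly assumes that each individual window-adding map is eventually a homology isomorphism at fixed genus; for $N$ with more than one point this is precisely what is \emph{not} available (Theorem \ref{homstab2}/Corollary \ref{surfstab2} require $N$ a point, and the general discrete $N$ statement, Theorem \ref{homstab3}/Corollary \ref{surfstab3}, only holds after passing to the doubly stabilized limit), and your proposed pair-of-pants attachment along the free boundary is not one of the stabilizations for which the paper proves anything. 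None of this is needed in the paper's proof, which works with the finite-level fibration comparison and only takes limits at the end.
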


To state our main theorem we must first recall the definition of the spectrum $MTSO(2)$ studied in \cite{4author}, \cite{MadTill} and \cite{Madsen-Weiss}. Let $\mathrm{Gr}^{+}_{n}(\mathbb{R}^{2})$ be the Grassmannian of oriented $2$-planes in $\mathbb{R}^{n}$ and $\gamma_{n}$ be the canonical bundle over it. Let $\gamma_{n}^{\perp}$ denote the orthogonal complement of $\gamma_{n}$, consisting of pairs $(U,v)$ of an oriented $2$-plane $U$ in $\mathbb{R}^{n}$ and an $n$-vector $v$ perpendicular to it. The $n$th space in the spectrum $MTSO(2)$ is the Thom space $Th(\gamma_{n}^{\perp})$. There is a natural inclusion $\mathrm{Gr}_{2}^{+}(\mathbb{R}^{n}) \to \mathrm{Gr}_{2}^{+}(\mathbb{R}^{n+1})$ and we have that $\gamma^{\perp}_{n+1} |_{\mathrm{Gr}_{2}^{+}(\mathbb{R}^{n})} \cong \gamma_{n}^{\perp} \oplus \epsilon^{1}$ where $\epsilon^{1}$ is a trivial line bundle. Thus we obtain a map
\[
\Sigma MTSO(2)_{n}=\Sigma Th(\gamma_{n}^{\perp}) \simeq Th(\gamma_{n}^{\perp} \oplus \epsilon^{1}) \to Th(\gamma_{n+1})=MTSO(2)_{n+1}
\]
and this completes the definition of $MTSO(2)$.

It was shown in \cite{Ralph-Ib} that $\mathbb{Z} \times \mathscr{S}_{\infty}(\bar{F};X)_{\ast}$ has the same homology as the infinite loop space $\Omega^{\infty}(MTSO(2) \wedge X_{+})$. In section \ref{pfmainthm} we use this result together with Theorems \ref{firsttheorem} and \ref{secondtheorem} to prove our main theorem.
\begin{theorem}
When $X$ is simply-connected and $N$ is discrete there is a weak homotopy equivalence
\[
\Omega B \tCb \simeq \Omega^{\infty}(MTSO(2) \wedge X_{+}) \times Q((BS^{1} \times N)_{+}).
\]
\end{theorem}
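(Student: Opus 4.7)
The plan is to string together the two homology equivalences of Theorems \ref{firsttheorem} and \ref{secondtheorem} with the identification of $\mathbb{Z} \times \mathscr{S}_{\infty}(\bar{F};X)_{\ast}$ recorded immediately above the statement of the theorem, and then to promote the resulting zigzag of homology equivalences to a weak homotopy equivalence using the fact that every space involved is an infinite loop space. Pick a connected open-closed cobordism $F$ with $\partial_{out}F = S^{1}$, and let $\bar{F}$ be the closed cobordism obtained by capping off every window of $F$ with a disc. Multiplying both sides of Theorem \ref{secondtheorem} by a further $\mathbb{Z}$ factor, the two theorems supply maps
\[
\Omega B \tCb \xleftarrow{H_{\ast}} \mathbb{Z} \times \mathbb{Z} \times \mathscr{S}^{\infty}_{\infty}(F;X,N)_{\ast} \xrightarrow{H_{\ast}} \mathbb{Z} \times \mathscr{S}_{\infty}(\bar{F};X)_{\ast} \times Q((BS^{1} \times N)_{+})
\]
which are both homology isomorphisms. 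The result of \cite{Ralph-Ib} then yields a further homology equivalence
\[
\mathbb{Z} \times \mathscr{S}_{\infty}(\bar{F};X)_{\ast} \times Q((BS^{1} \times N)_{+}) \xrightarrow{H_{\ast}} \Omega^{\infty}(MTSO(2) \wedge X_{+}) \times Q((BS^{1} \times N)_{+}),
\]
producing a zigzag of homology equivalences linking $\Omega B \tCb$ to the space on the right of the statement.

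Next I would promote this zigzag to a weak homotopy equivalence. The source $\Omega B \tCb$ is a loop space and, by the infinite loop space structure constructed in section \ref{infloopsp}, is in fact an infinite loop space; the target is tautologically a product of infinite loop spaces; and the intermediate spaces are naturally $H$-spaces. Every path component of an $H$-space with abelian fundamental group is a simple space, and a homology equivalence between simple spaces is a weak homotopy equivalence on corresponding path components, by the relative Hurewicz and Whitehead theorems. Applied componentwise along the zigzag, this gives a weak equivalence between each component of $\Omega B \tCb$ and the corresponding component of $\Omega^{\infty}(MTSO(2) \wedge X_{+}) \times Q((BS^{1} \times N)_{+})$.

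The main obstacle, and the only remaining point, is to check that the bijection on $\pi_{0}$ induced by the zigzag agrees with the natural decompositions on the two sides. On the left, the two $\mathbb{Z}$ factors of Theorem \ref{firsttheorem} record the genus and the number of windows introduced by stabilisation, while the residual components of $\mathscr{S}^{\infty}_{\infty}(F;X,N)_{\ast}$ record the $D$-brane labels on the windows of $F$ itself. On the right, $\pi_{0}\Omega^{\infty}(MTSO(2) \wedge X_{+}) \cong \mathbb{Z}$ detects the Euler characteristic and $\pi_{0} Q((BS^{1} \times N)_{+})$ is free abelian on $\pi_{0}(BS^{1} \times N) = N$. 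A direct check that the stabilisation maps of Theorem \ref{firsttheorem} and the splitting of Theorem \ref{secondtheorem} intertwine these decompositions on $\pi_{0}$ then, combined with the componentwise weak equivalence above, completes the proof.
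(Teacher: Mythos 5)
The central step of your argument does not go through as stated. Your plan is to invert the zigzag
\[
\Omega B \tCb \longleftarrow \mathbb{Z} \times \mathbb{Z} \times \mathscr{S}^{\infty}_{\infty}(F;X,N)_{\ast} \longrightarrow \mathbb{Z} \times \mathscr{S}_{\infty}(\bar{F};X)_{\ast} \times Q((BS^{1} \times N)_{+}) \longrightarrow \Omega^{\infty}(MTSO(2) \wedge X_{+}) \times Q((BS^{1} \times N)_{+})
\]
by claiming that ``the intermediate spaces are naturally $H$-spaces'' with simple components, and then applying Hurewicz--Whitehead componentwise. But the middle term $\mathbb{Z} \times \mathbb{Z} \times \mathscr{S}^{\infty}_{\infty}(F;X,N)_{\ast}$ (and likewise $\mathbb{Z} \times \mathscr{S}_{\infty}(\bar{F};X)_{\ast}$) is not an $H$-space and its components are not simple: these are stabilized surface moduli spaces whose components fiber over $B\Gamma^{\infty}_{\infty}(F)$, so their fundamental groups contain stable mapping class groups, which are highly non-abelian and act nontrivially. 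This is exactly the group-completion phenomenon: the maps of Theorems \ref{firsttheorem} and \ref{secondtheorem} are homology isomorphisms but emphatically not weak equivalences, so they cannot be formally ``inverted'' and a chain of homology isomorphisms through a non-simple space does not transport weak homotopy type by a Whitehead argument. To salvage a pure zigzag argument you would need genuinely more machinery than you invoke (e.g.\ that both end spaces are $H\mathbb{Z}$-local because their components are simple, so both are models for the homology localization of the middle space, or a plus-construction argument); nothing of this sort is in your write-up, and your $\pi_{0}$ discussion does not address it.

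The paper closes this gap differently: it constructs an \emph{actual map} between the two loop spaces and only then uses simplicity. In section \ref{infloopmaps} it defines explicit functors out of the subcategory $\C^{b,c}$ of closed objects --- $\alpha$, which caps the windows with discs and lands in the ordinary cobordism category $\mathcal{C}^{+}_{2}(X)$, and $\beta$, which restricts to the windows --- yielding maps $a: \Omega B \C^{b,c} \to \Omega^{\infty}(MTSO(2)\wedge X_{+})$ and $b: \Omega B \C^{b,c} \to Q((BS^{1}\times N)_{+})$. These fit into a commutative square with the homology equivalences $f'$, $g$ and $h$ (Theorems \ref{malaysia}, \ref{splitting}, \ref{lalala}), from which $a \times b$ is a homology isomorphism between loop spaces and hence a weak equivalence; Theorem \ref{thailand} then identifies $\Omega B\C^{b,c}$ with $\Omega B \tCb$. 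So the missing ideas in your proposal are precisely the comparison maps $a$ and $b$ (and the compatibility diagram making your zigzag commute over an honest map), together with the passage through $\C^{b,c}$; without them, or without an explicit localization/plus-construction argument, the promotion from homology isomorphism to weak homotopy equivalence is unjustified.
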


\vsp
\noindent
\textit{Acknowledgements:} This work is part of my PhD thesis, supervised by Ulrike Tillmann. I'd like to thank her for her guidance throughout the project.

\section{Definition and properties of the category}

\subsection{Definitions}
\label{definitions}

In this section we give the full definition of $\C$ and define the modified category $\tCb$.

Let $S_{m,n}$ denote the union of $m$ oriented circles and $n$
oriented intervals and let $\mathrm{Emb} \hspace{0.5mm}(S_{m,n},\R^{k})$
denote the space of smooth embeddings, equipped with the Whitney
$C^{\infty}$-topology. Set
\[
\mathrm{Emb} \hspace{0.5mm}(S_{m,n}, \Rinf):= \mathrm{colim}_{k \to \infty}
\hspace{0.5mm}\mathrm{Emb} \hspace{0.5mm}(S_{m,n}, \R^{k}).
\]

Let $\mathrm{Diff}^{+}(S_{m,n})$ denote the group of orientation-preserving
diffeomorphisms of $S_{m,n}$. This group acts on the right of the embedding
space by precomposition. It also acts on the space $\mathrm{Map} \hspace{0.5mm} (S_{m,n}, \partial S_{m,n}; X,N)$. The
object space of $\C$ is defined to be the disjoint union of orbit
spaces
\[
\coprod_{m,n \geq 0} \mathrm{Emb} \hspace{0.5mm}(S_{m,n}, \Rinf)
\times_{\mathrm{Diff}^{+}(S_{m,n})} \mathrm{Map} \hspace{0.5mm}(S_{m,n}, \partial
S_{m,n}; X,N).
\]

We can think of an object as an embedded $1$-manifold with a map to the background space. We'll often write objects in the category as $\mathrm{Diff}^{+}$-equivalences classes $[e,f]$. When we have an object $\alpha=[e,f]$ and $e:S \to \mathbb{R}^{\infty}$ and $f:(S, \partial S) \to (X,N)$ we will say that $S$ is the \emph{underlying $1$-manifold} of $\alpha$.

Next we move on to defining the morphism space in the category. We require our cobordisms to have parametrized collars around their incoming and outgoing boundary. Explicitly, this means that if $F$ is a $2$-dimensional open-closed cobordism, we assume that $F$ is equipped with parametrizations
\[
c_{in}: [0,1) \times \partial_{in}F \to F \hspace{1em} \text{and} \hspace{1em} c_{out}:(-1,0] \times \partial_{out}F \to
F.
\]

If $\epsilon \in (0,1)$ and $F$ is an open-closed cobordism, $\mathrm{Diff}_{\epsilon}^{\ast}(F)$ denotes the group of
orientation-preserving diffeomorphisms of $F$ which preserve the
decomposition of $\partial F$ and restrict to product maps on the $\epsilon$-collars. We define
\[
\mathrm{Diff}^{\ast}(F):= \mathrm{colim}_{\epsilon \to 0}\mathrm{Diff}_{\epsilon}^{\ast}(F).
\]

Let $\mathrm{Emb}_{\epsilon}(F, \R^{n} \times [0,1])$ denote the space of
embeddings which embed the incoming boundary in $\R^{n} \times \{0\}$, the outgoing boundary in $\R^{n} \times \{1\}$
and embed the $\epsilon$-collars perpendicular to these walls.

Define
\[
\mathrm{Emb} \hspace{0.5mm}(F, \Rinf \times [0,1]):= \mathrm{colim}_{n \to \infty, \epsilon \to 0} \mathrm{Emb}_{\epsilon}(F, \R^{n} \times [0,1]).
\]

The morphism space of $\C$ is defined to be
\[
\coprod_{F} \mathrm{Emb} \hspace{0.5mm}(F, \Rinf \times [0,1])
\times_{\mathrm{Diff}^{\ast}(F)} \mathrm{Map} \hspace{0.5mm}(F,\partial_{f}F; X,N).
\]
In this disjoint union there is one open-closed cobordism from each
isomorphism class, two open-closed cobordisms $F, F'$ being
isomorphic if there is an orientation-preserving diffeomorphism $\theta: F \to F'$ with
$\theta(\partial_{in} F) =
\partial_{in} F'$ and $\theta(\partial_{out} F) =
\partial_{out}
F'$. 

We can think of a morphism as an embedded open-closed cobordism equipped with a map to the background space. We'll write morphisms as $\mathrm{Diff}^{\ast}$-equivalence classes $[h,\phi]$. When we have a morphism $m=[h,\phi]$ and $h:F \to \mathbb{R} \times [0,1]$ and $\phi:(F,\partial_{f}F) \to (X,N)$ we will say that $m$ has \emph{underlying open-closed cobordism} $F$.

Note that the assignment $(X,N) \mapsto \C$ defines a functor from
the category of pairs of spaces to the category of topological
categories. This functor is homotopy invariant i.e. a homotopy
equivalence of pairs $(X,N) \to (X',N')$ induces a homotopy
equivalence $B\C \to B\mathcal{C}_{X',N'}$.

Let us remark on two interesting choices of background space. In the case that $N=\emptyset$, there are no maps $(S, \partial S) \to (X,N)$ unless $\partial S$ is empty. Thus the only objects in $\C$ are closed $1$-manifolds. Similarly the only cobordisms are those with no free boundary. In this case $\C$ is the same as the ordinary $2$-dimensional oriented cobordism category with background space, $\mathcal{C}^{+}_{2}(X)$, as studied in \cite{4author}. When $X$ is contractible and $N$ is discrete, a map $(S, \partial S) \to (X,N)$ is the same thing as a labeling of the boundary points of $S$ by points in $N$ and a map $(F, \partial_{f}F) \to (X,N)$ is the same thing as a labeling of the components of $\partial_{f}F$ by points in $N$. In this case, $\C$ is a model of the open-closed cobordism category with labeling set $N$ as studied in \cite{BCR}.

We'll now move on to define the modified category $\tCb$. Let $\C^{b}$ denote the subcategory of $\C$ in which every component of every cobordism must have non-empty outgoing boundary. In $\C^{b}$, any morphism to a circle must be connected. The category $\tCb$ will be an enriched version of $\C^{b}$; its objects are like those of $\C^{b}$ except that they carry an extra piece of data and the morphisms must respect that data.

To explain this, let us first introduce a $1$-dimensional cobordism category $\mathcal{D}$. The objects in $\mathcal{D}$ are compact oriented $0$-dimensional manifolds. The morphisms in $\mathcal{D}$ are equivalence classes of oriented $1$-dimensional cobordisms. Two such cobordisms $L_{1}$ and $L_{2}$ are said to be equivalent if there is an orientation-preserving  diffeomorphism $I(L_{1}) \to I(L_{2})$, where $I(L_{i})$ denotes the union of the intervals in $L_{i}$, that restricts to the identity on the boundary. See Figure \ref{categoryD}.

\begin{figure}[h]
\begin{center}
\includegraphics{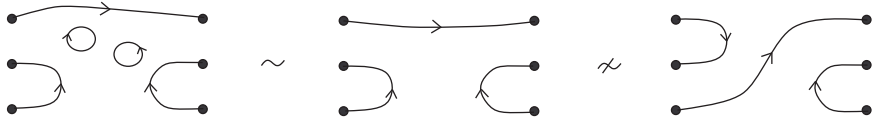}
\end{center}
\caption{Morphisms in the category $\mathcal{D}$
\label{categoryD}}
\end{figure}

There is a functor $\mathrm{R}: \C^{b} \to \mathcal{D}$ given by sending an object with underlying $1$-manifold $S$ to $\partial S$ and a morphism with underlying open-closed cobordism $F$ to the equivalence class of $\partial_{f}F$, see Figure \ref{functorR}. $\tCb$ is defined to be the Quillen over-category $\mathrm{R} / \emptyset$. Thus the objects of $\tCb$ are pairs $(\alpha, l)$ where $\alpha$ is an object in $\C^{b}$ and $l$ is a morphism in $\mathcal{D}$ from $R(\alpha) \to \emptyset $. A morphism from $(\alpha, l)$ to $(\beta,k)$ in $\tCb$ is a morphism $m:\alpha \to \beta$ in $\C^{b}$ such that $k \circ \mathrm{R}(m) = l$.

\begin{figure}[h]
\begin{center}
\includegraphics{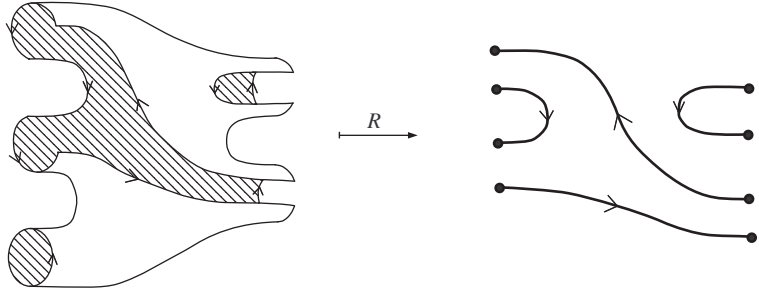}
\end{center}
\caption{The functor $R$}
\label{functorR}
\end{figure}

If $\alpha$ is an object in $\C^{b}$ whose underlying $1$-manifold is closed then there is only one morphism in $\mathcal{D}$ from $\mathrm{R}(\alpha)=\emptyset$ to $\emptyset$; the equivalence class of the empty cobordism. We will use $\varnothing$ to denote this morphism.

Note that if $\beta$ is an object in $\C^{b}$ whose underlying $1$-manifold is a circle then any morphism $m:(\alpha,l) \to (\beta, \varnothing)$ in $\tCb$ must be connected and satisfy $\varnothing \circ \mathrm{R}(m) = l$. If $F$ is the underlying cobordism of $m$ this means that there is an orientation-preserving diffeomorphism $I(\partial_{f}F) \to I(l)$ that restricts to the identity on the boundary. Thus the isomorphism type of $F$ is determined by only two invariants; the genus and the number of windows (the remaining boundary structure being prescribed by $l$.) This is the reason that we want to work with $\tCb$; in section \ref{gpcomp} we need to have good control over the space of morphisms from an arbitrary object to a circle.

\subsection{Path-Components}
\label{path-components}
When $N$ has more than one path-component, the classifying space $B\tCb$ is not
connected. In this section we will give a description of $\pi_{0}B\tCb$ in the case that $X$ is simply-connected.

For an oriented $1$-manifold $S$ let $\partial_{+}S \subseteq \partial S$ denote the collection of points that are positively-oriented in the orientation induced from $S$ and $\partial_{-}S$ those that are negatively-oriented.

\begin{definition}
The \emph{weighted signature} of an object $(\alpha,l)=([e,f],l)$ in $\tCb$ with underlying $1$-manifold $S$ is the function $b_{(\alpha,l)}:\pi_{0}N \to \mathbb{Z}$ given by
\[
b_{(\alpha,l)}(N_{i})=|\partial_{+}S \cap f^{-1}(N_{i})|-|\partial_{-}S \cap f^{-1}(N_{i})|.
\]
\end{definition}

This definition does not depend on the choice of representative $(e,f)$ for $\alpha$.

Note that
\begin{align*}
\text{(i)} & \sum_{N_{i} \in \pi_{0}N} b_{(\alpha,l)}(N_{i}) = |\partial_{+}S|-|\partial_{-}S|=0, \hspace{2mm}\\
\text{(ii)} & \sum_{N_{i} \in \pi_{0}N} |b_{(\alpha,l)}(N_{i})| \leq |\partial_{+}S| + |\partial_{-}S|<\infty,
\end{align*}
and as a consequence of (i),
\begin{align*}
\text{(iii)} \sum_{N_{i} \in \pi_{0}N} |b_{(\alpha,l)} (N_{i})| \hspace{1mm} \text{is always even}. \hspace{11mm}
\end{align*}
So each $b_{(\alpha,l)}$ is an element of
\[
\mathcal{B}_{N}:=\{b:\pi_{0}N \to \mathbb{Z}: \sum_{N_{i} \in \pi_{0}N} b(N_{i}) =0, \sum_{N_{i} \in \pi_{0}N} |b(N_{i})| \in 2 \mathbb{Z}\}.
\]
Furthermore, every element of $\mathcal{B}_{N}$ can occur as a weighted signature.

\begin{lemma}
\label{balpha=bbeta}
If there is a morphism $m:(\alpha,l) \to (\beta,k)$ in $\tCb$  then $b_{(\alpha,l)}=b_{(\beta,k)}$.
\end{lemma}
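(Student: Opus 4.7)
The plan is to analyze the underlying cobordism of $m$ and its map to $(X,N)$, using orientation bookkeeping at the corners of $F$ to see that the contributions on $\partial S$ and on $\partial T$ cancel out in $b_{(\alpha,l)} - b_{(\beta,k)}$. The data in $l$ and $k$ play no role beyond making $m$ a well-defined morphism of $\tCb$; the statement is really about $\C^b$.

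Write $m = [h,\phi]$ with underlying open-closed cobordism $F$ and underlying map $\phi : (F, \partial_f F) \to (X,N)$. Let $S, T$ be the underlying $1$-manifolds of $\alpha$ and $\beta$ with underlying maps $f, g$, so that $\partial_{in} F \cong S$ (orientation-preservingly) and $\partial_{out} F \cong -T$, and $\phi$ restricts to $f$ on $\partial_{in} F$ and to $g$ on $\partial_{out} F$. Since $\phi$ sends $\partial_f F$ into $N$, each connected component of $\partial_f F$ is mapped into a single path-component $N_i$ of $N$. By the axioms for an open-closed cobordism, $\partial(\partial_f F) = \partial S \sqcup \partial T$ as a set; circle components of $\partial_f F$ contribute no boundary points, while each interval component $I \subset \partial_f F$ contributes two endpoints in $\partial S \sqcup \partial T$, both mapped into the same $N_i$.

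The key step is a local orientation computation. A quadrant model shows that at any corner $p$ of $F$, the $\pm 1$ sign of $p$ as a boundary point of $\partial_f F$ (with orientation induced from $F$) is the opposite of its sign as a boundary point of $\partial_{in} F \sqcup \partial_{out} F$. Combined with $\partial_{in} F \cong S$ and $\partial_{out} F \cong -T$, this gives
\begin{align*}
\mathrm{sgn}_{\partial_f F}(p) &= -\,\mathrm{sgn}_S(p) \quad \text{if } p \in \partial S, \\
\mathrm{sgn}_{\partial_f F}(p) &= +\,\mathrm{sgn}_T(p) \quad \text{if } p \in \partial T,
\end{align*}
where $\mathrm{sgn}_M(p) = +1$ if $p \in \partial_+ M$ and $-1$ if $p \in \partial_- M$.

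Fix $N_i \in \pi_0 N$. For each interval $I \subset \partial_f F$ with $\phi(I) \subseteq N_i$, its two endpoints $p_1, p_2$ satisfy $\mathrm{sgn}_{\partial_f F}(p_1) + \mathrm{sgn}_{\partial_f F}(p_2) = 0$. Splitting into cases according to whether $p_1, p_2$ lie in $\partial S$ or $\partial T$ and applying the two sign relations above shows that in every case the joint contribution of $p_1, p_2$ to $b_{(\alpha,l)}(N_i) - b_{(\beta,k)}(N_i)$ vanishes: if both lie in $\partial S$ (resp.\ in $\partial T$) the relation gives $\mathrm{sgn}_S(p_1) + \mathrm{sgn}_S(p_2) = 0$ (resp.\ $\mathrm{sgn}_T(p_1) + \mathrm{sgn}_T(p_2) = 0$), while if one lies in each the relation gives $\mathrm{sgn}_S(p_1) = \mathrm{sgn}_T(p_2)$. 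Since every point of $(\partial S \sqcup \partial T) \cap \phi^{-1}(N_i)$ is the endpoint of a unique such interval, summing over intervals yields $b_{(\alpha,l)}(N_i) = b_{(\beta,k)}(N_i)$ for each $N_i$, hence $b_{(\alpha,l)} = b_{(\beta,k)}$. The only real obstacle is the corner sign computation; after that, the combinatorial count is immediate.
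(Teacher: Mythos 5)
Your proof is correct and follows essentially the same route as the paper: each interval component of $\partial_{f}F$ maps into a single path-component of $N$, and orientation considerations show each such interval joins a point of $\partial_{+}S \cup \partial_{-}T$ to a point of $\partial_{-}S \cup \partial_{+}T$, so the counts match. The only difference is that you make the corner sign computation explicit where the paper appeals to a figure.
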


\begin{proof}
Let $\alpha=[e,f]$ and $\beta=[\tilde{e},\tilde{f}]$ have underlying $1$-manifolds $S,\tilde{S}$ respectively. Suppose that $m=[h,\phi]$ has underlying open-closed cobordism $F$. The path-components of $\partial_{f}F$ are either windows or intervals joining points of $\partial_{+}S \cup \partial_{-}\tilde{S}$ to points of $\partial_{-}S \cup \partial_{+}\tilde{S}$ (see Figure \ref{occob1} where points of $\partial_{+}S \cup \partial_{-}\tilde{S}$ are marked $\odot$ and points of $\partial_{-}S \cup \partial_{+}\tilde{S}$ are marked $\bullet$). If $L$ is such an interval then $\phi(l) \subseteq N$ is a path from $\phi(\partial_{-}L)$ to $\phi(\partial_{+}L)$. Hence if two points are joined by an interval in $\partial_{f}F$, they must map into the same path-component of $N$. Thus for each path-component $N_{i}$ we must have
\[
\begin{array}{ll}
|(\partial_{+}S \cup \partial_{-}\tilde{S}) \cap f^{-1}(N_{i})| \\
= |(\partial_{-}S \cup \partial_{+}\tilde{S}) \cap f^{-1}(N_{i})|
\end{array}
\]
which implies $b_{(\alpha,l)}=b_{(\beta,k)}$.
\end{proof}

\begin{figure}[h]
\begin{center}
\includegraphics{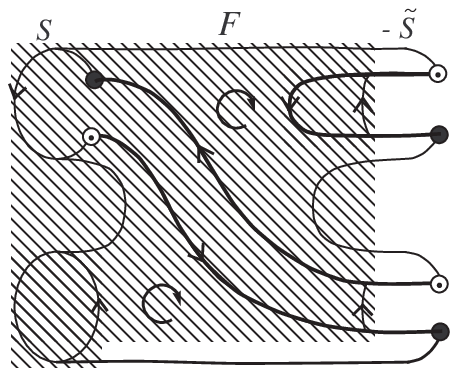}
\end{center}
\caption{Joining oriented points}
\label{occob1}
\end{figure}

\begin{corollary}
\label{decomposition}
The category $\tCb$ decomposes as the disjoint union
\[
\tCb = \coprod_{b \in \mathcal{B}_{N}} \tCb(b)
\]
where $\tCb(b)$ is the full-subcategory of $\tCb$ whose objects are those with weighted signature $b$.
\end{corollary}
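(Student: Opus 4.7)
The plan is to deduce this corollary directly from Lemma \ref{balpha=bbeta} together with the observation that the weighted signature is a locally constant invariant of an object. Concretely, every object $(\alpha,l)$ of $\tCb$ has a well-defined weighted signature $b_{(\alpha,l)} \in \mathcal{B}_{N}$, so the object space splits set-theoretically as the disjoint union, indexed by $b \in \mathcal{B}_{N}$, of those objects whose weighted signature equals $b$. Lemma \ref{balpha=bbeta} then says that no morphism crosses between different pieces, so taking $\tCb(b)$ to be the full subcategory on objects of weighted signature $b$ gives a decomposition of $\tCb$ as a category.

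First I would verify that this splitting is actually a splitting of topological categories, not merely of underlying discrete categories. For an object $(\alpha,l)$ with $\alpha=[e,f]$ and underlying $1$-manifold $S$, the intersections $\partial_{\pm}S \cap f^{-1}(N_i)$ are finite sets whose cardinalities are constant as $(e,f)$ varies continuously: the underlying $1$-manifold $S$ varies in a discrete set (as the orbits of $\mathrm{Diff}^{+}(S_{m,n})$ already fix the topological type), the partition of $\partial S$ into $\partial_{\pm}$ is determined by the orientation, and a continuous deformation of $f$ cannot change into which path-component of $N$ each boundary point is sent. Hence the function $(\alpha,l) \mapsto b_{(\alpha,l)}$ is locally constant on the object space of $\tCb$, and the object space splits as a topological disjoint union indexed by $b \in \mathcal{B}_{N}$.

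Next I would handle the morphism space. Since $\tCb(b)$ is defined as the full subcategory on objects of weighted signature $b$, its morphism space is exactly the preimage, under source and target, of $\{b\}\times\{b\}$. By Lemma \ref{balpha=bbeta} the source and target of any morphism have equal weighted signatures, so the morphism space splits as the disjoint union of these preimages. Since source and target are continuous and the weighted signature is locally constant on objects, each piece is both open and closed in the morphism space, giving the required topological decomposition.

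The only real subtlety, and thus the "main obstacle," is the topological part: checking that the weighted signature is genuinely locally constant rather than merely a set-valued invariant. Once that is in hand, the rest is formal. Composition preserves the subcategories because composable morphisms already agree on weighted signature at their shared object, and identities clearly lie in the appropriate $\tCb(b)$. Therefore $\tCb = \coprod_{b \in \mathcal{B}_{N}} \tCb(b)$ as topological categories.
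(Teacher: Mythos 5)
Your proposal is correct and follows essentially the same route as the paper, which states the corollary as an immediate consequence of Lemma \ref{balpha=bbeta} (morphisms preserve the weighted signature, so the full subcategories $\tCb(b)$ cannot be connected by any morphism). Your additional check that the weighted signature is locally constant on the object space, so that the splitting is one of topological categories, is a sound elaboration of a point the paper leaves implicit.
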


\begin{proposition}
$\pi_{0} B\tCb \cong \mathcal{B}_{N}$.
\end{proposition}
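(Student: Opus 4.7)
By Corollary \ref{decomposition}, $B\tCb = \coprod_{b\in \mathcal{B}_{N}} B\tCb(b)$, so it suffices to prove that for every $b\in\mathcal{B}_{N}$ the space $B\tCb(b)$ is non-empty and path-connected. Non-emptiness is easy: given $b$, build an object whose underlying $1$-manifold is a disjoint union of $\tfrac12\sum_{i}|b(N_{i})|$ oriented intervals together with enough extra cancelling pairs, each interval mapped so that its positive/negative endpoint lands in the prescribed component of $N$, and take the matching $l$ that pairs each positive endpoint with the corresponding negative one. This realises every $b\in\mathcal{B}_{N}$.

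For connectedness, the plan is to show that any two objects $(\alpha,l)$ and $(\beta,k)$ with weighted signature $b$ are linked by a zig-zag of morphisms in $\tCb(b)$. First I would reduce to a purely combinatorial statement by noting that two objects of the same underlying topological type (number of circle and interval components, assignment of each boundary point to a path-component of $N$, and combinatorial data of $l$) lie in the same path-component of the object space: the embedding space $\mathrm{Emb}(S_{m,n},\Rinf)$ is contractible, and, because $X$ is simply-connected, the relevant components of $\mathrm{Map}(S_{m,n},\partial S_{m,n};X,N)$ are determined exactly by the components of $N$ hit by the individual boundary points (circle components contribute nothing since $\pi_{1}(X)=1$).

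It then remains to connect all combinatorial types with the same signature $b$ by zig-zags. I would do this using a short list of elementary morphisms whose existence in $\tCb$ is easy to check: (a) a pair of pants $S^{1}\sqcup S^{1}\to S^{1}$, together with its reverse, which lets us merge or split circle components of the underlying $1$-manifold; (b) a disc viewed as a morphism $\emptyset \to S^{1}$, which creates or (in a zig-zag) removes circle components; (c) a pair of pants viewed as a morphism $S^{1}\to S^{1}$ whose third boundary circle is taken as a window, which changes the number of circles contributing to $l$; and (d) ``$H$-shaped'' rectangular cobordisms $I\sqcup I\to I\sqcup I$ whose free boundary recouples the endpoints of two intervals, allowing arbitrary rearrangement of the matching data of $l$ and of the component-labels of endpoints, subject only to preservation of $b$. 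Every component of each of these cobordisms has non-empty outgoing boundary, so they live in $\mathcal{C}^{b}$; and the compatibility $k\circ R(m)=l$ can always be arranged by choosing the free-boundary structure appropriately.

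The main technical obstacle will be step (d): one must verify that the elementary recoupling morphisms, used in a controlled sequence, suffice to pass between any two combinatorial types of signature $b$, while simultaneously adjusting the number of windows in $l$ and respecting $k\circ R(m)=l$. Once this bookkeeping is done, every object of $\tCb(b)$ is connected by a zig-zag to the canonical model described above, so $B\tCb(b)$ is path-connected and $\pi_{0}B\tCb\cong\mathcal{B}_{N}$.
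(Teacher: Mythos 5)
Your reduction to combinatorial types (contractibility of the embedding spaces, plus $\pi_{1}X=1$ controlling the components of the mapping spaces of $1$-manifolds) and your realisation of every $b\in\mathcal{B}_{N}$ as a weighted signature are fine. The genuine gap is exactly the step you yourself flag as (d), and it is not routine bookkeeping: it is where the entire content of the proposition sits. A morphism $(\alpha,l)\to(\beta,k)$ in $\tCb$ must satisfy $k\circ\mathrm{R}(m)=l$, and your assertion that this ``can always be arranged by choosing the free-boundary structure appropriately'' is unjustified -- and false for a direct morphism in general: the paper constructs, from $b_{(\alpha,l)}=b_{(\beta,k)}$, a surface morphism $m:\alpha\to\beta$ in $\C^{b}$ (pairing boundary points lying over the same path-component of $N$, filling in a connected surface, and using simple-connectivity of $X$ to extend the background map over it) and then notes explicitly that one does \emph{not} know $k\circ\mathrm{R}(m)=l$. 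Your zig-zag of elementary moves is supposed to absorb this discrepancy, but the controlled sequence is never produced: you do not check that each H-move is legal (each free-boundary arc maps into $N$, so it may only join endpoints lying in the same path-component of $N$ -- a per-arc constraint strictly finer than ``preservation of $b$''), that the matching datum $l$ transforms under each move so that the over-category condition holds at every stage, nor that all matchings of a given signature are reachable. A smaller omission: defining the background maps on your elementary cobordisms also uses $\pi_{1}X=1$ (to extend over the $2$-dimensional pieces), which you invoke only for $1$-manifolds.

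For comparison, the paper sidesteps all of this with a length-two zig-zag through a cleverly chosen middle object: given $(\alpha,l)$ and $(\beta,k)$ of the same signature, set $(\gamma,j)=(\alpha\cup-\alpha\cup\beta,\ l\cup-l\cup k)$. The identity cylinder on $\alpha$ ``bent around'' gives a morphism $\emptyset\to(\alpha\cup-\alpha,\ l\cup-l)$, hence a morphism $(\beta,k)\to(\gamma,j)$, and bending $m\cup\mathrm{id}_{\alpha}$ gives the morphism $(\alpha,l)\to(\gamma,j)$; the required compatibilities with $j$ hold automatically because in $\mathcal{D}$ closed circles are discarded, so the strips of the bent cylinders cancel $l$ against $-l$. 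If you want to keep your generators-and-moves route you must actually carry out the combinatorial argument in (d); as written, the proof is incomplete precisely at the point the paper's twisting construction is designed to avoid.
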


\begin{proof}
Given Corollary \ref{decomposition} it suffices to show that each $B\tCb(b)$ is path-connected. To do this it is enough to show that for every pair of objects $(\alpha,l)$ and $(\beta,k)$ in $\tCb(b)$ there exists a third object $(\gamma,j)$ and morphisms $(\alpha,l) \to (\gamma,j)$ and $(\beta,k) \to (\gamma,j)$. The constructions below are illustrated in Figure \ref{alphabetagamma}.

\begin{figure}[t]
\begin{center}
\includegraphics{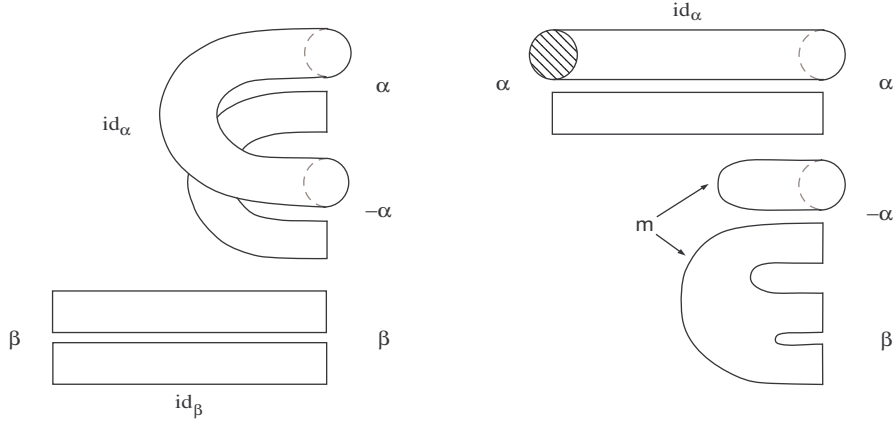}
\end{center}
\caption{Morphisms $\alpha \to \gamma$ and $\beta \to \gamma$}
\label{alphabetagamma}
\end{figure}

Let $-\alpha$ be the same as $\alpha$ but with the underlying $1$-manifold having the opposite orientation and define $(\gamma,j)=(\alpha \cup -\alpha \cup \beta, l \cup -l \cup k)$. Here we implicitly assume that the embeddings of the objects have been shifted so as to be disjoint. The identity morphism $\mathrm{id}_{\alpha}$ can be twisted around to give a morphism $\emptyset \to (\alpha \cup -\alpha,l \cup -l)$ and if we take the union of this twisted morphism with $\mathrm{id}_{\beta}$ we obtain a morphism
\[
(\beta,k) \to (\alpha \cup -\alpha \cup \beta, l \cup -l \cup k)=(\gamma,j).
\]

To construct a morphism $(\alpha,l) \to (\gamma,j)$ we first show that there is a morphism $m:\alpha \to \beta$ in $\C^{b}$. Suppose that $\alpha=[e,f]$, $\beta=[\tilde{e},\tilde{f}]$ and they have underlying $1$-manifolds $S$, $\tilde{S}$ respectively. Since $b_{(\alpha,l)} = b_{(\beta,k)}$ it is possible to pair the points of $\partial_{+}S \cup \partial_{-} \tilde{S}$ with the points of $\partial_{-}S \cup \partial_{+} \tilde{S}$ in such a way that two paired points map into the same component of $N$. If we attach an oriented interval between paired points we obtain a cobordism $S_{0}$ from $\partial S$ to $\partial \tilde{S}$.

The closed $1$-manifold $S \cup S_{0} \cup -\tilde{S}$ is the boundary of some connected, oriented open-closed cobordism $F$ from $S$
to $\tilde{S}$ (with $\partial_{f}F=S_{0}$). Furthermore there is an embedding $h: F \to \Rinf \times [0,1]$
with $h \vert_{\partial_{in}F} = e$ and $h
\vert_{\partial_{out}F} = \tilde{e}$. 

Because of the way that $S_{0}$ was constructed, the maps $f: (S,\partial S) \to (X,N)$ and $\tilde{f}:(\tilde{S}, \partial \tilde{S}) \to (X,N)$ can be extended over $S_{0}$ to give a map $\bar{f}: (\partial F,S_{0}) \to (X,N)$. Since $X$ is simply-connected, $\bar{f}$ can be extended to a map $\phi: (F, \partial_{f}F) \to (X,N)$. Now $m=[h,\phi]:\alpha \to \beta$ is our required morphism.

Note that $m$ does not necessarily give a morphism $(\alpha,l) \to (\beta,k)$ because we don't know that $k \circ \mathrm{R}(m)=l$. However, the morphism $m \cup \mathrm{id}_{\alpha}$ from $(\alpha \cup \alpha) \to (\alpha \cup \beta)$ can be twisted around to give the required morphism
\[
(\alpha,l) \to (\alpha \cup -\alpha \cup \beta, l \cup -l \cup k)=(\gamma,j).
\]
\end{proof}

Disjoint union in the category corresponds to addition of elements
in $\mathcal{B}_{N}$. Thus we have
\begin{corollary}
When $X$ is simply-connected, $\pi_{0}B\tCb$ is a group.
\end{corollary}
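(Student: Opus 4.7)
The plan is to combine the preceding proposition, which gives a bijection $\pi_0 B\tCb \cong \mathcal{B}_N$, with the direct observation that $\mathcal{B}_N$ is itself an abelian group under pointwise addition.

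First I would check that $\mathcal{B}_N$ is closed under pointwise sum and under negation $b \mapsto -b$. Both the vanishing condition $\sum_{N_i \in \pi_0 N} b(N_i) = 0$ and the implicit finite-support condition $\sum_{N_i \in \pi_0 N} |b(N_i)| < \infty$ are preserved by sum and negation, and the evenness condition is redundant, being forced by the vanishing condition exactly as in the derivation of (iii) from (i). Hence $\mathcal{B}_N$ is a subgroup of $\bigoplus_{\pi_0 N} \mathbb{Z}$, with identity the zero function.

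Next I would verify that the identification $[(\alpha,l)] \mapsto b_{(\alpha,l)}$ of the preceding proposition is a homomorphism of monoids, where the source carries the monoid structure induced by disjoint union in $\tCb$. If $(\alpha,l)$ and $(\beta,k)$ have underlying $1$-manifolds $S$ and $\tilde{S}$ respectively, then after a translation of embeddings to make them disjoint the object $(\alpha,l) \sqcup (\beta,k)$ has underlying $1$-manifold $S \sqcup \tilde{S}$, so that $\partial_{\pm}(S \sqcup \tilde{S}) = \partial_{\pm} S \sqcup \partial_{\pm} \tilde{S}$ and the defining map into $(X,N)$ restricts to the given maps on each summand. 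The identity
\[
b_{(\alpha,l) \sqcup (\beta,k)}(N_i) = b_{(\alpha,l)}(N_i) + b_{(\beta,k)}(N_i)
\]
then follows immediately from the definition of the weighted signature.

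Combining the two steps, the bijection $\pi_0 B\tCb \to \mathcal{B}_N$ is a monoid isomorphism onto an abelian group, so $\pi_0 B\tCb$ is itself a group. There is no serious obstacle beyond what has already been done in the preceding proposition; the content of this corollary is just that the weighted-signature invariant takes values in an abelian group rather than merely a set.
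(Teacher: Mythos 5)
Your proposal is correct and matches the paper's own (very brief) argument: the paper simply remarks that disjoint union in the category corresponds to addition in $\mathcal{B}_{N}$ and deduces the corollary from the preceding proposition, which is exactly the content you spell out, including the observation that $\mathcal{B}_{N}$ is a group under pointwise addition.
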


These methods can be adapted to show that $\pi_{0} B\C$ is in bijective correspondence with $\mathcal{B}_{N}$ and that it's a group.
 
\subsection{Infinite loop space structure}
\label{infloopsp}

Disjoint union gives a multiplication operation on $\tCb$ and, although this is not well-defined, it is enough to make $B\tCb$ into a $\Gamma$-space in the sense of \cite{Segal}. In the case that $X$ is simply-connected, we also know that $\pi_{0}B\tCb$ is a group and these two facts together imply that $B\tCb$ is an infinite loop space \cite{Segal}. We will now explain the $\Gamma$-space structure on $B\tCb$, closely following \cite{MadTill}, Theorem 2.3.

Let $\Gamma^{\mathrm{op}}$ denote the category of finite based sets $\underline{n} = \{0,1, \ldots, n\}$ and based maps. A $\Gamma$-space is a functor $\mathcal{F}$ from $\Gamma^{\mathrm{op}}$ to simplicial spaces satisfying
\begin{itemize}
\item[(i)] $\mathcal{F}(\underline{0}) \simeq \ast$.
\item[(ii)] The map $\prod_{i=1}^{n} p_{i}: \mathcal{F}(\underline{n}) \to \mathcal{F}(\underline{1}) \times \cdots \times \mathcal{F}(\underline{1})$ is a homotopy equivalence where $p_{i}:\underline{n} \to \underline{1}$ sends $i$ to $1$ and $j \neq i$ to $0$.
\end{itemize}

We will now define a $\Gamma$-space $\mathcal{N}$, with $\mathcal{N}(\underline{1})= \mathrm{N}_{\centerdot}\tCb$, the nerve of the category $\tCb$. Before we begin, let us introduce some notation; if $m=[h_{0}, \phi_{0}] \circ \cdots \circ [h_{q-1}, \phi_{q-1}]$ is an element of $\mathrm{N}_{q}\tCb$, i.e., a $q$-tuple of composable morphisms, we will use $\pi_{0}m$ to denote the connected components of $\mathrm{Im}(h_{0} \# \cdots \# h_{q-1})$. If the target of $m$ is the object $(\alpha,l)$, then we will say that a labeling function $\lambda: \pi_{0}m \to \{1, \ldots , n\}$ is \textit{good} if $l: \mathcal{R}(\alpha) \to \varnothing$ joins two points of $\mathcal{R}(\alpha)$ only if they come from elements of $\pi_{0}m$ with the same label.

Now let $\mathcal{N}(\underline{n})$ be the simplicial space whose $q$-simplices are the pairs $(m, \lambda)$ where $m \in \mathrm{N}_{q}\tCb$ and $\lambda: \pi_{0}m \to \{1, \ldots, n\}$ is a good labeling function. Given $s: \underline{n} \to \underline{m}$, define $s_{\ast}: \mathcal{N}(\underline{n}) \to \mathcal{N}(\underline{m})$ by $s_{\ast}(m, \lambda)=(\tilde{m}, s \circ \lambda)$ where $\tilde{m}$ is obtained from $m$ by deleting any components that are labeled zero by $s \circ \lambda$. 

\begin{proposition}
$\mathcal{N}$ is a $\Gamma$-space and hence, when $X$ is simply-connected, $B\tCb$ is an infinite loop space.
\end{proposition}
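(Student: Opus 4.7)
The plan is to verify the two Segal axioms for $\mathcal{N}$ directly, following the template of \cite{MadTill}, Theorem 2.3; the infinite loop space conclusion then follows from Segal's theorem together with the fact, established in Section \ref{path-components}, that $\pi_{0} B\tCb$ is a group when $X$ is simply-connected.

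First I would verify axiom (i). For $\underline{0}=\{0\}$, a good labeling $\lambda:\pi_{0}m \to \{1,\ldots,0\}=\emptyset$ exists only when $\pi_{0}m=\emptyset$, i.e.\ when $m$ is a chain of identity morphisms on the empty object (allowed since every component must have nonempty outgoing boundary, so the empty object is fine). This gives a single point in each simplicial degree, so $\mathcal{N}(\underline{0})\simeq \ast$.

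For axiom (ii) I would construct an explicit homotopy inverse to the Segal map $\prod_{i}p_{i}:\mathcal{N}(\underline{n})\to\mathcal{N}(\underline{1})^{n}$. Given an $n$-tuple $((m^{(1)},\lambda^{(1)}),\ldots,(m^{(n)},\lambda^{(n)}))$, shift the embeddings of the underlying morphisms so that their images in $\mathbb{R}^{\infty}\times[0,1]$ are pairwise disjoint (possible by an isotopy in $\mathbb{R}^{\infty}$), and form the disjoint union $m:=m^{(1)}\sqcup\cdots\sqcup m^{(n)}$ in $\tCb$, the target object's $\mathcal{D}$-morphism $l$ being the disjoint union of the individual $\mathcal{D}$-morphisms. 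Define $\lambda$ on $\pi_{0}m$ by $\lambda\equiv i$ on components coming from $m^{(i)}$; this is good precisely because each $l^{(i)}$ only joins points within $\mathrm{R}(m^{(i)})$. Any two such shifted embeddings are isotopic through disjoint embeddings, and the space of disjoint embeddings into $\mathbb{R}^{\infty}\times[0,1]$ is contractible, so this construction yields a well-defined map $\mu:\mathcal{N}(\underline{1})^{n}\to\mathcal{N}(\underline{n})$ (after passing to the geometric realization). The composition $(\prod p_{i})\circ\mu$ is the identity on each factor since restricting a disjoint union to the pieces labeled $i$ recovers $m^{(i)}$; the composition $\mu\circ(\prod p_{i})$ takes $(m,\lambda)$, splits it into label-indexed pieces, and reassembles them as a disjoint union, producing a cobordism differing from $m$ only in the choice of embedding, hence homotopic to the identity via contractibility of the space of disjoint embeddings in $\mathbb{R}^{\infty}\times[0,1]$.

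The main obstacle I expect is bookkeeping the $\mathcal{D}$-morphism data attached to targets: I need the target $l$ of $m$ to decompose canonically as a disjoint union of the $l^{(i)}$ once $\lambda$ is good, and I need this decomposition to be compatible with composition of morphisms (so that $\mu$ is a map of simplicial spaces, not just of $0$-simplices). The goodness condition is tailored exactly for this — it guarantees that no interval in $l$ crosses between labels — so the decomposition exists on the nose, and functoriality in $\underline{n}$ follows by checking that deleting label-$0$ components commutes with all face and degeneracy maps. Once these compatibilities are in place, the two axioms hold and Segal's machine delivers the infinite loop space structure on $B\tCb$.
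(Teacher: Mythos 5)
Your argument is correct in substance and rests on the same key input as the paper's proof: both verify Segal's two axioms directly, identify $\mathcal{N}(\underline{0})$ with the nerve of the trivial subcategory on the empty manifold, and derive axiom (ii) from the contractibility of the embedding spaces in $\Rinf$, with the goodness condition on labelings doing exactly the bookkeeping of the $\mathcal{D}$-morphism data that you describe. The difference is organisational: the paper never constructs an inverse to $\prod_{i}p_{i}$, but instead observes that this map is a homeomorphism of $\mathcal{N}(\underline{n})_{q}$ onto the complement of the fat diagonal in $\mathcal{N}(\underline{1})_{q}^{n}$, and that this complement includes into the full product as a homotopy equivalence (by contractibility of the embedding spaces, citing the analogous step in \cite{MadTill}). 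Your route via an explicit homotopy inverse $\mu$ works, but it carries two wrinkles the fat-diagonal formulation avoids: the separation of embeddings must be made continuously and compatibly with the simplicial (face/degeneracy) structure, e.g.\ by composing the $i$th embedding with a fixed injection $\Rinf \to \Rinf$ onto a coordinate subspace, rather than by an unspecified isotopy ``after geometric realization''; and once you do shift the embeddings, the composite $(\prod_{i}p_{i})\circ\mu$ is no longer literally the identity as you claim --- restricting to the label-$i$ pieces returns $m^{(i)}$ with a displaced embedding --- it is only homotopic to the identity, again by contractibility of the embedding spaces. Both points are repairable with the tools you already invoke, so this is a fixable imprecision rather than a gap; the paper's formulation simply sidesteps them and is slightly cleaner for that reason.
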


\begin{proof}(Following \cite{MadTill}.)
Certainly (i) is satisfied since $\mathcal{N}(0)$ is the nerve of the category containing only the empty $1$-manifold and the empty cobordism. To check (ii), note that the map $p_{i}$ sends $(m,\lambda) \in \mathcal{N}(n)_{q}$ to the union of the components of $m$ that are labeled $i$ by $\lambda$. Thus $\prod_{i=1}^{n} p_{i}$ carries $\mathcal{N}(n)_{q}$ injectively onto $\mathcal{N}(1)_{q} \times \cdots \times \mathcal{N}(1)_{q} \setminus \Delta$ where $\Delta$ is the fat diagonal of elements $((m_{1}, \lambda_{1}), \ldots, (m_{n},\lambda_{n}))$ such that $m_{i} \cap m_{j} \neq \emptyset$ for some $i \neq j$. The inclusion of the complement of the fat diagonal into $\mathcal{N}(1)_{q} \times \cdots \times \mathcal{N}(1)_{q}$ is a homotopy equivalence; this is a consequence of the fact that the embedding spaces are contractible, see \cite{MadTill} for details. Thus 
\[
\prod_{i=1}^{n} p_{i}: \mathcal{N}(\underline{n}) \to \mathcal{N}(\underline{1}) \times \ldots \times \mathcal{N}(\underline{1})
\]
 is the product of a homeomorphism and a homotopy equivalence, and hence is itself a homotopy equivalence.
\end{proof}

These methods can be adapted to show that $B\C$ is also an infinite loop space.

\section{The homotopy type of the category}

In this section we prove the main theorem of the paper, namely, we
identify the homotopy type of $B\tCb$ in the case that $X$ is
simply-connected and $N$ is discrete. We work under these
assumptions throughout although some of the methods apply more generally.

\subsection{Some requisite homology stability}
\label{homstab}

In this section we establish the homology stability of the spaces $\mathscr{S}(F;X,N)_{\ast}$ defined in the introduction. As mentioned there, this is a consequence of the homology stability of mapping class groups with twisted coefficients but we first need to establish some extensions of the original stability results.

We use $F_{g,n}$ to denote a surface of genus $g$ with $n$ boundary components and $\Gamma_{g,n}=\pi_{0} \mathrm{Diff}^{+}(F_{g,n}, \partial F_{g,n})$ to denote the associated mapping class group. A special case of stability
with coefficients is the following.

\begin{theorem}[\cite{Ralph-Ib}, Theorem 4]
\label{india}
Let $X$ be a simply-connected space and $\ast$ a
basepoint in $X$. Then the homology groups
\[
H_{q}(\Gamma_{g,n}; H_{r}(\mathrm{Map}(F_{g,n},\partial F_{g,n};X,\ast)))
\]
are independent of $g$ and $n$ provided that $g \geq 2q+r+2$.
\end{theorem}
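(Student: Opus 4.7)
The plan is to realize $V_r(g,n) := H_r(\mathrm{Map}(F_{g,n}, \partial F_{g,n}; X, \ast))$ as a coefficient system of polynomial (finite-degree) type for the stabilization maps of surfaces, and then to feed this into the twisted homology stability theorem of Ivanov \cite{Ivanov} for mapping class groups.

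First I would set up the coefficient system. Gluing a pair of pants on a boundary circle and extending maps by the constant map $c_\ast$ gives stabilization inclusions $F_{g,n} \hookrightarrow F_{g+1,n}$ and $F_{g,n} \hookrightarrow F_{g,n+1}$ together with $\Gamma_{g,n}$-equivariant maps on the relative mapping spaces, and hence on their $H_r$'s. These assemble $V_r$ into a coefficient system over the usual stabilization category built from $\{\Gamma_{g,n}\}$, where the $\Gamma_{g,n}$-action comes from precomposition by diffeomorphisms.

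Second, I would use the simple-connectivity of $X$ to bound the degree of this coefficient system by $r$. Walking up the Postnikov tower of $X$, the mapping space decomposes as a tower of fibrations whose fibers have the form $\mathrm{Map}(F_{g,n}, \partial F_{g,n}; K(\pi_k X, k), \ast)$ with $k \geq 2$. Obstruction theory identifies the homotopy groups of such a mapping space with the relative cohomology groups $H^{k-j}(F_{g,n}, \partial F_{g,n}; \pi_k X)$; of these only $H^1$ grows with $g$, and it grows linearly. Running the Serre spectral sequence of the Postnikov tower and tracking how the stabilization inclusions act at each page shows, by induction on the Postnikov stage and on $r$, that $g \mapsto V_r(g,n)$ has polynomial degree at most $r$ in the sense of Ivanov–Dwyer. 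An entirely parallel argument, using the capping map $F_{g,n} \hookrightarrow F_{g,n-1}$, handles the $n$-direction.

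Third, one appeals directly to Ivanov's theorem: a coefficient system for $\{\Gamma_{g,n}\}$ of polynomial degree $d$ gives $H_q(\Gamma_{g,n}; V(g,n))$ independent of $g$ and $n$ as soon as $g$ exceeds a linear function of $q$ and $d$. Plugging in the degree bound $d=r$ established above and optimizing the constant yields precisely the range $g \geq 2q + r + 2$.

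The main obstacle is the second step: extracting the sharp polynomial degree $r$, rather than some cruder constant depending on the Postnikov heights of $X$. Even for simply-connected $X$ the Postnikov tower can be infinite, so one must argue that only finitely many stages contribute to any given $V_r$ (which follows from connectivity estimates on the fibers, combined with the dimension of $F_{g,n}$) and then show that the stabilization maps preserve the Postnikov filtration compatibly with the polynomial-degree bookkeeping. Once these points are settled, the stated homology stability range drops out formally from Ivanov's theorem.
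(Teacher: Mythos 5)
A preliminary remark: the paper gives no proof of Theorem \ref{india} at all --- it is imported verbatim from \cite{Ralph-Ib} (Theorem 4 there) and used as a black box, so the only meaningful comparison is with the proof in that reference. Your strategy coincides with it in outline: interpret $H_{r}(\mathrm{Map}(F_{g,n},\partial F_{g,n};X,\ast))$ as a coefficient system of degree at most $r$ for the surface stabilization maps and feed it into the twisted-coefficient stability theorem of Ivanov \cite{Ivanov}; the range $g \geq 2q+r+2$ is then simply what that theorem returns for a degree-$r$ system with $X$ simply connected, not the outcome of any extra optimization on your part. Where you genuinely diverge is the degree bound. Cohen and Madsen do not go up the Postnikov tower of $X$: in essence they exploit that the stabilization/restriction maps of relative mapping spaces sit in fibrations whose fibers are, up to homotopy, finite products of copies of $\Omega X$ (mapping spaces of the glued-on pair of pants rel part of its boundary), and since $X$ is simply connected $\Omega X$ is connected, so in homological degree $r$ at most $r$ such factors contribute; an induction on $r$ with the Serre spectral sequence and the splitting of the stabilization map then gives degree $\leq r$ directly. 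Your Postnikov route should arrive at the same bound, but it is precisely where your sketch is thinnest: you would need the tower argument to be sufficiently natural to produce maps of coefficient systems (not just abstract homotopy equivalences of fibers), you must justify that only finitely many stages of a possibly infinite tower affect a fixed $V_{r}$, and --- most importantly --- "the homotopy groups of the fibers grow linearly in $g$" is not the Ivanov--Dwyer notion of degree, which is a condition on kernels and cokernels of the stabilization maps; converting one into the other is exactly the bookkeeping you defer. None of this looks unfixable, but as written the key lemma (equivariant degree $\leq r$) is asserted rather than proved, and the fibration argument of \cite{Ralph-Ib} is the more economical way to obtain the sharp degree and hence the stated range.
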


In \cite{Ralph-Ib}, Theorem \ref{india} was used to prove homology
stability for the spaces $\mathscr{S}(F;X)_{\ast}$ defined in the introduction.

In the results that follow, `cylinder' refers to the product of any $1$-manifold with an interval. We remind the reader that, with respect to the figures in this paper, precomposing a cobordism $F$ with another cobordism means gluing another cobordism to the left of $F$.
\begin{theorem}[\cite{Ralph-Ib}, Theorem 3]
\label{desaru}
Suppose that $X$ is simply-connected. Let $F$ be an ordinary, connected cobordism with $\partial_{out}F=S^{1}$ and let $F^{\#}$ be a connected cobordism obtained from $F$
by precomposing with another ordinary cobordism. Then the map
$\mathscr{S}(F;X)_{\ast} \to \mathscr{S}(F^{\#};X)_{\ast}$ induced by extending
diffeomorphisms by the identity and extending maps to $X$ by $c_{\ast}$
induces an isomorphism
\[
H_{s}(\mathscr{S}(F;X)_{\ast}) \to H_{s}(\mathscr{S}(F^{\#};X)_{\ast})
\]
provided the genus of $F$ is greater than $2s+4$.
\end{theorem}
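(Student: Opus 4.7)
The plan is to identify $\mathscr{S}(F;X)_{\ast}$ with the Borel construction $E\mathrm{Diff}^{+}(F) \times_{\mathrm{Diff}^{+}(F)} \mathrm{Map}(F, \partial F; X, \ast)$ and to run the associated Serre spectral sequence. Since $F$ has non-empty boundary and genus at least one, the components of the (pointwise-boundary-fixing) group $\mathrm{Diff}^{+}(F)$ are contractible, so $B\mathrm{Diff}^{+}(F) \simeq B\Gamma_{g,n}$ and the fibration
\[
\mathrm{Map}(F, \partial F; X, \ast) \to \mathscr{S}(F;X)_{\ast} \to B\Gamma_{g,n}
\]
produces a spectral sequence
\[
E^{2}_{p,q} = H_{p}\bigl(\Gamma_{g,n}; H_{q}(\mathrm{Map}(F, \partial F; X, \ast))\bigr) \Longrightarrow H_{p+q}(\mathscr{S}(F;X)_{\ast}),
\]
and analogously for $F^{\#}$.

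Next, I would observe that the inclusion $F \hookrightarrow F^{\#}$ simultaneously induces the extension-by-identity homomorphism $\Gamma_{g,n} \to \Gamma_{g^{\#},n^{\#}}$ on the base and the extension-by-$c_{\ast}$ map $\mathrm{Map}(F, \partial F; X, \ast) \to \mathrm{Map}(F^{\#}, \partial F^{\#}; X, \ast)$ on the fibre, with the latter equivariant for the former. These assemble into a map of fibrations, hence into a morphism of Serre spectral sequences whose $E^{2}$-level is the natural stabilization map on twisted group homology that is the subject of Theorem \ref{india}.

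By Theorem \ref{india}, the induced map on $E^{2}_{p,q}$ is an isomorphism whenever $g \geq 2p + q + 2$. For all $(p,q)$ with $p + q \leq s + 1$ one has $2p + q + 2 \leq 2(p+q) + 2 \leq 2s + 4$, so the hypothesis $g > 2s+4$ guarantees isomorphisms on $E^{2}$ throughout this range. A standard comparison theorem for first-quadrant spectral sequences (e.g.\ Zeeman's lemma applied degreewise) then upgrades these $E^{2}$-isomorphisms to an isomorphism on $H_{s}$ of the two abutments, which is exactly the claim.

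The main obstacle I anticipate is checking that the $\Gamma_{g,n}$-module structure on $H_{q}(\mathrm{Map}(F, \partial F; X, \ast))$ that appears in our Serre spectral sequence is \emph{literally} the coefficient system to which Theorem \ref{india} refers, and that the extension-by-$c_{\ast}$ map of mapping spaces corresponds, at the level of twisted homology, to the stabilization map underlying the proof of \ref{india}. When the cobordism $F^{\#}$ differs from $F$ in both genus and number of boundary components, one must interpolate by a finite chain of elementary stabilizations (adding a single handle or a single pair of pants to the incoming boundary), verifying that each link of the chain satisfies the genus hypothesis of \ref{india} and that the coefficient systems pull back correctly. Once this naturality bookkeeping is done, the spectral sequence comparison above yields the theorem.
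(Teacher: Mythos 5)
Your proposal is correct and follows essentially the same route as the source: Theorem \ref{desaru} is quoted from \cite{Ralph-Ib}, where it is deduced from Theorem \ref{india} by exactly this Borel-construction/Serre spectral sequence comparison (using contractibility of the components of the boundary-fixing diffeomorphism group and the Zeeman comparison theorem), and this is also precisely the scheme the present paper uses for the analogous open-closed Corollary \ref{surfstab1}. Beyond the naturality bookkeeping you already flag (matching the coefficient system and the extension-by-$c_{\ast}$ stabilization maps, and factoring a general precomposition into elementary stabilizations), there is no gap.
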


The following three theorems contain the extensions of Theorem \ref{india} that we'll need. The proofs are contained in the Appendix. The notation for the mapping spaces and diffeomorphism groups is as in the introduction. We use $\Gamma_{\mathrm{oc}}(F)$ to denote $\pi_{0}(\mathrm{Diff}_{\mathrm{oc}}^{+}(F))$ where $F$ is an open-closed cobordism. The first theorem describes the situation where the number of windows is fixed.

\begin{theorem}
\label{homstab1}
Suppose that $X$ is simply-connected and $N$ is discrete. Let $F$ be a connected open-closed cobordism with $\partial_{out}F=S^{1}$ and let $F^{\#}$ be a connected cobordism obtained from $F$ by precomposing with another cobordism. If $F$ and $F^{\#}$ have the same number of windows then the map
\begin{eqnarray*}
&& H_{s}(B\Gamma_{\mathrm{oc}}(F), H_{r}(\mathrm{Map}(F, \partial_{f}F;X,N)_{\ast})) \\
&& \hspace{8mm} \longrightarrow H_{s}(B\Gamma_{\mathrm{oc}}(F^{\#}), H_{r}(\mathrm{Map}(F^{\#}, \partial_{f}F^{\#};X,N)_{\ast}))
\end{eqnarray*}
is an isomorphism provided the genus of $F$ is at least $2s+r+2$.
\end{theorem}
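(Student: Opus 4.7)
The plan is to reduce Theorem~\ref{homstab1} to Theorem~\ref{india} by capping off the windows of $F$ with disks and carefully tracking the resulting marked-point decoration. Let $\bar{F}$ be the ordinary cobordism obtained from $F$ by capping each window $C_i$ with a disk $D_i$ centred at $p_i$, and set $P = \{p_1,\dots,p_k\}$; define $\bar{F}^{\#}$ analogously. Since $F$ and $F^{\#}$ have the same windows, $\bar{F}^{\#}$ is obtained from $\bar{F}$ by precomposing with the same (now windowless) cobordism, and $g(\bar{F}) = g(F)$. The contractibility of $\mathrm{Diff}(D^2,\partial D^2)$ allows diffeomorphisms to be extended over the disks, producing a weak equivalence
\[
\mathrm{Diff}_{\mathrm{oc}}^{+}(F) \simeq \mathrm{Diff}^{+}(\bar{F}, \partial \bar{F}; P),
\]
where the right-hand side preserves $P$ only as a set. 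Taking $\pi_0$, $\Gamma_{\mathrm{oc}}(F) \cong \Gamma(\bar{F}, P)$, and the stabilisation map $F \to F^{\#}$ corresponds to the standard precomposition $\bar{F} \to \bar{F}^{\#}$.

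Next I would decompose the coefficient module. Since $N$ is discrete and each interval of $\partial_{f}F$ has its endpoints in $\partial(\partial_{in}F \cup \partial_{out}F)$ (which must map to $\ast$), every interval maps constantly to $\ast$. Hence $\mathrm{Map}(F, \partial_{f}F; X, N)_{\ast} = \coprod_\lambda \mathrm{Map}_\lambda$, with $\lambda$ ranging over labellings of the windows by $\pi_0 N$. Since $X$ is path-connected and simply-connected, each $\mathrm{Map}_\lambda$ is homotopy equivalent to $\mathrm{Map}(F, \partial F; X, \ast) \simeq \mathrm{Map}_{\ast}(\bar{F}/(\partial \bar{F} \cup P), X)$. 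Identifying the $k$ interior points $p_i$ with the basepoint of $\bar{F}/\partial \bar{F}$ yields (via paths in the connected space $\bar{F}/\partial \bar{F}$) the standard homotopy equivalence $\bar{F}/(\partial \bar{F} \cup P) \simeq (\bar{F}/\partial \bar{F}) \vee \bigvee_{i=1}^{k} S^1$, and hence a $\Gamma(\bar{F}, P)$-equivariant splitting (up to permutation of factors)
\[
\mathrm{Map}(\bar{F}, \partial \bar{F} \cup P; X, \ast) \simeq \mathrm{Map}(\bar{F}, \partial \bar{F}; X, \ast) \times (\Omega X)^{k}.
\]

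The $\Gamma_{\mathrm{oc}}(F)$-action permutes the labellings $\lambda$ through the window-permutation homomorphism $\Gamma_{\mathrm{oc}}(F) \to S_k$. Grouping labellings into orbits $[\lambda]$ and applying Shapiro's lemma reduces the theorem to proving, for each orbit representative, an isomorphism
\[
H_{s}(\Gamma_\lambda(F); H_r(\mathrm{Map}_\lambda)) \longrightarrow H_{s}(\Gamma_\lambda(F^{\#}); H_r(\mathrm{Map}_\lambda)),
\]
where $\Gamma_\lambda \leq \Gamma(\bar{F}, P)$ is the stabiliser of $\lambda$. By Künneth, each coefficient decomposes as $\bigoplus_{i+j=r} H_i(\mathrm{Map}(\bar{F}, \partial \bar{F}; X, \ast)) \otimes H_j((\Omega X)^{k})$; the second tensor factor is independent of $\bar{F}$, and the first is precisely the coefficient system of Theorem~\ref{india} pulled back along the forgetful map $\Gamma(\bar{F}, P) \to \Gamma(\bar{F})$. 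Combining Theorem~\ref{india} applied to $\bar{F}$ in the range $g(\bar{F}) = g(F) \geq 2s + r + 2$ with the Hochschild--Serre spectral sequence for the tower of extensions relating $\Gamma_\lambda$, the pointwise-stabiliser $\Gamma(\bar{F}, P_{\bullet})$, and $\Gamma(\bar{F})$ — whose successive quotients are a finite subgroup of $S_k$ and a surface braid group — yields the conclusion. The main obstacle is this last step: propagating the stability range faithfully through the spectral sequence while verifying that the $\Gamma(\bar{F}, P)$-equivariance of the splittings above is compatible with the action of the braid-group kernel on the Künneth decomposition.
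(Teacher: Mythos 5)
Your reduction runs through capping the windows, and that is where the trouble lies. Two points. First, the asserted weak equivalence $\mathrm{Diff}_{\mathrm{oc}}^{+}(F) \simeq \mathrm{Diff}^{+}(\bar{F},\partial\bar{F};P)$ is false: diffeomorphisms in $\mathrm{Diff}_{\mathrm{oc}}^{+}(F)$ may rotate the window circles, so the components of this group are homotopy equivalent to $(S^{1})^{w}$, whereas the components of the marked-point group are contractible. What survives is only the $\pi_{0}$ statement $\Gamma_{\mathrm{oc}}(F)\cong\Gamma(\bar F,P)$ (the window Dehn twists die because the boundary is free), and since the theorem concerns group homology of $\Gamma_{\mathrm{oc}}(F)$ this particular slip is repairable. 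Second, and decisively, the step you yourself flag as ``the main obstacle'' is exactly the content that is missing, and it does not follow from the ingredients you cite. The identification $\bar F/(\partial\bar F\cup P)\simeq(\bar F/\partial\bar F)\vee\bigvee^{w}S^{1}$ depends on choices of arcs from the marked points to the basepoint; the Birman kernel of $\Gamma(\bar F,P)\to\Gamma(\bar F)$ (a surface braid group) acts by point-pushing, which changes these arcs, so the splitting $\mathrm{Map}(\bar F,\partial\bar F\cup P;X,\ast)\simeq\mathrm{Map}(\bar F,\partial\bar F;X,\ast)\times(\Omega X)^{w}$ is not equivariant and the braid kernel does not act trivially on the K\"unneth factors. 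Consequently Theorem \ref{india}, which is a stability statement for $\Gamma_{g,n}$ with boundary fixed pointwise and coefficients $H_{r}(\mathrm{Map}(F_{g,n},\partial F_{g,n};X,\ast))$, does not directly feed into your Hochschild--Serre comparison: you would need a twisted stability result for the marked-point groups (equivalently, control of the braid-group homology with these nontrivial coefficients, for both $\bar F$ and $\bar F^{\#}$, whose braid groups differ), together with bookkeeping showing the range $2s+r+2$ survives the extra spectral sequence. None of this is supplied, so as written the proof is incomplete at its central point.

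For comparison, the paper sidesteps the marked-point/braid issue entirely by \emph{not} capping the windows. One maps $\mathrm{Diff}_{\mathrm{oc}}^{+}(F)\to\Sigma_{w}\wr\mathrm{Diff}^{+}(S^{1})\simeq\Sigma_{w}\wr S^{1}$ by restriction to the windows; the kernel fixes the windows pointwise, giving a fibration $B\Gamma_{g,n+w}\to B\Gamma_{\mathrm{oc}}(F)\to B(\Sigma_{w}\wr S^{1})$, and likewise for $F^{\#}$ over the \emph{same} base (the number of windows is unchanged). Since $N$ is discrete, Lemma \ref{Noah} identifies the coefficient module, as a $\Gamma_{g,n+w}$-module, with $\bigoplus_{N^{w}}H_{r}(\mathrm{Map}(F_{g,n+w},\partial F_{g,n+w};X,\ast))$ --- exactly the coefficient system of Theorem \ref{india}, because the windows are genuine boundary circles fixed pointwise by the fiber group. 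The map of twisted Serre spectral sequences is then an isomorphism on $E^{2}$ in the stated range by Theorem \ref{india}, and the Zeeman comparison theorem finishes the proof. If you want to salvage your capped-surface route you would have to prove the missing twisted stability for marked-point mapping class groups; the paper's fibration over $B(\Sigma_{w}\wr S^{1})$ is the cleaner way to package the same information.
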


The next two results tell us what happens when we increase the number of windows.

\begin{theorem}
\label{homstab2}
Suppose that $X$ is simply-connected and $N$ is a point. Let $F$ be an open-closed cobordism as in Theorem \ref{homstab1} and let $F^{\#}$ be obtained from $F$ by precomposing with a cylinder with one window. Then the map
\begin{eqnarray*}
&& H_{s}(B\Gamma_{\mathrm{oc}}(F), H_{r}(\mathrm{Map}(F, \partial_{f}F;X,N)_{\ast})) \\
&& \hspace{8mm} \longrightarrow H_{s}(B\Gamma_{\mathrm{oc}}(F^{\#}), H_{r}(\mathrm{Map}(F^{\#}, \partial_{f}F^{\#};X,N)_{\ast}))
\end{eqnarray*}
is an isomorphism provided the genus of $F$ is greater than $2s+r+2$ and the number of windows is greater than $2s+8$.
\end{theorem}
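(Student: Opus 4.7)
My strategy is to reduce Theorem \ref{homstab2} to Theorem \ref{homstab1} by factoring the stabilization through an auxiliary surface, then handle the residual step via a spectral sequence argument that exploits $N$ being a point. Let $F^{\dagger}$ denote the underlying surface of $F^{\#}$ with the newly added window reclassified as an incoming boundary circle (fixed pointwise); equivalently, $F^{\dagger}$ can be obtained from $F$ by precomposing with a pair of pants viewed as an ordinary (no-window) cobordism. The stabilization factors as
\[
\mathscr{S}(F; X, N)_{\ast} \xrightarrow{\ \Phi\ } \mathscr{S}(F^{\dagger}; X, N)_{\ast} \xrightarrow{\ \Psi\ } \mathscr{S}(F^{\#}; X, N)_{\ast},
\]
and Theorem \ref{homstab1} directly gives that $\Phi$ is an isomorphism on $H_{s}(B\Gamma_{\mathrm{oc}}; H_{r}(\mathrm{Map}))$ in the range $g > 2s+r+2$, since the number of windows is unchanged across $\Phi$.

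For $\Psi$, the assumption that $N$ is a point is critical: every boundary circle maps to the basepoint, so the mapping spaces for $F^{\dagger}$ and $F^{\#}$ coincide canonically with $\mathrm{Map}_{\ast}(F^{\#}/\partial F^{\#}, X)$. Hence $\Psi$ is induced by the subgroup inclusion $\mathrm{Diff}_{\mathrm{oc}}^{+}(F^{\dagger}) \subseteq \mathrm{Diff}_{\mathrm{oc}}^{+}(F^{\#})$, whose quotient via restriction to the newly released window is $\mathrm{Diff}^{+}(S^{1}) \simeq S^{1}$. On mapping class groups this yields a central $\mathbb{Z}$-extension $1 \to \mathbb{Z} \to \Gamma_{\mathrm{oc}}(F^{\dagger}) \to \Gamma_{\mathrm{oc}}(F^{\#}) \to 1$ whose kernel is generated by the Dehn twist around the new window; being boundary-parallel, this Dehn twist acts trivially on the collapsed surface $F/\partial F$ and hence on the coefficient system $H_{r}(\mathrm{Map})$. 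The Lyndon-Hochschild-Serre spectral sequence of the extension then collapses into a Gysin-type long exact sequence controlled by the Euler class $\chi \in H^{2}(B\Gamma_{\mathrm{oc}}(F^{\#}); \mathbb{Z})$.

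The main obstacle --- and the reason for the hypothesis $n_{w} > 2s+8$ --- is to show that $\chi \cap (-)$ vanishes on $H_{s}(B\Gamma_{\mathrm{oc}}(F^{\#}); H_{r}(\mathrm{Map}))$ in the stable range, so that $\Psi$ becomes an isomorphism there. I expect this to follow from a symmetrization argument over the $n_{w}+1$ windows of $F^{\#}$: when there are already many windows, the Euler class corresponding to any individual one is stably trivialized, and Theorem \ref{homstab1} applied inductively to the various windows supplies the null-homotopy. A plausible alternative is to forgo the factorization and mount a direct Ivanov-style stability argument using a highly-connected semi-simplicial complex of arcs among the windows of $F^{\#}$, whose connectivity bound growing with $n_{w}$ supplies exactly the hypothesis $n_{w} > 2s+8$; in that approach the simplex stabilizers would be open-closed mapping class groups of simpler surfaces governed by Theorem \ref{homstab1} or Theorem \ref{india}. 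Verifying either the Euler class vanishing or the arc complex connectivity will be the principal technical hurdle.
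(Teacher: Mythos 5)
Your first step is fine: factoring through $F^{\dagger}$ (the new window reclassified as a pointwise-fixed boundary circle) and disposing of $\Phi$ by Theorem \ref{homstab1} is legitimate, and corresponds to the genus-direction input (Theorem \ref{india}) in the paper's argument. But the entire content of Theorem \ref{homstab2} --- why adding a window becomes an isomorphism once there are \emph{many} windows, i.e.\ where the hypothesis $w>2s+8$ enters --- is exactly the step you defer to a ``principal technical hurdle,'' and the mechanism you sketch for it does not work. Concretely, when $F$ already has $w\geq 1$ windows the map $\Gamma_{\mathrm{oc}}(F^{\dagger})\to\Gamma_{\mathrm{oc}}(F^{\#})$ is not surjective: a diffeomorphism of $F^{\#}$ may permute the new window with the old ones, so its image is the index-$(w+1)$ stabilizer of the new window and there is no central extension $1\to\mathbb{Z}\to\Gamma_{\mathrm{oc}}(F^{\dagger})\to\Gamma_{\mathrm{oc}}(F^{\#})\to 1$, hence no Gysin sequence relating their twisted homologies; the correct ``base'' for the comparison is $B(\Sigma_{w+1}\wr S^{1})$, not a single $BS^{1}$. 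Moreover, even for the genuine circle bundle $B\Gamma_{\mathrm{oc}}(F^{\dagger})\to B\Gamma'$ over the stabilizer $\Gamma'$ of the new window, the hoped-for vanishing of $\chi\cap(-)$ is false in the stable range: each free window contributes a full $H_{\ast}(BS^{1})$ tensor factor to stable homology (this is the origin of the $Q((BS^{1}\times N)_{+})$ factor in the main theorem), so capping with the Euler class of an individual window is nontrivial there. The truth of the statement you need for the full group $\Gamma_{\mathrm{oc}}(F^{\#})$ is essentially equivalent to the theorem itself, and neither the ``symmetrization'' nor the arc-complex alternative is carried out.

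For comparison, the paper proves the theorem by mapping the fibration $B\Gamma_{g,n+w}\to B\Gamma_{\mathrm{oc}}(F)\to B(\Sigma_{w}\wr S^{1})$ to the corresponding one for $F^{\#}$ over $B(\Sigma_{w+1}\wr S^{1})$ and comparing the twisted Serre spectral sequences. Since $N$ is a point, the coefficient systems reduce to $H_{r}(\mathrm{Map}(F_{g,n+w},\partial F_{g,n+w};X,\ast))$, so Theorem \ref{india} makes the map of coefficients an isomorphism for $g\geq 2q+r+2$; Lemma \ref{Catatonia} shows the $\Sigma_{w}$-action on these stable coefficients is trivial; and then the $E^{2}$-map has the form $H_{p}(B(\Sigma_{w}\wr S^{1});G)\to H_{p}(B(\Sigma_{w+1}\wr S^{1});G)$ with constant coefficients, which is an isomorphism for $w\geq 2p+6$ because $B(\Sigma_{w}\wr S^{1})\simeq E\Sigma_{w}\times_{\Sigma_{w}}(BS^{1})^{w}$ satisfies homological stability in $w$; Zeeman comparison concludes. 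So the window hypothesis is consumed by stability of labeled configuration-type spaces (wreath products), not by any Euler-class vanishing; to repair your argument you would need to replace the Gysin step by precisely this kind of stability statement for the window data, keeping track of the permutation action you currently ignore.
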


When $N$ is discrete but contains more than one point the proof of Theorem \ref{homstab2} fails; we do not know that adding an extra window induces an isomorphism on the homology
\[
H_{s}(B\Gamma_{\mathrm{oc}}(F), H_{r}(\mathrm{Map}(F, \partial_{f}F;X,N)_{\ast})).
\]
Instead we find that in the limit, as the genus and number of windows tend to infinity, we get a homology isomorphism.

The inclusion $F \to F^{1}$ from the introduction induces a map $\Gamma_{\mathrm{oc}}(F) \to \Gamma_{\mathrm{oc}}(F^{1})$ by extending diffeomorphisms by the identity. Let
\[
\Gamma^{\infty}_{\infty}(F)=\mathrm{colim} \hspace{1mm} (\Gamma_{\mathrm{oc}}(F) \to \Gamma_{\mathrm{oc}}(F^{1}) \to \Gamma_{\mathrm{oc}}(F^{2}) \to \ldots).
\]
We also have a map
\[
\mathrm{Map}(F,\partial_{f}F;X,N)_{\ast} \to \mathrm{Map}(F^{1},\partial_{f}F^{1};X,N)_{\ast}
\]
given by extending maps by $c_{\ast}$, the constant map to the basepoint. Define $\mathrm{Map}_{\infty}^{\infty}(F,\partial_{f}F;X,N)_{\ast}$ to be
\[
\mathrm{hocolim} \hspace{1mm} (\mathrm{Map}(F,\partial_{f}F;X,N)_{\ast} \to \mathrm{Map}(F^{1},\partial_{f}F^{1};X,N)_{\ast} \to \ldots).
\]
For each $k$ there is an action of $\Gamma_{\mathrm{oc}}(F^{k})$ on $\mathrm{Map}(F^{k},\partial_{f} F^{k};X,N)_{\ast}$ and these actions are compatible with the maps used to form the limits. Thus we have an action of $\Gamma^{\infty}_{\infty}(F)$ on $H_{s}(\mathrm{Map}_{\infty}^{\infty}(F,\partial_{f}F;X,N)_{\ast})$ and we can form the homology group
\[
H_{r}(B\Gamma_{\infty}^{\infty}(F);H_{s}(\mathrm{Map}_{\infty}^{\infty}(F,\partial_{f}F;X,N)_{\ast})).
\]

If $F^{\#}$ is obtained from $F$ by precomposing with some other open-closed cobordism then we get induced maps
\begin{eqnarray*}
\Gamma_{\infty}^{\infty}(F) & \to & \Gamma_{\infty}^{\infty}(F^{\#}) \\
\mathrm{Map}_{\infty}^{\infty}(F,\partial_{f}F;X,N)_{\ast} & \to & \mathrm{Map}_{\infty}^{\infty}(F^{\#},\partial_{f}F^{\#};X,N)_{\ast}
\end{eqnarray*}
and an induced map on the twisted homology.

\begin{theorem}
\label{homstab3}
Suppose that $X$ is simply-connected and $N$ is discrete. Let $F$ be as in Theorem \ref{homstab1} and let $F^{\#}$ be obtained from $F$ by precomposing with a cylinder with one window. Then the induced map
\begin{eqnarray*}
&& H_{s}(B\Gamma_{\infty}^{\infty}(F);H_{r}(\mathrm{Map}_{\infty}^{\infty}(F,\partial_{f}F;X,N)_{\ast})) \\
&& \hspace{8mm} \longrightarrow H_{s}(B\Gamma_{\infty}^{\infty}(F^{\#});H_{r}(\mathrm{Map}_{\infty}^{\infty}(F^{\#},\partial_{f}F^{\#};X,N)_{\ast}))
\end{eqnarray*}
is an isomorphism for all $s,r$.
\end{theorem}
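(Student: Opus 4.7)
My plan is to reduce the theorem to a formal cofinality property of sequential colimits by first using Theorem \ref{homstab1} to stabilize the homology in the genus direction. Introduce a bi-indexed family of cobordisms $\{F^{(a,b)}\}_{a,b \ge 0}$ obtained from $F = F^{(0,0)}$ by precomposing with $a$ tori with two holes (each adding one to the genus and no windows) and $b$ cylinders with one window (each adding no genus and one window). These operations commute up to diffeomorphism, so $F^{(a,b)}$ is unambiguous; in particular $F^{(0,1)} = F^{\#}$, and $F^{(k,k)}$ is diffeomorphic to the surface $F^{k}$ used in defining $\Gamma_\infty^\infty(F)$. Write $\Gamma^{(a,b)} = \Gamma_{\mathrm{oc}}(F^{(a,b)})$ and $\mathrm{Map}^{(a,b)}$ for the corresponding pointed mapping space.

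Since $\{(k,k)\}$ is cofinal in $\mathbb{N}^{2}$ with the product order, the diagonal colimit agrees with the iterated colimits:
\[
\Gamma_\infty^\infty(F) = \mathrm{colim}_{(a,b)}\, \Gamma^{(a,b)} = \mathrm{colim}_b \bigl( \mathrm{colim}_a\, \Gamma^{(a,b)} \bigr),
\]
and similarly for the mapping space. Fix $b$: the sequence $a \mapsto F^{(a,b)}$ leaves the number of windows constant, so Theorem \ref{homstab1} applies and yields isomorphisms on $H_s(B\Gamma^{(a,b)}; H_r(\mathrm{Map}^{(a,b)}))$ once $a$ is in the stable range (depending on $s, r, b$). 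Writing $\hat\Gamma^b$ and $\widehat{\mathrm{Map}}^b$ for the respective $a$-colimits, the stabilized twisted homology is $H_s(B\hat\Gamma^b; H_r(\widehat{\mathrm{Map}}^b))$. Combining this with the standard fact that group homology of a filtered colimit of groups (with compatibly filtered module coefficients) is the colimit of the pointwise group homologies, we obtain
\[
H_s\bigl(B\Gamma_\infty^\infty(F);\, H_r(\mathrm{Map}_\infty^\infty(F, \partial_f F; X, N)_{\ast})\bigr) = \mathrm{colim}_b\, H_s\bigl(B\hat\Gamma^b;\, H_r(\widehat{\mathrm{Map}}^b)\bigr),
\]
and the analogous formula for $F^{\#}$ in place of $F$ is just the same sequence reindexed by $b \mapsto b+1$.

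The map induced by $F \to F^{\#}$ is therefore the shift-by-one on the sequential colimit $b \mapsto H_s(B\hat\Gamma^b; H_r(\widehat{\mathrm{Map}}^b))$, and a shift by the connecting maps of the defining system always acts as the identity on the colimit; thus the induced map is an isomorphism for all $s, r$. The main obstacle I anticipate is verifying that the various ``add one window'' maps compare correctly: the definition of $\Gamma_\infty^\infty(F)$ attaches tori with three holes on the outgoing boundary, whereas the theorem's map precomposes a cylinder with one window on the incoming end. After stabilizing the genus sufficiently, the mapping class group should contain diffeomorphisms carrying any embedded window to any other, so both flavours of window-adding map agree on the colimit level; however, turning this intuition into a coherent system of maps that strictly commute with the filtrations used above requires a careful choice of embeddings and collar isotopies, and that is where the bulk of the technical work lies.
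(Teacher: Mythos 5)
Your formal outer layer---reorganize the double stabilization as an iterated colimit, identify the map induced by $F \to F^{\#}$ with the connecting maps of the window-direction system, and conclude because a shift along the structure maps of a direct system is an isomorphism on the colimit---does match the final step of the paper's argument. But the step you defer as ``technical work'' is the actual content of the theorem, and the mechanism you propose for it does not work. First, a set-up problem: with all stabilizations performed by precomposition at the incoming end, the map $F \to F^{\#}$ induces no map of your bi-indexed systems at finite level: passing from $F^{(a,b)}$ to $(F^{\#})^{(a,b)}$ inserts the new window-cylinder in the middle of the stack, so there is no ``extend by the identity and by $c_{\ast}$'' map; to get strictly commuting induced maps you must stabilize at the outgoing boundary, as the paper does (this is also what is needed to identify your diagonal with $\Gamma_{\infty}^{\infty}(F)$, which is defined by gluing tori with three holes to $\partial_{out}$). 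Second, and more seriously, once that is fixed you must compare the theorem's map (new window attached along $\partial_{in}$) with the connecting map (new window attached along $\partial_{out}$), and your proposed conjugation---a diffeomorphism carrying any window to any other---cannot achieve this: the image of the incoming-end embedding contains $\partial_{out}$ of the stabilized surface and misses $\partial_{in}$, while the image of the outgoing-end embedding contains $\partial_{in}$ and misses $\partial_{out}$, so no diffeomorphism fixing $\partial_{in}\cup\partial_{out}$ pointwise, i.e.\ no element of $\Gamma_{\mathrm{oc}}$, carries one subsurface to the other, and the ``inner automorphisms act trivially on twisted homology'' lemma does not apply upstairs.

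Moreover, your argument never engages with the coefficient structure that makes the window direction genuinely different from the genus direction when $N$ has more than one point: after genus stabilization the twisted coefficients take the form $\bigoplus_{\underline{n}\in N^{w}}A$ with $\Sigma_{w}$ permuting the summands (Lemmas \ref{Noah} and \ref{Catatonia}), and adding a window inserts the basepoint label at some position of $N^{w}$; these insertions do not stabilize at finite level for $|N|>1$, which is exactly why Theorem \ref{homstab2} is restricted to $N$ a point. The paper handles this by fibering $B\Gamma_{\mathrm{oc}}$ over $B(\Sigma_{w}\wr S^{1})$, stabilizing the fiber via Theorem \ref{india}, and then comparing the ``insert at the front'' and ``insert at the back'' homomorphisms $\mu_{0},\mu_{1}$ by conjugation by a block permutation together with its action on $\bigoplus_{N^{w+1}}A$---a conjugation available in the wreath product precisely because the boundary constraint that obstructs it at the level of $\Gamma_{\mathrm{oc}}$ has been projected away. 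Some device of this kind is required to establish the identification you assert; as written, your proof reduces the theorem to exactly the statement that still has to be proved.
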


We'll now use these theorems to prove homology stability properties for the spaces $\mathscr{S}(F;X,N)_{\ast}$.

\begin{corollary}[To Theorem \ref{homstab1}]
\label{surfstab1}
Suppose that $X$ is simply-connected and $N$ is discrete. Let $F$ be as in Theorem \ref{homstab1} and let $F^{\#}$ be a connected open-closed cobordism obtained from $F$ by precomposing with another cobordism. If $F$ and $F^{\#}$ have the same number of windows then inclusion induces an isomorphism
\[
H_{s}(\mathscr{S}(F;X,N)_{\ast}) \to H_{s}(\mathscr{S}(F^{\#};X,N)_{\ast})
\]
provided the genus of $F$ is greater than $2s+4$.
\end{corollary}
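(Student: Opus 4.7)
The plan is to express $\mathscr{S}(F;X,N)_{\ast}$ as a Borel construction and to deduce stability from Theorem \ref{homstab1} via spectral sequence comparison. Since $\mathrm{Diff}_{\mathrm{oc}}^{+}(F)$ acts freely on the contractible space $E\mathrm{Diff}_{\mathrm{oc}}^{+}(F)$, the definition of $\mathscr{S}(F;X,N)_{\ast}$ gives a fibration
\[
\mathrm{Map}(F,\partial_{f}F;X,N)_{\ast} \longrightarrow \mathscr{S}(F;X,N)_{\ast} \longrightarrow B\mathrm{Diff}_{\mathrm{oc}}^{+}(F).
\]
Because $F$ has non-empty outgoing boundary and large enough genus to have negative Euler characteristic, standard contractibility results for components of diffeomorphism groups of surfaces identify $B\mathrm{Diff}_{\mathrm{oc}}^{+}(F) \simeq B\Gamma_{\mathrm{oc}}(F)$, and the Serre spectral sequence of this fibration takes the form
\[
E^{2}_{p,q} = H_{p}\bigl(B\Gamma_{\mathrm{oc}}(F);\,H_{q}(\mathrm{Map}(F,\partial_{f}F;X,N)_{\ast})\bigr) \Longrightarrow H_{p+q}(\mathscr{S}(F;X,N)_{\ast}),
\]
with local coefficients given by the natural mapping-class-group action on the fibre homology.

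The inclusion $F \hookrightarrow F^{\#}$, extended by the identity on diffeomorphisms and by the constant map $c_{\ast}$ on maps into $(X,N)$, induces a map of Borel fibrations and hence a morphism of spectral sequences. By Theorem \ref{homstab1} this morphism is an isomorphism on $E^{2}_{p,q}$ whenever $\mathrm{genus}(F) \geq 2p+q+2$. For all $(p,q)$ with $p+q \leq s+1$ the quantity $2p+q+2$ is maximised at $2(s+1)+2 = 2s+4$, so the hypothesis $\mathrm{genus}(F) > 2s+4$ supplies an isomorphism on the entire $E^{2}$-page through total degree $s+1$. Since the Serre differentials shift total degree by one, iso on $E^{2}$ through total degree $s+1$ propagates, by the usual inductive five-lemma argument applied to the successive kernels and cokernels, to iso on $E^{\infty}_{p,q}$ for all $p+q \leq s$; convergence then yields the required isomorphism on $H_{s}$.

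The step likely to require the most care is verifying that the local coefficient system appearing as $H_{q}(\mathrm{Map}(F,\partial_{f}F;X,N)_{\ast})$ in the Serre spectral sequence genuinely coincides, as a $\Gamma_{\mathrm{oc}}(F)$-module, with the coefficient system to which Theorem \ref{homstab1} applies, and that the coefficient map induced by $F \hookrightarrow F^{\#}$ is precisely the comparison map referenced there. Both points follow by naturality of the Borel construction and of the mapping-space functor: the monodromy action of $\pi_{1}B\Gamma_{\mathrm{oc}}(F) = \Gamma_{\mathrm{oc}}(F)$ is precomposition by representatives of $\Gamma_{\mathrm{oc}}(F)$, and extension-by-$c_{\ast}$ respects this action. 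Once these identifications are unwound, the spectral-sequence comparison described above goes through.
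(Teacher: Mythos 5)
Your argument is the same as the paper's: compare the Borel fibrations $\mathrm{Map}(F,\partial_{f}F;X,N)_{\ast} \to \mathscr{S}(F;X,N)_{\ast} \to B\mathrm{Diff}_{\mathrm{oc}}^{+}(F) \simeq B\Gamma_{\mathrm{oc}}(F)$ via their Serre spectral sequences and feed Theorem \ref{homstab1} into the $E^{2}$-page; the paper simply cites the Zeeman comparison theorem (in the form of \cite{Ivanov}, Theorem 1.2) where you carry out the comparison by hand, and your range bookkeeping giving the bound $2s+4$ matches. Your proof is correct and essentially identical to the paper's.
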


\begin{proof}
Consider the following map of fibrations
\begin{eqnarray*}
\begin{diagram}
\node{\mathrm{Map}(F,\partial_{f}F;X,N)_{\ast}}  \arrow{e}  \arrow{s}
\node{\mathscr{S}(F;X,N)_{\ast}} \arrow{e} \arrow{s}
\node{B\mathrm{Diff}_{\mathrm{oc}}^{+}(F)} \arrow{s}
\\
\node{\mathrm{Map}(F^{\#},\partial_{f}F^{\#};X,N)_{\ast}} \arrow{e}
\node{\mathscr{S}(F^{\#};X,N)_{\ast}} \arrow{e}
\node{B\mathrm{Diff}_{\mathrm{oc}}^{+}(F^{\#})}
\end{diagram}
\end{eqnarray*}
As a consequence of \cite{EE,ES}, the components of the diffeomorphism groups are contractible except in a few low genus cases so $B\mathrm{Diff}_{\mathrm{oc}}^{+}(F) \simeq B\Gamma_{\mathrm{oc}}(F)$. There is an induced map of the Serre spectral sequences associated to these fibrations and on the $E^{2}$-page it is the map
\begin{eqnarray*}
&& E_{p,q}^{2}=H_{p}(\Gamma_{\mathrm{oc}}(F),H_{q}(\mathrm{Map}(F, \partial_{f}F;X,N)_{\ast})) \\
&& \hspace{8mm} \longrightarrow \tilde{E}_{p,q}^{2}=H_{p}(\Gamma_{\mathrm{oc}}(F^{\#}),H_{q}(\mathrm{Map}(F^{\#} \partial_{f}F^{\#};X,N)_{\ast})).
\end{eqnarray*}
Theorem \ref{homstab1} tells us this map is an isomorphism when the genus of $F$ is greater than $2p+q+2$ and the corollary follows by the Zeeman comparison theorem (see \cite{Ivanov}, Theorem 1.2, for the comparison theorem in precisely the form we are using).
\end{proof}

\begin{corollary}[To Theorem \ref{homstab2}]
\label{surfstab2}
Suppose $X$ is simply-connected and $N$ is a point. Let $F$ be as in Theorem \ref{homstab1} and let $F^{\#}$ be obtained from $F$ by precomposing with a cylinder with one window then
\[
H_{s}(\mathscr{S}(F;X,N)_{\ast}) \to H_{s}(\mathscr{S}(F^{\#};X,N)_{\ast})
\]
is an isomorphism provided the genus of $F$ is greater than $2s+4$
and the number of windows is greater than $2s+10$.
\end{corollary}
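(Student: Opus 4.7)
The plan is to run exactly the argument used for Corollary \ref{surfstab1}, substituting the stronger hypotheses required by Theorem \ref{homstab2} for those of Theorem \ref{homstab1}, and then chasing the numerics through the Zeeman comparison theorem. First I would write down the same commutative square of Serre fibrations
\begin{eqnarray*}
\begin{diagram}
\node{\mathrm{Map}(F,\partial_{f}F;X,N)_{\ast}}  \arrow{e}  \arrow{s}
\node{\mathscr{S}(F;X,N)_{\ast}} \arrow{e} \arrow{s}
\node{B\mathrm{Diff}_{\mathrm{oc}}^{+}(F)} \arrow{s}
\\
\node{\mathrm{Map}(F^{\#},\partial_{f}F^{\#};X,N)_{\ast}} \arrow{e}
\node{\mathscr{S}(F^{\#};X,N)_{\ast}} \arrow{e}
\node{B\mathrm{Diff}_{\mathrm{oc}}^{+}(F^{\#})}
\end{diagram}
\end{eqnarray*}
and, appealing to \cite{EE,ES} as before, replace each base by $B\Gamma_{\mathrm{oc}}(F)$, respectively $B\Gamma_{\mathrm{oc}}(F^{\#})$; note that since the genus of $F$ is already $>2s+4\geq 2$, we are safely outside the low-genus exceptional range.

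Next I would pass to the map of Serre spectral sequences. On the $E^{2}$-page this is precisely the map
\[
H_{p}(\Gamma_{\mathrm{oc}}(F),H_{q}(\mathrm{Map}(F, \partial_{f}F;X,N)_{\ast})) \longrightarrow H_{p}(\Gamma_{\mathrm{oc}}(F^{\#}),H_{q}(\mathrm{Map}(F^{\#}, \partial_{f}F^{\#};X,N)_{\ast}))
\]
to which Theorem \ref{homstab2} applies. Since $F^{\#}$ is obtained from $F$ by precomposing with a cylinder carrying one window, Theorem \ref{homstab2} guarantees that this map is an isomorphism whenever the genus of $F$ exceeds $2p+q+2$ \emph{and} its window-number exceeds $2p+8$.

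Now I would check the numerics. For every $(p,q)$ with $p+q \leq s$ one has $2p+q+2 \leq 2(p+q)+2 \leq 2s+2$, so the genus hypothesis ``$g(F)>2s+4$'' ensures the genus condition of Theorem \ref{homstab2} is comfortably satisfied; and since $p \leq s$ gives $2p+8 \leq 2s+8$, the window hypothesis ``windows of $F$ exceed $2s+10$'' ensures the window condition. Thus the $E^{2}$-terms agree in the range needed by the Zeeman comparison theorem (in the precise form used in \cite{Ivanov}, Theorem 1.2), and the corollary follows. I do not anticipate a substantive obstacle: the only step requiring care is the bookkeeping above, and the potential annoyance is that the cylinder-with-one-window move genuinely needs the \emph{two} separate bounds---one on genus and one on number of windows---whereas Corollary \ref{surfstab1} only needed a genus bound; the extra $+10$ rather than $+8$ on the window threshold is the slack needed to cover the full $p+q \leq s$ strip.
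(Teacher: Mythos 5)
Your proposal is correct and follows essentially the same route as the paper, whose proof of this corollary is literally ``analogous to Corollary \ref{surfstab1}'': the same map of fibrations, the same passage to $B\Gamma_{\mathrm{oc}}$, and the Zeeman comparison applied to the map of twisted Serre spectral sequences with Theorem \ref{homstab2} supplying the $E^{2}$-isomorphisms. One small bookkeeping remark: the comparison theorem also needs control at total degree $s+1$ (iso below, epi there), which is the real reason the window threshold is $2s+10$ rather than $2s+8$; your stated hypotheses do cover that range, so nothing breaks.
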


\begin{proof}
The proof is analogous to that of Corollary \ref{surfstab1}.
\end{proof}

\begin{corollary}[To Theorem \ref{homstab3}]
\label{surfstab3}
Suppose that $X$ is simply-connected and $N$ is discrete. Let $F$ be as in Theorem \ref{homstab1} and $F^{\#}$ be obtained from $F$ by precomposing with a cylinder with one window. Then the map
\[
H_{s}(\mathscr{S}_{\infty}^{\infty}(F;X,N)_{\ast}) \to H_{s}(\mathscr{S}_{\infty}^{\infty}(F^{\#};X,N)_{\ast})
\]
is an isomorphism for all $s$.
\end{corollary}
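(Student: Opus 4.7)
I would follow the template of Corollary \ref{surfstab1}, but assemble the Borel construction in the limit first so that Theorem \ref{homstab3} can be applied directly, avoiding any range restriction. For each $k \geq 0$ the Borel construction gives a Serre fibration
\[
\mathrm{Map}(F^{k}, \partial_{f} F^{k}; X, N)_{\ast} \longrightarrow \mathscr{S}(F^{k}; X, N)_{\ast} \longrightarrow B\mathrm{Diff}_{\mathrm{oc}}^{+}(F^{k}),
\]
and by \cite{EE,ES} the identity components of $\mathrm{Diff}_{\mathrm{oc}}^{+}(F^{k})$ are contractible once $k$ is large enough, so $B\mathrm{Diff}_{\mathrm{oc}}^{+}(F^{k}) \simeq B\Gamma_{\mathrm{oc}}(F^{k})$. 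The stabilization map $F^{k} \hookrightarrow F^{k+1}$ induces a map of these fibrations (extending diffeomorphisms by the identity and maps to $X$ by $c_{\ast}$). Passing to the homotopy colimit in $k$ and using that hocolim commutes with the Borel construction yields a fibration
\[
\mathrm{Map}_{\infty}^{\infty}(F, \partial_{f} F; X, N)_{\ast} \longrightarrow \mathscr{S}_{\infty}^{\infty}(F; X, N)_{\ast} \longrightarrow B\Gamma_{\infty}^{\infty}(F),
\]
together with an analogous fibration for $F^{\#}$.

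Precomposition with a cylinder with one window then produces, at each stage $k$, a map from the fibration for $F^{k}$ to the one for $(F^{\#})^{k}$, compatible with the stabilization maps. These assemble into a map of the two limit fibrations, and I would then compare the induced Serre spectral sequences. On the $E^{2}$-page the map is precisely the twisted homology map considered in Theorem \ref{homstab3}, hence an isomorphism for all $s$ and $r$. Since the $E^{2}$-page map is an isomorphism in every bidegree, the map on $E^{\infty}$ and hence on the homology of the total spaces is an isomorphism; unlike in Corollary \ref{surfstab1}, neither a range restriction nor the Zeeman comparison theorem is needed.

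The one delicate point is the interchange of the homotopy colimit with the Borel construction, together with the passage from $B\mathrm{Diff}_{\mathrm{oc}}^{+}$ to $B\Gamma_{\mathrm{oc}}$ in the limit, and the verification that these equivalences are compatible with the stabilization maps so that the limit fibration really has the stated base, fibre, and total space. This is standard but is the step that would need to be written out carefully; once it is in place, the corollary is a direct consequence of Theorem \ref{homstab3}.
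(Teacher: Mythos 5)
Your proposal is correct and follows essentially the same route as the paper, whose proof is simply declared ``analogous to Corollary \ref{surfstab1}'': compare the Borel-construction fibrations after stabilizing, identify the map of $E^{2}$-pages of the Serre spectral sequences with the twisted-coefficient map of Theorem \ref{homstab3}, and conclude for the total spaces. Your observation that the $E^{2}$-isomorphism holds in all bidegrees, so that no stability range or Zeeman comparison is needed, is a fair simplification, and the point you flag (that the sequential hocolim of the fibrations is a fibration up to weak equivalence, compatibly with the stabilization maps) is exactly the standard bookkeeping the paper leaves implicit.
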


\begin{proof}
Again, this is analogous to the proof of Corollary \ref{surfstab1}.
\end{proof}

\subsection{A group completion argument}
\label{gpcomp}

Given a contravariant functor $F$ from a topological category $\mathcal{C}$ to $\mathrm{Spaces}$ we can consider the category $F \wr \mathcal{C}$. The objects of $F \wr \mathcal{C}$ are pairs $(c,x)$ with $c \in \mathrm{Obj} \hspace{1mm} \mathcal{C}$ and $x \in F(c)$. The morphisms are pairs $(m,y)$ with $m:a \to b$ a morphism in $\mathcal{C}$ and $x \in F(b)$; the morphism $(m,y)$ has source $(a, F(m)(x))$ and target $(b,x)$. There is a natural projection functor $F\wr \mathcal{C} \to \mathcal{C}$.

\begin{theorem}[\cite{4author}, Proposition 7.1]
\label{gp compl}
Suppose that the natural map $\mathrm{Obj} (F \wr \mathcal{C}) \to \mathrm{Obj} \hspace{1mm} \mathcal{C}$ is a Serre fibration and that $B(F \wr \mathcal{C})$ is contractible. If every morphism $m:a \to b$ in $\mathcal{C}$ induces an isomorphism
\[
F(m)_{\ast}:H_{\ast}(F(\alpha); \mathbb{Z}) \to H_{\ast}(F(\beta);\mathbb{Z})
\]
then for each object $c \in \mathcal{C}$ there is a map $F(c) \to \Omega_{c}B\mathcal{C}$ that induces an isomorphism in integral homology.
\end{theorem}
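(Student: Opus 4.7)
The plan is to realize the projection functor $\pi : F \wr \mathcal{C} \to \mathcal{C}$ as a ``homology fibration'' in the sense of McDuff-Segal after taking classifying spaces, with fiber over an object $c$ being the space $F(c)$. Once this is established, the hypothesis that $B(F \wr \mathcal{C})$ is contractible forces the homotopy fiber of $B\pi$ over $c$ to be $\Omega_c B\mathcal{C}$ (the homotopy fiber of any map out of a contractible space into a pointed space $Y$ based at $c$ is $\Omega_c Y$), and the homology fibration property then produces the desired homology equivalence $F(c) \to \Omega_c B\mathcal{C}$.

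To exhibit the fiber structure I would analyze $N_\bullet(F \wr \mathcal{C}) \to N_\bullet \mathcal{C}$ degree by degree. A $q$-simplex of $N_q(F \wr \mathcal{C})$ is a chain $(m_1, y_1), \ldots, (m_q, y_q)$ of composable morphisms in $F \wr \mathcal{C}$; by the very definition of the wreath product, such a chain is determined by its projection $m_1, \ldots, m_q : c_0 \to \cdots \to c_q$ in $N_q\mathcal{C}$ together with a single element $y_q \in F(c_q)$ (the remaining $y_i$ are forced by applying $F$ to the $m_j$). Thus each $N_q(F \wr \mathcal{C}) \to N_q \mathcal{C}$ has $F(c_q)$ as fiber; the hypothesis that the map on objects is a Serre fibration, together with standard gluing arguments at higher simplicial levels, upgrades each of these to a Serre fibration.

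The face map $d_q$ drops the last morphism in the chain and acts on fibers as $F(m_q) : F(c_q) \to F(c_{q-1})$, which is a homology isomorphism by hypothesis on $F$. The remaining face and degeneracy maps either act as the identity on fibers or compose morphisms without changing the target object. Under precisely these conditions the McDuff-Segal criterion (in the form used in \cite{4author}) says that the geometric realization $B\pi$ is a homology fibration, i.e.\ the inclusion of the strict fiber $F(c)$ into the homotopy fiber $\mathrm{hofib}_c(B\pi)$ is an integral homology equivalence. Composing with the identification $\mathrm{hofib}_c(B\pi) \simeq \Omega_c B\mathcal{C}$ coming from the contractibility of $B(F \wr \mathcal{C})$ yields a map $F(c) \to \Omega_c B\mathcal{C}$ that is an isomorphism on integral homology.

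The main obstacle is the verification of the McDuff-Segal homology fibration criterion, which amounts to a spectral sequence argument comparing the simplicial filtration of $B(F \wr \mathcal{C})$ to that of $B\mathcal{C}$ level by level: on the $E^2$-page one gets $H_p(N_\bullet \mathcal{C}; \mathcal{H}_q)$ with local coefficient system $\mathcal{H}_q(c) = H_q(F(c))$, and the homology-isomorphism hypothesis is exactly what is needed to see that this local system is locally constant and that strict and homotopy fibers agree. Rather than reprove this from scratch I would invoke the argument of Proposition 7.1 of \cite{4author} verbatim; the only content to check in each intended application is that its three hypotheses (Serre fibration on objects, contractible total space, and homology isomorphism on the morphism action of $F$) hold.
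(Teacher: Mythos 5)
Your proposal is correct and takes the same route as the paper, which in fact does not prove this statement at all but quotes it as Proposition 7.1 of \cite{4author}; the argument there is precisely the one you sketch, namely identifying $N_{q}(F \wr \mathcal{C}) \to N_{q}\mathcal{C}$ levelwise as a pullback of $\mathrm{Obj}(F \wr \mathcal{C}) \to \mathrm{Obj}\,\mathcal{C}$ with fibre $F(c_{q})$, noting that the last face map acts on fibres by $F(m)$ (a homology isomorphism by hypothesis), applying the McDuff--Segal homology-fibration criterion to the realization, and using contractibility of $B(F \wr \mathcal{C})$ to identify the homotopy fibre with $\Omega_{c}B\mathcal{C}$. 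The only cosmetic remark is that no ``gluing argument'' is needed for the levelwise Serre fibration claim: each $N_{q}(F \wr \mathcal{C}) \to N_{q}\mathcal{C}$ is literally a base change of the object-space map along the ``target of the last morphism'' map, and pullbacks of Serre fibrations are Serre fibrations.
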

\noindent
Here $\Omega_{c}B\mathcal{C}$ denotes the space of loops in $B\mathcal{C}$ based at $c$.

We now define a functor $\mathrm{Mor}_{\infty}: \tCb \to \mathrm{Spaces}$ and show that Theorem \ref{gp compl} applies to it. Let $\beta_{0}=[e_{0}, c_{\ast}]$ in $\mathrm{Obj} \hspace{1mm} \C^{b}$ where $e_{0}:S^{1} \to \mathbb{R}^{2} \to \mathbb{R}^{\infty}$ is the canonical embedding and $c_{\ast}:S^{1} \to X$ is the map which is constant at the basepoint. Define a contravariant functor $\mathrm{Mor}_{0}:\tCb \to \mathrm{Spaces}$ by
\[
\mathrm{Mor}_{0}((\alpha,l)):=\mathrm{Mor}_{\tCb}((\alpha,l), (\beta_{0},\varnothing)).
\]

Note that $(\beta_{0},\varnothing)$ lies in the component $\tCb(0)$ of $\tCb$. For those objects $(\alpha,l)$ that lie in some other component, we know from Section \ref{path-components} that there are no morphisms from $(\alpha,l)$ to $(\beta_{0},\varnothing)$ so $\mathrm{Mor}_{0}((\alpha,l))$ is empty. For $(\alpha,l)$ in $\tCb (0)$ we have the following proposition.

\begin{proposition}
\label{morphism spaces}
Suppose $X$ is simply-connected and $N$ is discrete. Let $(\alpha,l)$ be an object in $\tCb (0)$ with underlying $1$-manifold $S$. There is a homotopy equivalence
\[
\mathrm{Mor}_{0}((\alpha,l)) \simeq \coprod_{g \geq 0} \coprod_{w \geq 0} \mathscr{S}(F^{w}_{g};X,N)_{\ast}
\]
where $F^{w}_{g}$ is a connected open-closed cobordism from $S$ to $S^{1}$ with $w$ windows, genus $g$ and having the boundary structure prescribed by $l$ (see section \ref{definitions}). 
\end{proposition}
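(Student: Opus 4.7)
The plan is to unpack $\mathrm{Mor}_0((\alpha, l))$ as a disjoint union of Borel constructions, one for each admissible underlying cobordism type, and then match each piece to the corresponding $\mathscr{S}(F^{w}_{g};X,N)_{\ast}$ using that $X$ is simply-connected.

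First I decompose by cobordism type. Any $m \in \mathrm{Mor}_0((\alpha, l))$ has a connected underlying open-closed cobordism $F$, since the target $(\beta_0, \varnothing)$ has underlying $1$-manifold $S^{1}$ and $\tCb$ forces morphisms into such objects to be connected (as explained just after the definition of $\tCb$ in section \ref{definitions}). The condition $\varnothing \circ \mathrm{R}(m) = l$ then pins down the combinatorial structure of $\partial_{f}F$ (the windows and the gluing pattern of the intervals between points of $\partial S$). Hence $F$ is determined up to orientation-preserving diffeomorphism by its genus $g$ and its number of windows $w$. With $F^{w}_{g}$ the fixed representative, this yields a decomposition
\[
\mathrm{Mor}_0((\alpha, l)) = \coprod_{g, w \geq 0} \mathcal{M}_{g, w},
\]
where $\mathcal{M}_{g, w}$ is the subspace of morphisms whose underlying cobordism is of class $F^{w}_{g}$.

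Next I realise each $\mathcal{M}_{g, w}$ as a Borel construction. Fix representatives $(e, f)$ of $\alpha$ and $(e_{0}, c_{\ast})$ of $\beta_{0}$. The boundary-restriction homomorphism
\[
\mathrm{Diff}^{\ast}(F^{w}_{g}) \twoheadrightarrow \mathrm{Diff}^{+}(\partial_{in} F^{w}_{g}) \times \mathrm{Diff}^{+}(\partial_{out} F^{w}_{g})
\]
is surjective (one realises any boundary diffeomorphism by reparametrising the collars and extending by the identity) with kernel $\mathrm{Diff}^{+}_{\mathrm{oc}}(F^{w}_{g})$. Consequently every $\mathrm{Diff}^{\ast}$-orbit over $(\alpha, \beta_{0})$ admits a rigidified representative whose boundary data is literally $(e, e_{0})$ and $(f, c_{\ast})$, with residual symmetry group $\mathrm{Diff}^{+}_{\mathrm{oc}}(F^{w}_{g})$. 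Since the space $\mathrm{Emb}^{e, e_{0}}(F^{w}_{g}, \Rinf \times [0,1])$ of such rigidified embeddings is contractible with free $\mathrm{Diff}^{+}_{\mathrm{oc}}$-action, this yields a model for
\[
\mathcal{M}_{g, w} \simeq E\mathrm{Diff}^{+}_{\mathrm{oc}}(F^{w}_{g}) \times_{\mathrm{Diff}^{+}_{\mathrm{oc}}(F^{w}_{g})} \mathrm{Map}_{f, c_{\ast}}(F^{w}_{g}, \partial_{f} F^{w}_{g}; X, N),
\]
where $\mathrm{Map}_{f, c_{\ast}}$ denotes the subspace of maps whose boundary restrictions equal $f$ on $\partial_{in} F^{w}_{g}$ and $c_{\ast}$ on $\partial_{out} F^{w}_{g}$.

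Finally I replace $\mathrm{Map}_{f, c_{\ast}}$ by $\mathrm{Map}(F^{w}_{g}, \partial_{f} F^{w}_{g}; X, N)_{\ast}$. The boundary-restriction map $\mathrm{Map}(F^{w}_{g}, \partial_{f} F^{w}_{g}; X, N) \to \mathrm{Map}(\partial_{in} F^{w}_{g} \cup \partial_{out} F^{w}_{g}, X)$ is a Hurewicz fibration. Because $X$ is simply-connected, the two relevant basepoints $(f, c_{\ast})$ and $(c_{\ast}, c_{\ast})$ in the base lie in the same path-component, so a path between them induces a homotopy equivalence of their fibres; this equivalence is automatically $\mathrm{Diff}^{+}_{\mathrm{oc}}$-equivariant because $\mathrm{Diff}^{+}_{\mathrm{oc}}$ acts trivially on the base. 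Passing to Borel constructions gives $\mathcal{M}_{g, w} \simeq \mathscr{S}(F^{w}_{g}; X, N)_{\ast}$, and taking the disjoint union over $(g, w)$ completes the proof. The main obstacle is this last step: one must work $\mathrm{Diff}^{+}_{\mathrm{oc}}$-equivariantly while carefully handling the constraint that $\partial S$ must land in $N \subseteq X$, and here the hypothesis $(\alpha, l) \in \tCb(0)$ (weighted signature zero), combined with $X$ simply-connected and $N$ discrete, is precisely what makes the deformation of the boundary data admissible.
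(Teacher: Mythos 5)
Your first two steps (decomposition by genus and window number, and the rigidification of the boundary data to pass from $\mathrm{Diff}^{\ast}$-orbits with boundary condition $[e\cup e_{0},f\cup c_{\ast}]$ to a Borel construction for $\mathrm{Diff}^{+}_{\mathrm{oc}}(F^{w}_{g})$ with strict boundary condition $f\cup c_{\ast}$) match the paper's argument, which handles the rigidification by citing the analogous argument of Madsen--Tillmann and the contractibility of the embedding space. The genuine gap is in your last step, which is exactly the step the paper has to work hardest for. The restriction map you use is a fibration only onto the space of maps of \emph{pairs} $(\partial_{in}F\cup\partial_{out}F,\,\partial S)\to(X,N)$: the points of $\partial S$ are corner points lying on $\partial_{f}F$, so any lift of a path in the base must keep their values in $N$. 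With the unconstrained base $\mathrm{Map}(\partial_{in}F\cup\partial_{out}F,X)$ that you wrote down, path lifting fails (a path moving $f(\partial S)$ off $N$ has no lift), so it is not a fibration; with the correct base, discreteness of $N$ forces the values on $\partial S$ to be locally constant along any path, so $(f,c_{\ast})$ and $(c_{\ast},c_{\ast})$ lie in the same path component only when $f(\partial S)=\{\ast\}$. For a general object of $\tCb(0)$, whose intervals have endpoints labelled by arbitrary points of $N$, no such path exists and your fiber-transport equivalence cannot be constructed; weighted signature zero does not help, since with $N$ discrete the boundary values cannot be deformed at all within maps of pairs on a fixed surface. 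The paper circumvents precisely this obstruction with an explicit map $\mathscr{H}$: it glues cylinders onto the non-window boundary circles (including their free-boundary subintervals), extends by null-homotopies of the boundary loops (using simple connectivity of $X$), and pulls back along a collar-stretching diffeomorphism, so that the free intervals end up at the basepoint without ever homotoping the boundary condition inside a fixed mapping space of pairs.

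A secondary problem is your equivariance claim: a fiber-transport equivalence obtained from the homotopy lifting property is a choice, and triviality of the group action on the base does not make that choice $\mathrm{Diff}^{+}_{\mathrm{oc}}$-equivariant. Indeed the paper's map $\mathscr{H}$ is explicitly \emph{not} equivariant; it is equivariant only for the subgroup $\widetilde{\mathrm{Diff}}^{+}_{\mathrm{oc}}(F)$ of diffeomorphisms fixing the chosen collars, and one then uses that this inclusion is a homotopy equivalence. This particular defect of your argument is repairable (for instance by performing the fiber transport after passing to the induced fibration of Borel constructions over $B\mathrm{Diff}^{+}_{\mathrm{oc}}(F^{w}_{g})\times(\text{base})$, where restrictions over homotopic sections are fiber homotopy equivalent), but the path-component failure described above is a missing idea, not a technicality, and as written the proof does not go through for objects with non-empty $\partial S$.
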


\begin{proof}
Let $\alpha=[e,f]$ with underlying $1$-manifold $S$. For any open-closed cobordism $F$ from $S$ to $S^{1}$ let
\[
\left( \mathrm{Emb}(F;\mathbb{R}^{\infty} \times [0,1]) \times \mathrm{Map}(F, \partial_{f} F;X,N) \right)_{[e\cup e_{0},f\cup c_{\ast}]}
\]
be the subspace of pairs $(h,\phi)$ such that $[h|_{\partial_{in}F},\phi|_{\partial_{in}F}]=[e,f]$ and \linebreak $[h|_{\partial_{out}F},\phi|_{\partial_{out}F}]=[e_{0},c_{\ast}]$. Let
\[
\left( \mathrm{Emb}(F;\mathbb{R}^{\infty} \times [0,1]) \times \mathrm{Map}(F, \partial_{f} F;X,N) \right)_{e\cup e_{0},f\cup c_{\ast}}
\]
be the subspace of pairs $(h,\phi)$ such that $(h|_{\partial_{in}F},\phi|_{\partial_{in}F})=(e,f)$ and \linebreak $(h|_{\partial_{out}F},\phi|_{\partial_{out}F})=(e_{0},c_{\ast})$.

We noted in section \ref{definitions} that the isomorphism type of the cobordism underlying a morphism $m:(\alpha,l) \to (\beta_{0},\varnothing)$ is completely determined by its genus and the number of windows. So $\mathrm{Mor}_{\tCb}((\alpha,l), (\beta_{0},\varnothing))$ is given by
\begin{equation*}
\coprod_{g,w \geq 0} \left( \mathrm{Emb}(F^{w}_{g};\mathbb{R}^{\infty} \times [0,1]) \times \mathrm{Map}(F^{w}_{g}, \partial_{f} F^{w}_{g};X,N) \right)_{[e\cup e_{0},f\cup c_{\ast}]} / \mathrm{Diff}^{\ast}(F^{w}_{g}).
\end{equation*}
Using an argument entirely analogous to that in \cite{MadTill}, Theorem 2.1, it can be shown that the $(g,w)$ component is homotopy equivalent to
\[
\left( \mathrm{Emb}(F^{w}_{g};\mathbb{R}^{\infty} \times [0,1]) \times \mathrm{Map}(F^{w}_{g}, \partial_{f} F^{w}_{g};X,N) \right)_{e\cup e_{0},f\cup c_{\ast}} / \mathrm{Diff}_{\mathrm{oc}}^{+}(F^{w}_{g}).
\]
Using the fact that the embedding space is contractible (by the Whitney embedding theorems), this is homotopy equivalent to 
\begin{eqnarray}
\label{component}
E\mathrm{Diff}^{+}_{\mathrm{oc}}(F^{w}_{g}, \mathbb{R}^{\infty} \times [0,1]) \times_{\mathrm{Diff}^{+}_{\mathrm{oc}}(F^{w}_{g})} \mathrm{Map}(F^{w}_{g}, \partial_{f} F^{w}_{g};X,N)_{f\cup c_{\ast}}.
\end{eqnarray}
To complete the proof, we will display a homotopy equivalence 
\[
\mathscr{H}: \mathrm{Map}(F, \partial_{f}F; X,N)_{f \cup c_{\ast}} \to \mathrm{Map}(F, \partial_{f}F; X,N)_{c_{\ast} \cup c_{\ast}}.
\]
which induces a homotopy equivalence between the space in (\ref{component}), and $\mathscr{S}(F^{w}_{g};X,N)_{\ast}$.

For ease of notation, write $F=F^{w}_{g}$. To define $\mathscr{H}$, we first assume that for each non-window boundary component $\partial_{i}F$ of $F$, a closed collar 
$$
[0,1] \times \partial_{i}F \hookrightarrow F 
$$
has been specified, with $\{1\} \times \partial_{i}F$ identified with $\partial_{i}F$. Let $\phi$ be an element of $\mathrm{Map}(F, \partial_{f}F; X,N)_{f \cup c_{\ast}}$. Because $N$ is discrete, $f$ completely determines $\phi \vert_{\partial_{i}F}$ for every non-window boundary component of $F$. Since $X$ is simply-connected, each $\phi \vert_{\partial_{i}F}$ is null-homotopic, by a homotopy $H^{i}: S^{1} \times I \to X$ say. Define $\mathscr{H}$ as follows; glue a cylinder to each of the boundary components $\partial_{i}F$ and let $F'$ denote the surface we obtain. Given $\psi \in \mathrm{Map}(F, \partial_{f}F; X,N)_{f \cup c_{\ast}}$, define $\psi': F' \to X$ by setting $\psi'$ equal to $\psi$ on $F \subseteq F'$ and equal to $H^{i}$ on the cylinder glued on to $\partial_{i}F$. Fix a diffeomorphism $d:F \to F'$ that is the identity on the complement of the open collars $(0,1] \times \partial_{i}F$ and stretches the collars along the new cylinders. Set $\mathscr{H}(\psi):= \psi' \circ d$. This is a homotopy equivalence with homotopy inverse given by gluing on cylinders and extending maps by the conjugate homotopies $\bar{H^{i}}$.

The map $\mathscr{H}$ is not $\mathrm{Diff}^{+}_{\mathrm{oc}}(F)$-equivariant but we do have the following equivariance property. Let $\widetilde{\mathrm{Diff}}^{+}_{\mathrm{oc}}(F) \subseteq \mathrm{Diff}^{+}_{\mathrm{oc}}(F)$ denote the group of diffeomorphisms that restrict to the identity on the closed collars. The map $\mathscr{H}$ is $\widetilde{\mathrm{Diff}}^{+}_{\mathrm{oc}}(F)$-equivariant and the inclusion $\widetilde{\mathrm{Diff}}^{+}_{\mathrm{oc}}(F) \hookrightarrow \mathrm{Diff}^{+}_{\mathrm{oc}}(F)$ is a homotopy equivalence. Thus $\mathscr{H}$ induces a homotopy equivalence from the space in (\ref{component}) to $\mathscr{S}(F^{w}_{g};X,N)_{\ast}$, as required.
\end{proof}

Let $t:\beta_{0} \to \beta_{0}$ denote the morphism $[h_{0}, c_{\ast}]$ where $h_{0}:T^{2} \setminus (3 \hspace{2mm} \text{holes}) \to \mathbb{R}^{\infty} \times [0,1]$ is an embedding restricting to $e_{0}$ on the incoming and outgoing boundary (the torus with three holes being considered as a cobordism from the circle to itself) and $c_{\ast}$ again represents a constant map. For an object $(\alpha,l)$ in $\tCb$ define 
\[
\mathrm{Mor}_{\infty}((\alpha,l)) = \mathrm{hocolim} \hspace{1mm} (\mathrm{Mor}_{0}((\alpha,l)) \stackrel{T}{\longrightarrow} \mathrm{Mor}_{0}((\alpha,l))\stackrel{T}{\longrightarrow} \ldots)
\]
where the map $T$ is given by composing with $t$.
By Proposition \ref{morphism spaces} we have
\begin{equation}
\label{morinfty}
\mathrm{Mor}_{\infty}((\alpha,l)) \simeq \mathbb{Z} \times \mathbb{Z} \times \mathscr{S}_{\infty}^{\infty}(F_{(\alpha,l)};X,N)_{\ast}
\end{equation}
where $F_{(\alpha,l)}$ is some open-closed cobordism to the circle having the boundary structure prescribed by $(\alpha,l)$.

Precomposition with a morphism $m: (\alpha,l) \to (\beta,k)$ induces a map $\mathrm{Mor}_{0}((\beta,k)) \to \mathrm{Mor}_{0}((\alpha,l))$ which commutes with $T$. Thus we obtain a map $\mathrm{Mor}_{\infty}((\beta,k)) \to \mathrm{Mor}_{\infty}((\alpha,l))$ and we have that $\mathrm{Mor}_{\infty}$ is a contravariant functor from $\tCb$ to $\mathrm{Spaces}$.

\begin{proposition}
\label{brampton}
Suppose that $X$ is simply-connected and $N$ is discrete. If $m:(\alpha,l) \to (\beta,k)$ is a morphism in $\tCb$ whose map to the background space is constant at the basepoint then
\[
\mathrm{Mor}_{\infty}(m):\mathrm{Mor}_{\infty}((\beta,k)) \to \mathrm{Mor}_{\infty}((\alpha,l))
\]
is an isomorphism on integral homology.
\end{proposition}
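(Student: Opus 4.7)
The plan is to use the identification in equation~(\ref{morinfty}) together with the stability corollaries of Section \ref{homstab}.

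First I would track precomposition with $m$ under the equivalences of Proposition \ref{morphism spaces}. Let $F_m$ denote the open-closed cobordism underlying $m$, with genus $g_m$ and $w_m$ windows. Because the map to $X$ associated to $m$ is constant at the basepoint, precomposition with $m$ sends each summand $\mathscr{S}(F^{w}_{g};X,N)_{\ast}$ of $\mathrm{Mor}_{0}((\beta,k))$ to the summand $\mathscr{S}(F^{w+w_m}_{g+g_m};X,N)_{\ast}$ of $\mathrm{Mor}_{0}((\alpha,l))$ via the stabilization map that extends diffeomorphisms by the identity and maps by $c_{\ast}$. Passing to the hocolim over $T$ and using (\ref{morinfty}), $\mathrm{Mor}_{\infty}(m)$ becomes the product of a translation by $(g_m,w_m)$ on the $\mathbb{Z} \times \mathbb{Z}$-factor with a stabilization map $\mathscr{S}_{\infty}^{\infty}(F_{(\beta,k)};X,N)_{\ast} \to \mathscr{S}_{\infty}^{\infty}(F_{(\alpha,l)};X,N)_{\ast}$. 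The translation is a bijection, so it suffices to show that this stabilization map on $\mathscr{S}^{\infty}_{\infty}$ induces an isomorphism on integral homology.

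To do this I would factor $F_m$ as a composition of elementary pieces of two types: ordinary cobordisms with no windows, and cylinders each with exactly one window. Each window-adding piece yields the desired homology isomorphism via Corollary \ref{surfstab3}. Each ordinary piece yields an isomorphism on $H_{s}(\mathscr{S}(\cdot;X,N)_{\ast})$ via Corollary \ref{surfstab1} once the source genus is large enough relative to $s$; since the source genus tends to infinity along the hocolim defining $\mathscr{S}^{\infty}_{\infty}$, a direct colimit argument shows that the induced map on the hocolim is a homology isomorphism in every degree. Composing the isomorphisms coming from the individual pieces then yields the result for $F_m$ itself.

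The main obstacle is arranging the decomposition of $F_m$ so that each intermediate cobordism remains connected (a hypothesis of both corollaries) and the stable-range condition of Corollary \ref{surfstab1} applies at each finite stage. This can be handled by first pre-stabilizing with sufficiently many copies of the torus-with-three-holes $t$ (raising the source genus as high as needed) before choosing a handle decomposition of $F_m$ compatible with its free-boundary structure: any handle that might disconnect an intermediate cobordism can be absorbed into further $T$-stabilization, so we may assume each elementary piece leaves the intermediate connected.
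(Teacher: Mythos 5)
Your proposal is correct and takes essentially the same route as the paper: after the identification in equation (\ref{morinfty}), the map becomes a stabilization map on $\mathbb{Z} \times \mathbb{Z} \times \mathscr{S}^{\infty}_{\infty}$, and the paper simply quotes Corollaries \ref{surfstab1}, \ref{surfstab2} and \ref{surfstab3} at this point, whereas you spell out the factorization into window-free pieces and one-window cylinders together with the colimit argument that removes the genus range. One small wording caveat: the window-free pieces in your factorization need not be \emph{ordinary} cobordisms (they will have free-boundary intervals whenever the objects have boundary), but Corollary \ref{surfstab1} applies to arbitrary open-closed pieces that add no windows, so your argument goes through unchanged.
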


\begin{proof}
In this case $(\alpha,l), (\beta,k)$ both lie in $\tCb(0)$ and we can think of $\mathrm{Mor}_{\infty}(m)$ as a map
\[
\mathrm{Mor}_{\infty}(m):\mathbb{Z} \times \mathbb{Z} \times \mathscr{S}_{\infty}^{\infty}(F_{(\beta,k)};X,N)_{\ast} \to \mathbb{Z} \times \mathbb{Z} \times \mathscr{S}_{\infty}^{\infty}(F_{(\alpha,l)};X,N)_{\ast}.
\]
This is an isomorphism by Corollaries \ref{surfstab1}, \ref{surfstab2} and \ref{surfstab3}.
\end{proof}

For morphisms whose underlying map is non-constant we must do a little more work. This is because our stability theorems only tell us what happens when we precompose with a cobordism and extend the maps to the background space by constant ones.

\begin{proposition}
\label{panuba}
When $X$ is simply-connected and $N$ is discrete, every morphism $m:(\alpha,l) \to (\beta,k)$ in $\tCb$ becomes an isomorphism in integral homology after applying $\mathrm{Mor}_{\infty}$.
\end{proposition}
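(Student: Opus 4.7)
The plan is to reduce to the constant--map case already handled by Proposition \ref{brampton}. By Proposition \ref{morphism spaces}, $\mathrm{Mor}_{0}((\alpha,l))$ is empty whenever $(\alpha,l) \notin \tCb(0)$, and likewise for the target; the statement is therefore vacuous unless both endpoints of $m$ lie in $\tCb(0)$, so I will assume this from the outset. Writing $m = [h,\phi]$, I associate to $m$ a companion morphism $m_{0}=[h,c_{\ast}] : (\alpha_{\ast},l) \to (\beta_{\ast},k)$, where $(\alpha_{\ast},l)$ and $(\beta_{\ast},k)$ are the ``basepoint--labeled'' objects obtained by replacing the boundary maps of $\alpha$ and $\beta$ by the constant map at the basepoint of $N$. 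Because $|\partial_{+}S| = |\partial_{-}S|$ for any compact oriented $1$--manifold with boundary, these two companion objects automatically lie in $\tCb(0)$, and $m_{0}$ is a genuine morphism with constant map to $X$.

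The next step is to identify source and target of $\mathrm{Mor}_{\infty}(m)$ with those of $\mathrm{Mor}_{\infty}(m_{0})$. Using the homotopy equivalence $\mathscr{H}$ from the proof of Proposition \ref{morphism spaces}, both $\mathrm{Mor}_{\infty}((\alpha,l))$ and $\mathrm{Mor}_{\infty}((\alpha_{\ast},l))$ are identified with $\mathbb{Z} \times \mathbb{Z} \times \mathscr{S}_{\infty}^{\infty}(F_{(\alpha,l)}; X,N)_{\ast}$ via equation (\ref{morinfty}), because that description depends only on the underlying $1$--manifold and on the $\mathcal{D}$--element $l$, not on the boundary map to $X$; the analogous identification is made for $(\beta,k)$ and $(\beta_{\ast},k)$. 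Under these identifications, $\mathrm{Mor}_{\infty}(m)$ and $\mathrm{Mor}_{\infty}(m_{0})$ become two maps between the \emph{same} pair of spaces. Their actions on the $\mathbb{Z} \times \mathbb{Z}$ factor agree: both are translation by the genus and window count of the underlying cobordism of $m$, which is an isomorphism. The substance of the argument therefore lies in comparing the two induced maps on the $\mathscr{S}_{\infty}^{\infty}$ factor.

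At the level of representatives, precomposition with $m$ sends a cobordism--with--map $(F',\psi')$ to $(F \# F', \phi \# \psi')$, whereas precomposition with $m_{0}$ sends its $\mathscr{H}$--image to $(F \# F', c_{\ast} \# \psi')$. After applying $\mathscr{H}$ on the target side, both maps land in $\mathscr{S}(F \# F';X,N)_{\ast}$, that is, their representatives agree on $\partial_{\mathrm{in}} \cup \partial_{\mathrm{out}}$ (both equal to $c_{\ast}$). The plan is to show that these two representatives give homologous classes in $\mathscr{S}_{\infty}^{\infty}$. Granted this, Proposition \ref{brampton} applied to $m_{0}$ finishes the proof, since homology isomorphisms are stable under homotopy of the map.

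The main obstacle is the last step: verifying that extension by $\phi$ and extension by $c_{\ast}$ give homologous classes in $\mathscr{S}_{\infty}^{\infty}(F_{(\alpha,l)};X,N)_{\ast}$. The difference between the two maps $F \to X$ is measured by an element of the relative homotopy set $[F/\partial F, X]$, and I would show that after sufficient stabilization by gluing on tori with three holes and cylinders with windows (the stabilizations built into $\mathscr{S}_{\infty}^{\infty}$), this difference can be absorbed into the $\mathrm{Diff}^{+}_{\mathrm{oc}}$--action, using simple--connectivity of $X$ to trivialize the obstruction on the $1$--skeleton and the extra genus to absorb the remaining $\pi_{2}(X)$--valued contribution. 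This step is very much in the spirit of the absorption argument that yields Theorem \ref{homstab3}, and once it is in place the reduction to Proposition \ref{brampton} is immediate.
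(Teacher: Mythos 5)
Your reduction to Proposition \ref{brampton} hinges entirely on the final ``absorption'' step: that after stabilization the two representatives obtained by extending a class by $\phi$ and by $c_{\ast}$ become homologous, the difference being absorbed into the $\mathrm{Diff}^{+}_{\mathrm{oc}}$-action using extra genus. This step is a genuine gap, and as stated it is false in general. The two precomposition maps you are comparing differ by the relative class $\phi_{\ast}[F,\partial F]$, which is detected already on $\pi_{0}$ of the moduli spaces: homotopy classes of maps to $X$ rel boundary carry an $H_{2}(X)$-valued (equivalently, after using simple-connectivity, $\pi_{2}$-valued) invariant that is preserved both by the action of orientation-preserving diffeomorphisms fixing the boundary and by the stabilization maps defining $\mathscr{S}^{\infty}_{\infty}$, since those extend maps by the constant map $c_{\ast}$. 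Hence when this class of $\phi$ is nonzero, precomposition with $(F,\phi)$ and with $(F,c_{\ast})$ send a given component of $\mathscr{S}^{\infty}_{\infty}(F_{(\beta,k)};X,N)_{\ast}$ to distinct components of the target, so the maps are not homotopic and do not even agree on $H_{0}$; no amount of added genus or windows changes this, because the stabilizations never alter the relative $H_{2}(X)$-class. (Your comparison with Theorem \ref{homstab3} is misleading: the mechanism there kills a permutation action on labels in $N^{w}$, not a shift of the underlying homotopy class of the map to $X$.)

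The paper's proof avoids this problem by a different reduction, and this is the idea your proposal is missing: using the collar structure, $m$ is factored as $m_{1}\circ\cdots\circ m_{k}$ where each piece either has constant map to the background space (these induce homology isomorphisms on $\mathrm{Mor}_{\infty}$ by Proposition \ref{brampton}), or has underlying cobordism a cylinder $S\times I$ with an arbitrary map $\psi_{i}$. The crucial observation is that a cylinder morphism is invertible up to homotopy --- its ``inverse'' is the reversed cylinder with the time-reversed map $\bar{\psi}_{i}(s,t)=\psi_{i}(s,1-t)$ --- so precomposition with it induces a homotopy equivalence on $\mathrm{Mor}_{\infty}$, with no need to compare it to the constant-map case at all. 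In other words, the nonconstant part of the map is not argued away; it is isolated on an invertible morphism. If you want to keep the spirit of your approach, you would have to replace the absorption claim by this cylinder-invertibility argument (or something equivalent), since comparing $[h,\phi]$ directly with $[h,c_{\ast}]$ cannot work once $H_{2}(X)\neq 0$.
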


\begin{proof}
If $(\alpha,l)$ and $(\beta,k)$ are in some component of $\tCb$ other than $\tCb(0)$ then $\mathrm{Mor}_{\infty}((\alpha,l))$ and $\mathrm{Mor}_{\infty}((\beta,k))$ are both empty so the conclusion of the proposition holds. Assume that $(\alpha,l)$ and $(\beta,k)$ are in $\tCb(0)$.

Using the fact that $X$ is simply-connected and $N$ is discrete, we can decompose $m$ as $m_{1} \circ m_{2} \circ \ldots \circ m_{k}$ where for each $i$ we have either
\begin{itemize}
\item[(a)] $m_{i}=[h_{i}, c_{i}]$ where $c_{i}$ is constant at some point in $N$
\item[(b)] the cobordism underlying $m_{i}$ is of the form $S \times I$ for some $1$-manifold $S$.
\end{itemize}
Those $m_{i}$ satisfying (a) induce homology isomorphisms on $\mathrm{Mor}_{\infty}$ by Proposition \ref{brampton}. Those $m_{i}=[h_{i}, \psi_{i}]$ satisfying (b) induce homotopy equivalences on $\mathrm{Mor}_{\infty}$ with homotopy inverse induced by $\bar{m}_{i}=[h_{i}, \bar{\psi}_{i}]$ where $\bar{\psi}_{i}(s,t)=\psi_{i}(s,1-t)$ for $(s,t) \in S \times I$.
\end{proof}

\begin{proposition}
\label{Serre fibration}
The projection map $\mathrm{Obj} \hspace{1mm}(\mathrm{Mor}_{\infty} \wr \tCb) \to \mathrm{Obj} \hspace{1mm} \tCb$ is a Serre fibration.
\end{proposition}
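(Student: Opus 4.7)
The plan is to reduce the statement to the fact that the source map on the morphism space of $\tCb$ is a Serre fibration, then use compactness of the disk to pass between $\mathrm{Mor}_\infty$ and its finite stages. By definition a point of $\mathrm{Obj}(\mathrm{Mor}_\infty \wr \tCb)$ is a pair $((\alpha,l),x)$ with $x \in \mathrm{Mor}_\infty((\alpha,l))$, and the projection records $(\alpha,l)$. Given a lifting problem $H: D^n \times I \to \mathrm{Obj}(\tCb)$ with a lift $\tilde{H}_0$ of $H|_{D^n \times \{0\}}$, compactness of $D^n$ guarantees that $\tilde{H}_0$ factors through the $k$-th stage $\mathrm{Mor}_0$ of the mapping telescope for some finite $k$. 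Since the telescope map $T$ is composition with the fixed morphism $t:\beta_0 \to \beta_0$, it preserves source objects and so commutes with the projection to $\mathrm{Obj}(\tCb)$. Consequently, lifting $H$ at stage $k$ is enough, because the lift then composes with the inclusion into the telescope to give the required lift into $\mathrm{Mor}_\infty$.

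Next, stage $k$ is the space $\coprod_{(\alpha,l)} \mathrm{Mor}_{\tCb}((\alpha,l),(\beta_0,\varnothing))$, namely the fiber of the target map of $\tCb$ over $(\beta_0,\varnothing)$; the projection to $\mathrm{Obj}(\tCb)$ is the restriction of the source map $s:\mathrm{Mor}(\tCb) \to \mathrm{Obj}(\tCb)$. So it suffices to show $s$ is a Serre fibration on the relevant components. To lift a path $(\alpha_u,l_u)$ through a representative $[h_0,\phi_0]$ of a morphism over $(\alpha_0,l_0)$, note that $l$ is locally constant because its isomorphism class in $\mathcal{D}$ is discrete. I then extend a lifted path of embeddings $e_u:\partial_{in}F \to \mathbb{R}^\infty$ to a path $h_u:F \to \mathbb{R}^\infty \times [0,1]$ by the isotopy extension theorem, exploiting the infinite-dimensional ambient space to ensure unobstructed extension; and I extend a lifted path of boundary maps to a path $\phi_u:(F,\partial_f F)\to(X,N)$ using the fact that $\partial_{in}F \hookrightarrow F$ is a cofibration, so that restriction of mapping spaces is a Serre fibration.

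The main obstacle is performing these two extension constructions continuously in the disk parameter and simultaneously in a way compatible with the $\mathrm{Diff}^{\ast}(F)$-equivalence relation that defines the orbit space $\mathrm{Mor}(\tCb)$. I would handle this in the standard manner used for cobordism categories of embedded manifolds, cf.\ \cite{MadTill} and \cite{4author}: pass to the Borel construction $E\mathrm{Diff}^{\ast}(F) \times_{\mathrm{Diff}^{\ast}(F)} \bigl(\mathrm{Emb}(F,\mathbb{R}^\infty\times[0,1]) \times \mathrm{Map}(F,\partial_f F;X,N)\bigr)$, where the lifting problem separates into a principal-bundle part over $B\mathrm{Diff}^{\ast}(F)$ and the embedding/map lifting above, each of which is a Serre fibration over the corresponding boundary data. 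Assembling these gives the Serre fibration property of $s$, and, via Step 1, of the projection map in the statement.
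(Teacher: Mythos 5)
Your argument is essentially the paper's: the paper exhibits $\mathrm{Obj}\,(\mathrm{Mor}_{0} \wr \tCb)$ as the pullback of the (source, target) map $\mathrm{Mor}\,\tCb \to \mathrm{Obj}\,\tCb \times \mathrm{Obj}\,\tCb$, which is a Serre fibration because it is given by restricting embeddings and background maps to the boundary, and then passes to $\mathrm{Mor}_{\infty}$; your reduction to the source map over the fixed target $(\beta_{0},\varnothing)$ together with a telescope argument is the same reasoning, just with more detail on why the restriction map fibres. The only imprecision is that compactness of $D^{n}$ places the initial lift in a finite partial telescope rather than in a single stage $\mathrm{Mor}_{0}$, but this is easily repaired (e.g.\ using the fibrewise deformation retraction of the partial telescope onto its last stage, a point the paper itself leaves implicit), so the proposal is correct.
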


\begin{proof}
Consider the following pullback square
\begin{eqnarray*}
\begin{diagram}
\node{\mathrm{Obj} \hspace{1mm} (\mathrm{Mor}_{0} \wr \tCb)} \arrow{e} \arrow{s} \node{\mathrm{Mor} \hspace{1mm} \tCb} \arrow{s,r}{(\text{source},\text{target})} \\
\node{\mathrm{Obj} \hspace{1mm} \tCb} \arrow{e} \node{\mathrm{Obj} \hspace{1mm} \tCb \times \mathrm{Obj} \hspace{1mm} \tCb}.
\end{diagram}
\end{eqnarray*}
The right-hand map is given by restricting embeddings, and maps to the background space, to the boundary and hence is a Serre fibration. Thus the left-hand map and also $\mathrm{Obj} \hspace{1mm} (\mathrm{Mor}_{\infty} \wr \tCb) \to \mathrm{Obj} \hspace{1mm} (\tCb)$ are Serre fibrations.
\end{proof}

\begin{lemma}
\label{lemma}
$B(\mathrm{Mor}_{\infty} \wr \tCb)$ is contractible.
\end{lemma}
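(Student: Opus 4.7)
The plan is to reduce to the observation that the unstabilised category $\mathrm{Mor}_{0} \wr \tCb$ is isomorphic to an over-category with a terminal object, and then show that passing from $\mathrm{Mor}_{0}$ to $\mathrm{Mor}_{\infty}$ only replaces a contractible space by a mapping telescope of contractible spaces.

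First I would unwind the definition of the Grothendieck construction to check that $\mathrm{Mor}_{0} \wr \tCb$ is canonically isomorphic to the over-category $\tCb / (\beta_{0}, \varnothing)$: an object is a pair $((\alpha,l), m)$ with $m:(\alpha,l) \to (\beta_{0},\varnothing)$ a morphism of $\tCb$, and (using contravariance) a morphism $((\alpha,l),m) \to ((\alpha',l'),m')$ is a morphism $n:(\alpha,l) \to (\alpha',l')$ in $\tCb$ satisfying $m' \circ n = m$. The pair $((\beta_{0},\varnothing), \mathrm{id})$ is terminal in this over-category, so $B(\mathrm{Mor}_{0} \wr \tCb)$ is contractible.

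Next I would transfer this contractibility to the stabilised version. The morphism $t$ induces, by post-composition, a natural transformation $T: \mathrm{Mor}_{0} \to \mathrm{Mor}_{0}$, and hence a self-functor $T_{\ast}$ of $\mathrm{Mor}_{0} \wr \tCb$ covering the identity of $\tCb$. Using the standard identification $B(F \wr \mathcal{C}) \simeq \mathrm{hocolim}_{\mathcal{C}^{op}} F$ together with the fact that homotopy colimits commute, one obtains
\[
B(\mathrm{Mor}_{\infty} \wr \tCb) \;\simeq\; \mathrm{hocolim}_{\tCb^{op}} \mathrm{Mor}_{\infty} \;\simeq\; \mathrm{hocolim}_{n} \bigl( \mathrm{hocolim}_{\tCb^{op}} \mathrm{Mor}_{0} \bigr) \;\simeq\; \mathrm{hocolim}_{n}\, B(\mathrm{Mor}_{0} \wr \tCb),
\]
where the outer telescope is taken along the self-map $BT_{\ast}$. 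Each term of this sequential mapping telescope is contractible by the first step, and a mapping telescope of contractible spaces is contractible, so the lemma follows.

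The main subtlety, and the one step I would have to justify with care, is the interchange of homotopy colimits in the display: one must verify that the topological Grothendieck construction $F \mapsto F \wr \tCb$ is compatible with sequential homotopy colimits taken fibrewise. This can be established either by invoking a contravariant version of Thomason's theorem, or by directly constructing a level-wise homotopy equivalence between the nerve of $\mathrm{Mor}_{\infty} \wr \tCb$ and the mapping telescope of the simplicial spaces $N_{\bullet}(\mathrm{Mor}_{0} \wr \tCb)$ under $N_{\bullet}(T_{\ast})$. The compatibility is plausible because the hocolim defining $\mathrm{Mor}_{\infty}$ acts only on the fibre coordinate of the Grothendieck construction, leaving the $\tCb$-coordinate fixed.
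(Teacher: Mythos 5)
Your proposal is correct and follows essentially the same route as the paper: the paper also observes that the unstabilised construction $\mathrm{Mor}_{0} \wr \tCb$ has the terminal object $((\beta_{0},\varnothing),\mathrm{id}_{\beta_{0}})$, hence contractible classifying space, and then identifies $B(\mathrm{Mor}_{\infty} \wr \tCb)$ with the homotopy colimit of these contractible spaces along the maps induced by composing with $t$. The only difference is that you explicitly flag and justify the interchange of the fibrewise telescope with the Grothendieck construction, which the paper asserts without comment.
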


\begin{proof}
$B(\mathrm{Mor}_{\infty} \wr \tCb) \simeq \mathrm{hocolim} \hspace{1mm} B(\mathrm{Mor}_{1} \wr \tCb)$ but $B(\mathrm{Mor}_{1} \wr \tCb)$ is contractible since $\mathrm{Mor}_{1} \wr \tCb$ has a terminal object $(\beta_{0}, id_{\beta_{0}})$.
\end{proof}

\begin{theorem}
\label{malaysia}
When $X$ is simply-connected and $N$ is discrete there is a homology isomorphism
\[
f:\mathbb{Z} \times \mathbb{Z} \times \mathscr{S}_{\infty}^{\infty}(F_{(\alpha,l)};X,N)_{\ast} \to \Omega B \tCb
\]
for each $(\alpha,l) \in \mathrm{Obj} \hspace{1mm} \tCb(0)$.
\end{theorem}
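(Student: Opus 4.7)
The plan is to invoke the generalised group completion theorem (Theorem \ref{gp compl}) applied to the contravariant functor $\mathrm{Mor}_{\infty}:\tCb \to \mathrm{Spaces}$ constructed above. All three hypotheses have essentially been verified in the preceding propositions, so this final step amounts to assembling them and translating the resulting based loop space into the unbased one.

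First, I would verify the hypotheses of Theorem \ref{gp compl}. The condition that $\mathrm{Obj}(\mathrm{Mor}_{\infty} \wr \tCb) \to \mathrm{Obj}\, \tCb$ is a Serre fibration is exactly Proposition \ref{Serre fibration}. The contractibility of $B(\mathrm{Mor}_{\infty} \wr \tCb)$ is Lemma \ref{lemma}. The homological condition, that every morphism $m:(\alpha,l)\to(\beta,k)$ induces an isomorphism $\mathrm{Mor}_{\infty}(m)_{\ast}$ on integral homology, is exactly Proposition \ref{panuba}. Theorem \ref{gp compl} therefore produces, for each object $(\alpha,l) \in \mathrm{Obj}\,\tCb$, a map
\[
\mathrm{Mor}_{\infty}((\alpha,l)) \longrightarrow \Omega_{(\alpha,l)} B \tCb
\]
which is an isomorphism on integral homology.

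Next I would identify both sides for $(\alpha,l)\in \mathrm{Obj}\,\tCb(0)$. By equation (\ref{morinfty}) the left-hand side is homotopy equivalent to $\mathbb{Z}\times\mathbb{Z}\times\mathscr{S}_{\infty}^{\infty}(F_{(\alpha,l)};X,N)_{\ast}$. For the right-hand side, I need to replace the based loop space $\Omega_{(\alpha,l)} B\tCb$ by the unbased loop space $\Omega B\tCb$. Since $X$ is simply-connected the corollary in Section \ref{path-components} shows that $\pi_0 B\tCb$ is a group, and in particular acts transitively on the set of path-components of $\Omega B\tCb$ by translation; hence the component $\Omega_{(\alpha,l)} B\tCb$ is homotopy equivalent to $\Omega B\tCb$ via a self-homotopy-equivalence of $B\tCb$ given by multiplication in the $\Gamma$-space structure from Section \ref{infloopsp}. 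Composing this with the group completion map yields the desired map $f$ and preserves the property of inducing an isomorphism on homology.

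The hard part has really already been done, namely establishing Proposition \ref{panuba} (which in turn rests on the homology stability corollaries of Section \ref{homstab}) and Lemma \ref{lemma}. The only thing that requires a little care here is checking that the identification $\Omega_{(\alpha,l)}B\tCb \simeq \Omega B\tCb$ is legitimate, which I would handle by explicitly using the $H$-space structure on $B\tCb$ and the fact that $\pi_0 B\tCb$ is a group to move between base components. Once that identification is in place, the theorem follows immediately.
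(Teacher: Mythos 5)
Your proposal is correct and follows the paper's own argument: apply Theorem \ref{gp compl} using Propositions \ref{panuba}, \ref{Serre fibration} and Lemma \ref{lemma}, then identify $\mathrm{Mor}_{\infty}((\alpha,l))$ via equation (\ref{morinfty}). Your handling of the basepoint is the same in substance as the paper's remark that, since $B\tCb$ is a (grouplike, indeed infinite loop) space, all components of the loop space are homotopy equivalent.
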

\begin{proof}
By Propositions \ref{panuba} and \ref{Serre fibration} and Lemma \ref{lemma}, the conditions of Theorem \ref{gp compl} are satisfied and we can conclude that there is a homology isomorphism $Mor_{\infty}((\alpha,l)) \to \Omega B \tCb$. The theorem now follows from equation (\ref{morinfty}).
\end{proof}

\noindent
N.B. We have omitted the basepoint of the loop space from the notation. Since $B\tCb$ is an infinite loop space, all of its path-components are homotopy equivalent so it doesn't matter where we base the loops.

\begin{theorem}
\label{thailand}
Let $\mathcal{C}^{b,c}_{X,N}$ denote the full subcategory of $\tCb$ whose objects are closed $1$-manifolds. When $X$ is simply-connected and $N$ is discrete, the inclusion functor induces a weak homotopy equivalence
\[
\Omega B \C^{b,c} \to \Omega B \tCb.
\]
\end{theorem}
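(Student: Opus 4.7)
The plan is to rerun the group completion argument of Theorem \ref{malaysia} for the subcategory $\mathcal{C}^{b,c}_{X,N}$ itself and then compare the result with the original argument via the inclusion functor. The key observation is that $\mathcal{C}^{b,c}_{X,N}$ is a \emph{full} subcategory of $\tCb$, and since any closed object $(\alpha, \varnothing) \in \mathcal{C}^{b,c}_{X,N}$ has $\mathrm{R}(\alpha) = \emptyset$, the equality $\mathrm{Mor}_{\mathcal{C}^{b,c}_{X,N}}((\alpha, \varnothing), (\beta_{0}, \varnothing)) = \mathrm{Mor}_{\tCb}((\alpha, \varnothing), (\beta_{0}, \varnothing))$ holds on the nose.

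Concretely, I would introduce $\mathrm{Mor}_{\infty}^{c}: \mathcal{C}^{b,c}_{X,N} \to \mathrm{Spaces}$ in exact parallel with $\mathrm{Mor}_{\infty}$, stabilizing by the same morphism $t$ (which lies in $\mathcal{C}^{b,c}_{X,N}$ since $\beta_{0}$ is closed and $t$ introduces only windows). The analogs of Propositions \ref{panuba} and \ref{Serre fibration} and Lemma \ref{lemma} for $\mathrm{Mor}_{\infty}^{c} \wr \mathcal{C}^{b,c}_{X,N}$ go through by the same proofs, since those proofs rely only on the homology stability Corollaries \ref{surfstab1}--\ref{surfstab3} and on the existence of the terminal object $(\beta_{0}, \mathrm{id}_{\beta_{0}})$, both of which are available inside $\mathcal{C}^{b,c}_{X,N}$. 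Theorem \ref{gp compl} then supplies a homology equivalence $\mathrm{Mor}_{\infty}^{c}((\beta_{0}, \varnothing)) \to \Omega B \mathcal{C}^{b,c}_{X,N}$. By the fullness observation, the natural map $\mathrm{Mor}_{\infty}^{c}((\beta_{0}, \varnothing)) \to \mathrm{Mor}_{\infty}((\beta_{0}, \varnothing))$ induced by the inclusion is the identity, so the commutative square built from the inclusion together with the two group completion maps, combined with Theorem \ref{malaysia}, forces $\Omega B \mathcal{C}^{b,c}_{X,N} \to \Omega B \tCb$ to be an integral homology equivalence.

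To upgrade to a weak homotopy equivalence I would appeal to infinite loop space theory. The $\Gamma$-space construction of Section \ref{infloopsp} restricts to $\mathcal{C}^{b,c}_{X,N}$ verbatim (disjoint union preserves closedness of objects and the window-only free boundary of morphisms), and $\pi_{0} B \mathcal{C}^{b,c}_{X,N}$ is a group by the same argument used to show that $\pi_{0} B \tCb$ is a group, applied to the zero weighted-signature component in which $\mathcal{C}^{b,c}_{X,N}$ sits. Hence both $\Omega B \mathcal{C}^{b,c}_{X,N}$ and $\Omega B \tCb$ are infinite loop spaces and in particular simple spaces, and a homology equivalence between simple spaces is a weak homotopy equivalence. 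The main point of care, rather than a genuine obstacle, is checking that the analogs of Propositions \ref{panuba} and \ref{Serre fibration} and Lemma \ref{lemma} really do carry over inside $\mathcal{C}^{b,c}_{X,N}$; all the required cobordisms and stabilizations do live in this subcategory, so the transfer is routine once one has noted the fullness.
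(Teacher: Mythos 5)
Your proposal is correct and follows essentially the same route as the paper: the paper likewise reruns the group completion machinery of Theorem \ref{malaysia} for $\C^{b,c}$ to get a second homology equivalence out of $\mathbb{Z}\times\mathbb{Z}\times\mathscr{S}^{\infty}_{\infty}(F_{(\alpha,\varnothing)};X,N)_{\ast}$, uses the commutative triangle induced by the inclusion functor to conclude that $\Omega B\C^{b,c}\to\Omega B\tCb$ is a homology isomorphism, and upgrades to a weak homotopy equivalence because source and target are loop spaces. Your only deviation is invoking the $\Gamma$-space/infinite loop structure for the final upgrade where the paper simply uses that both sides are (simple) loop spaces, which is an inessential difference.
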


\begin{proof}
We could equally well apply the methods above to yield a homology isomorphism
\[
f':\mathbb{Z} \times \mathbb{Z} \times \mathscr{S}_{\infty}^{\infty}(F_{(\alpha,\varnothing)};X,N)_{\ast} \to \Omega B \C^{b,c}
\]
for each $(\alpha,\varnothing) \in \mathrm{Obj} \hspace{1mm} \C^{b,c} \subset \mathrm{Obj} \hspace{1mm}\tCb$.
The map induced by the inclusion functor fits into the following commutative diagram
\begin{eqnarray*}
\begin{diagram}
\node{\mathbb{Z} \times \mathbb{Z} \times \mathscr{S}_{\infty}^{\infty}(F_{(\alpha,\varnothing)};X,N)_{\ast}} \arrow{se,b}{f'} \arrow{e,t}{f} \node{\Omega B \C^{b,c}} \arrow{s}\\
\node{} \node{\Omega B \tCb \hspace{1mm}.}
\end{diagram}
\end{eqnarray*}
Since $f$ and $f'$ are homology isomorphisms, so is the vertical map and since its source and target are both loop spaces, it is a weak homotopy equivalence. 
\end{proof}

\subsection{A splitting}
\label{splittingsection}

\begin{theorem}
\label{splitting}
Suppose that $X$ is simply-connected and $N$ is discrete. Let $F$ be a connected open-closed cobordism from some closed, oriented $1$-manifold to
$S^{1}$ and let $\bar{F}$ be the ordinary cobordism obtained by gluing a
disc to each window of $F$ (see Figure \ref{occobs}). Then there is a homology isomorphism
\[
g:\mathbb{Z} \times \mathscr{S}^{\infty}_{\infty}(F;X,N)_{\ast} \to
\mathscr{S}_{\infty}(\bar{F};X)_{\ast} \times Q((BS^{1} \times N)_{+}).
\]
\end{theorem}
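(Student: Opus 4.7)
The plan is to follow the splitting scheme of \cite{Bod-Till}, adapted to accommodate the background maps and the open-closed structure.

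First I would define $g$ as a product $(g_{1}, g_{2})$. The first component $g_{1}: \mathbb{Z} \times \mathscr{S}^{\infty}_{\infty}(F;X,N)_{\ast} \to \mathscr{S}_{\infty}(\bar{F};X)_{\ast}$ is the ``cap-off'' map: glue a $2$-disc $D_{i}$ to each window of $F$ and extend the given map $\phi : F \to X$ over each $D_{i}$. This extension exists and is unique up to homotopy because $\phi$ is constant on each window (as $N$ is discrete) and $X$ is simply-connected; it is compatible with the stabilization maps used to form $\mathscr{S}^{\infty}_{\infty}$. The second component $g_{2}: \mathbb{Z} \times \mathscr{S}^{\infty}_{\infty}(F;X,N)_{\ast} \to Q((BS^{1}\times N)_{+})$ is a ``window-data'' map: each window carries an isotopy class of parametrization, classified by $B\mathrm{Diff}^{+}(S^{1}) \simeq BS^{1}$, together with its constant label in $N$; the unordered collection of $w$ labeled points thus assembles to a point of $E\Sigma_{w}\times_{\Sigma_{w}}(BS^{1}\times N)^{w}$, which is sent to $Q((BS^{1}\times N)_{+})$ by the standard scanning map. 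The $\mathbb{Z}$ factor on the source is used to absorb the discrete component shift.

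To verify that $g$ is a homology isomorphism, I would first establish, in the stable range, the weak equivalence
\[
\mathscr{S}(F^{w}_{g};X,N)_{\ast} \;\simeq\; E\mathrm{Diff}^{+}(\bar{F}_{g})\times_{\mathrm{Diff}^{+}(\bar{F}_{g})} \Big[ \mathrm{Map}(\bar{F}_{g};X)_{\ast}\times \mathrm{Conf}_{w}(\bar{F}_{g})\times_{\Sigma_{w}}(BS^{1}\times N)^{w} \Big],
\]
where $F^{w}_{g}$ denotes the genus-$g$ open-closed cobordism with $w$ windows having the prescribed boundary. This is derived from the short exact sequence of topological groups
\[
1 \to \mathrm{Diff}^{+}_{\mathrm{fr},w}(\bar{F}_{g}) \to \mathrm{Diff}^{+}_{\mathrm{oc}}(F^{w}_{g}) \to (\mathrm{Diff}^{+}(S^{1}))^{w} \rtimes \Sigma_{w} \to 1
\]
(with $\mathrm{Diff}^{+}_{\mathrm{fr},w}$ the diffeomorphisms of $\bar{F}_{g}$ fixing $w$ points together with tangent frames), combined with the decomposition of $\mathrm{Map}(F,\partial_{f}F; X,N)_{\ast}$ by window labels (clean since $N$ is discrete), and the identification, using simply-connectedness of $X$, of this map-space with $\mathrm{Map}(\bar{F}_{g};X)_{\ast}$ up to the label data.

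Then I would pass to the limit $g, w \to \infty$. The homology-stability Corollaries \ref{surfstab1}, \ref{surfstab2}, and \ref{surfstab3} ensure that the base $E\mathrm{Diff}^{+}(\bar{F}_{g})\times_{\mathrm{Diff}^{+}(\bar{F}_{g})} \mathrm{Map}(\bar{F}_{g};X)_{\ast}$ stabilizes homologically to $\mathscr{S}_{\infty}(\bar{F};X)_{\ast}$ and that the configuration-part of the fiber stabilizes independently. A McDuff--Segal-type scanning argument, applied in the genus-stable limit (where the ambient surface effectively behaves ``infinite-dimensionally''), group completes the labeled-configuration monoid $\coprod_{w}\mathrm{Conf}_{w}(\bar{F}_{g})\times_{\Sigma_{w}}(BS^{1}\times N)^{w}$ into $Q((BS^{1}\times N)_{+})$. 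Combining these two stable limits, with the $\mathbb{Z}$ factor on the source providing the required group-completion, yields the asserted homology isomorphism.

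The hardest step will be the fibration-style identification in the stable range: one must simultaneously handle the $\mathrm{Diff}^{+}(S^{1})$-rotations of windows (producing the $BS^{1}$ factors), the $\Sigma_{w}$-permutations of windows, and the extension of the background maps over the capping discs. A secondary technical point is verifying that the scanning argument delivers a homology isomorphism all the way to $Q((BS^{1}\times N)_{+})$ rather than into a partial section space; this is where Corollaries \ref{surfstab1}--\ref{surfstab3} do the essential work of transporting local homological information into the stable range.
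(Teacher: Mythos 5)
Your maps $g_{1}$ (cap off the windows) and $g_{2}$ (record the window data) are the same ones the paper uses, but the central step of your argument does not hold as stated. The claimed ``weak equivalence in the stable range''
\[
\mathscr{S}(F^{w}_{g};X,N)_{\ast}\;\simeq\;E\mathrm{Diff}^{+}(\bar{F}_{g})\times_{\mathrm{Diff}^{+}(\bar{F}_{g})}\Bigl[\mathrm{Map}(\bar{F}_{g};X)_{\ast}\times \mathrm{Conf}_{w}(\bar{F}_{g})\times_{\Sigma_{w}}(BS^{1}\times N)^{w}\Bigr]
\]
is false as an unstable equivalence and is not derivable from the extension $1\to\mathrm{Diff}^{\wedge}\to\mathrm{Diff}^{+}_{\mathrm{oc}}\to\Sigma_{w}\wr S^{1}\to 1$ plus the map-space decomposition, because that extension is \emph{not} (even homotopically) split: already for $w=1$, $N=X=\ast$, the moduli space of a surface with one window is $B\Gamma^{1}_{g,n}$ (the window group is $\Gamma_{g,n+1}$ modulo the boundary Dehn twist, with contractible identity component), sitting in the nontrivial fibration $B\Gamma_{g,n+1}\to B\Gamma^{1}_{g,n}\to BS^{1}$ classified by the central extension generated by the boundary twist, whereas your right-hand side is $B\Gamma^{1}_{g,n}\times BS^{1}$. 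Turning such a statement into a \emph{stable homology} isomorphism is exactly where the content of the theorem lies, and it requires a stability input for gluing discs onto boundary circles. The paper does this by mapping the restriction-to-windows fibration $\mathscr{S}(F';X)_{\ast}\to\mathscr{S}(F;X,N)_{\ast}\to E(\Sigma_{w}\wr S^{1})\times_{\Sigma_{w}\wr S^{1}}(LN)^{w}$ to the trivial product fibration with fiber $\mathscr{S}(\bar{F};X)_{\ast}$, applying Theorem \ref{desaru} on the fibers and the Zeeman comparison theorem. Your proposal never invokes Theorem \ref{desaru} (or any equivalent capping stability); Corollaries \ref{surfstab1}--\ref{surfstab3} concern stabilization of the open-closed moduli spaces themselves and say nothing about the comparison between $F'$ (windows as boundary) and $\bar{F}$ (windows capped), so they cannot substitute for it.

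The second gap is the passage to $Q((BS^{1}\times N)_{+})$. The window data is not a configuration of points located in $\bar{F}_{g}$: restriction to the windows lands in the abstract labelled-set space $E(\Sigma_{w}\wr S^{1})\times_{\Sigma_{w}\wr S^{1}}(LN)^{w}\simeq E\Sigma_{w}\times_{\Sigma_{w}}(BS^{1}\times N)^{w}$, and the correct conclusion follows directly from group completion and the Barratt--Priddy--Quillen theorem. Your McDuff--Segal scanning of labelled configurations in $\bar{F}_{g}$ would produce a (compactly supported) section space over the surface rather than $Q((BS^{1}\times N)_{+})$, and the appeal to the surface ``behaving infinite-dimensionally'' in the genus limit is not an argument; this whole detour is an artefact of having placed the windows as points in the surface in the first place.
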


\begin{proof}
Let $F'$ be the ordinary cobordism that is homeomorphic to $F$ as a surface and has $\partial_{out}F'=S^{1}$ (see Figure \ref{occobs}).

\begin{figure}[t]
\begin{center}
\includegraphics{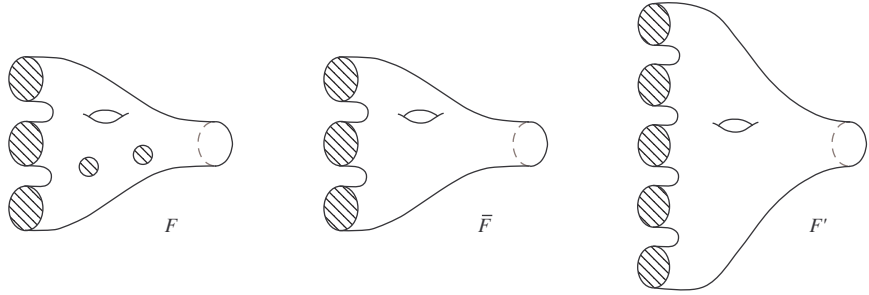}
\end{center}
\caption{The surfaces $F$, $\bar{F}$ and $F'$}
\label{occobs}
\end{figure}

Restriction to the windows yields a fibration
\begin{eqnarray*}
&& E\mathrm{Diff}_{\mathrm{oc}}^{+}(F) \times_{\mathrm{Diff}_{\mathrm{oc}}^{+}(F)} \mathrm{Map}(F,\partial_{f} F;X,N)_{\ast} \\
&& \hspace{2cm} \to E\mathrm{Diff}^{+}(\partial_{f} F) \times_{\mathrm{Diff}^{+}(\partial_{f}F)} \mathrm{Map}(\partial_{f}F;N)
\end{eqnarray*}
with fiber
\[
E\mathrm{Diff}_{\mathrm{oc}}^{+}(F') \times_{\mathrm{Diff}_{\mathrm{oc}}^{+}(F')} \mathrm{Map}(F',\partial F;X,\ast).
\]
Using the fact that $\mathrm{Diff}^{+}(\coprod_{w}S^{1}) \simeq \Sigma_{w} \wr S^{1}$ we see that this fibration is (homotopy equivalent to)
\begin{equation}
\label{fibration}
\mathscr{S}(F';X)_{\ast} \to \mathscr{S}(F;X,N)_{\ast} \to E(\Sigma_{w} \wr S^{1}) \times_{\Sigma_{w} \wr S^{1}} (LN)^{w}.
\end{equation}
There is a map from here to the trivial fibration
\[
\mathscr{S}(\bar{F};X)_{\ast} \to \mathscr{S}(\bar{F};X)_{\ast} \times E(\Sigma_{w} \wr S^{1}) \times_{\Sigma_{w} \wr S^{1}} (LN)^{w} \to E(\Sigma_{w} \wr S^{1}) \times_{\Sigma_{w} \wr S^{1}} (LN)^{w}.
\]
On the base space this map is the identity. On the total space the map into the first factor is induced by the inclusion $F \to \bar{F}$; we are able to extend diffeomorphisms over the glued-in discs, at least up to homotopy and we are able to extend maps $(F, \partial_{f}F) \to (X,N)$ over the glued-in discs because any such map must be constant on each window (using the fact that $N$ is discrete). The map into the second factor is given by restriction to windows, as in the previous fibration. On the fibers, the map is given by attaching discs to the extra boundary components and extending diffeomorphisms by the identity and maps to $X$ by $c_{\ast}$, the constant map to the basepoint.

By Theorem \ref{desaru} the map on the fibers is an isomorphism on $H_{s}$ when the genus of $F$ is at least $2s+4$. Thus by the Zeeman comparison theorem we conclude that the map of total spaces is an isomorphism on homology in the same range. Taking limits we get a homology isomorphism
\[
\mathscr{S}_{\infty}^{\infty}(F;X,N)_{\ast} \to \mathscr{S}_{\infty}(\bar{F};X)_{\ast} \times \lim_{w \to \infty}E(\Sigma_{w} \wr S^{1}) \times_{\Sigma_{w} \wr S^{1}} (LN)^{w}.
\]
To conclude, note that since $N$ is discrete, 
\begin{eqnarray*}
E(\Sigma_{w} \wr S^{1}) \times_{\Sigma_{w} \wr S^{1}} (LN)^{w} & \simeq & E\Sigma_{w} \times_{\Sigma_{w}} (ES^1 \times_{S^1} LN)^{w} \\
& \simeq & E\Sigma_{w} \times_{\Sigma_{w}} (BS^1 \times N)^{w}
\end{eqnarray*}
and recall that by the group completion theorem and the Barratt-Priddy-Quillen theorem, there is a homology isomorphism
\[
\mathbb{Z} \times \lim_{w \to \infty}E\Sigma_{w} \times_{\Sigma_{w}} (BS^1 \times N)^{w} \to Q((BS^1 \times N)_{+}).
\]
\end{proof}

The following theorem is relevant.
\begin{theorem}[\cite{Ralph-Ib}, Theorem 1]
\label{lalala}
Suppose that $X$ is simply-connected. For any ordinary, connected cobordism $\bar{F}$ there is a homology isomorphism
\[
h:\mathbb{Z} \times \mathscr{S}_{\infty}(\bar{F};X)_{\ast} \simeq \Omega^{\infty}(MTSO(2) \wedge X_{+}).
\]
\end{theorem}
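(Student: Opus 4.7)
The plan is to adapt the Madsen-Weiss / Madsen-Tillmann approach to surface bundles equipped with a map to $X$. First I would introduce the ordinary $2$-dimensional oriented cobordism category with background space, $\mathcal{C}^{+}_{2}(X)$, which arises as the special case $N=\emptyset$ of $\C$. A parameterized Pontryagin-Thom construction applied to the vertical tangent bundle of a surface bundle together with its map to $X$ produces a natural map
\[
\alpha_{X} : B\mathcal{C}^{+}_{2}(X) \to \Omega^{\infty-1}(MTSO(2) \wedge X_{+}),
\]
essentially the Madsen-Tillmann map of \cite{MadTill} with a smash factor of $X_{+}$ inserted.

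Next I would identify $\Omega B \mathcal{C}^{+}_{2}(X)$ with $\mathbb{Z} \times \mathscr{S}_{\infty}(\bar F; X)_{\ast}$ by a group-completion argument entirely analogous to the one in Section \ref{gpcomp}. Define a contravariant functor $\mathrm{Mor}_{\infty}$ on $\mathcal{C}^{+}_{2}(X)$ given by morphism spaces into a fixed circle, stabilized by gluing on tori with two holes. Theorem \ref{desaru} implies that every morphism in $\mathcal{C}^{+}_{2}(X)$ acts on $\mathrm{Mor}_{\infty}$ by an integral homology isomorphism (the cases with constant map to $X$ and with cylinder underlying cobordism being handled as in Proposition \ref{panuba}); the Serre fibration condition holds by restriction to the boundary exactly as in Proposition \ref{Serre fibration}; and $B(\mathrm{Mor}_{\infty} \wr \mathcal{C}^{+}_{2}(X))$ is contractible because the colimit has a terminal object at the chosen circle. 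Theorem \ref{gp compl} then yields the homology equivalence $\mathbb{Z} \times \mathscr{S}_{\infty}(\bar F; X)_{\ast} \simeq \Omega B \mathcal{C}^{+}_{2}(X)$.

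The third and hardest step is to show that $\alpha_{X}$ becomes a weak homotopy equivalence after looping, identifying $\Omega B \mathcal{C}^{+}_{2}(X)$ with $\Omega^{\infty}(MTSO(2) \wedge X_{+})$. I would follow the sheaf-theoretic scanning argument of \cite{Madsen-Weiss} and \cite{4author}: replace $B\mathcal{C}^{+}_{2}(X)$ by a suitable space of surfaces properly embedded in $\R^{\infty} \times \R$ carrying a map to $X$, apply scanning to reduce to a local problem on tangent planes, and recognize the result at level $n$ as the Thom space of $\gamma_{n}^{\perp}$ smashed with $X_{+}$. The target space $X$ propagates through as an extra $\wedge X_{+}$ smash factor at every stage because it enters only as the target of a continuously-varying map and never constrains the geometry of the embedded surfaces. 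The principal obstacle is this scanning step, whose proof requires the full sheaf-theoretic machinery of Madsen-Weiss and whose length is not shortened by the presence of $X$; by contrast, the twisted stability input (Theorem \ref{india}) and the Pontryagin-Thom construction of $\alpha_{X}$ are essentially formal once the surface-bundle-with-map-to-$X$ formalism is in place.
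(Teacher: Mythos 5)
First, a point of calibration: the paper does not prove Theorem \ref{lalala} at all -- it is quoted from \cite{Ralph-Ib} (Theorem 1) -- and the proof in that reference follows essentially the strategy you outline: twisted homological stability, a group-completion argument, and the identification of the classifying space of the oriented surface category with background space coming from \cite{4author}. So your overall route is the intended one, but two points need attention, one of which is a genuine gap.

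The gap is in your second step. In the full category $\mathcal{C}^{+}_{2}(X)$ a morphism into the fixed circle $\beta_{0}$ need not be connected: its underlying cobordism can contain closed components, and components with only incoming boundary, since only one outgoing circle is prescribed. Consequently $\mathrm{Mor}_{0}(\alpha)$ is not a disjoint union of spaces $\mathscr{S}(F;X)_{\ast}$ indexed by the genus of a connected $F$; stabilizing by gluing tori onto the outgoing circle only affects the component containing $\beta_{0}$, so $\mathrm{Mor}_{\infty}(\alpha)$ does not have the homotopy type $\mathbb{Z}\times\mathscr{S}_{\infty}(\bar F;X)_{\ast}$, and Theorem \ref{desaru} (which concerns connected cobordisms and connected stabilizations) does not show that every morphism acts by a homology isomorphism -- a morphism containing, say, a disjoint closed component mapping to $X$ is simply not covered. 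This is exactly why the present paper, like \cite{Tillmann} and \cite{Ralph-Ib}, runs the group completion in a positive-boundary subcategory, where a morphism to a circle is forced to be connected; if you do that, you must then add the separate (nontrivial) input that the inclusion of the positive-boundary subcategory into $\mathcal{C}^{+}_{2}(X)$ induces an equivalence on loops of classifying spaces, which is the positive-boundary theorem of \cite{4author}. Secondly, your third step is not actually an obstacle: \cite{4author} already proves $B\mathcal{C}_{\theta}\simeq\Omega^{\infty-1}MT\theta$ for arbitrary tangential structures $\theta$, and an orientation together with a map to $X$ is such a structure with Thom spectrum $MTSO(2)\wedge X_{+}$; so $\Omega B\mathcal{C}^{+}_{2}(X)\simeq\Omega^{\infty}(MTSO(2)\wedge X_{+})$ can simply be quoted (as this paper does in Section \ref{infloopmaps}) rather than rerunning the Madsen--Weiss scanning machinery with $X$ carried along.
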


Note that in the case of general $N$, we still have the fibration (\ref{fibration}) but we don't have a map to the trivial fibration. Perhaps this fibration could be useful in studying $\mathscr{S}(F;X,N)_{\ast}$ for arbitrary $N$.

\subsection{Maps to infinite loop spaces}
\label{infloopmaps}

Let $\C^{b,c}$ denote the full subcategory of $\tCb$ whose objects are
the closed $1$-manifolds. Theorem \ref{thailand} states that
when $X$ is simply-connected and $N$ is discrete, the inclusion
functor induces a weak homotopy equivalence $\Omega B \C^{b,c} \to \Omega B
\tCb$. We now give two maps from $\Omega B \C^{b,c}$ into infinite
loop spaces.

\subsubsection{The map $a: \Omega B \C^{b,c} \to \Omega^{\infty}(MTSO(2) \wedge X_{+})$}

There is a functor $\alpha:\C^{b,c} \to \mathcal{C}^{+}_{2}(X)$ where
$\mathcal{C}^{+}_{2}(X)$ is the ordinary $2$-dimensional cobordism category
with background space $X$, studied in \cite{4author}. These two
categories have a common object set and $\alpha$ is the identity on
the objects.

Given a morphism $[h, \phi]$ in $\C^{b,c}$ with
underlying cobordism $F$, define $\alpha([h, \phi])=[\bar{h},
\bar{\phi}]$ with underlying cobordism $\bar{F}$ where $\bar{F}$ is obtained from $F$ by gluing discs over the windows and $\bar{h}$, $\bar{\phi}$ are obtained from $h$, $\phi$ by extension over the discs. Since the space $\mathrm{Emb}(\bar{F}, \mathbb{R}^{\infty}\times [0,1])$ is contractible, we can make a continuous choice of extension $\bar{h}$. Also, because $N$ is discrete, the map $\phi:(F,\partial_{f}F) \to (X,N)$ must be constant on each window so there is a canonical way to define the extension $\bar{\phi}$.

After applying $\Omega B$, the functor $\alpha$ yields the map $a$
since $\Omega B \mathcal{C}^{+}_{2}(X)$ is homotopy equivalent to
$\Omega^{\infty}(MTSO(2) \wedge X_{+})$ by
\cite{4author}.

\subsubsection{The map $b:\Omega B \C^{b,c} \to Q((BS^{1} \times
N)_{+})$}

By the Barratt-Priddy-Quillen theorem, the target space is homotopy
equivalent to
\begin{eqnarray*}
\Omega B \coprod_{n \geq 0} E\Sigma_{n} \times_{\Sigma_{n}} (BS^{1}
\times N)^{n}.
\end{eqnarray*}
But the $n$th space here is homotopy equivalent to
\[
E\mathrm{Diff}^{+}(\amalg_{n} S^{1}) \times_{\mathrm{Diff}^{+}(\amalg_{n}S^{1})}
\mathrm{Map}(\amalg_{n}S^{1}, N)
\]
using the facts that $N$ is discrete and $\mathrm{Diff}^{+}(\amalg_{n}S^{1})
\simeq \Sigma_{n} \wr S^{1}$.

Thus to define the map $b: \Omega B \C^{b,c} \to Q((BS^{1} \times
N)_{+})$ it suffices to give a functor
\[
\beta: \C^{b,c} \to \coprod_{n \geq 0} E\mathrm{Diff}^{+}(\amalg_{n} S^{1})
\times_{\mathrm{Diff}^{+}(\amalg_{n}S^{1})} \mathrm{Map}(\amalg_{n}S^{1}, N).
\]

Such a functor is given by sending every object in $\C^{b,c}$ to the
unique object in the monoid and is given on the morphisms by
\[
\beta([h, \phi]) = [h \vert_{wind}, \phi \vert_{wind}]
\]
where $\vert_{wind}$ means restriction to the windows of the
underlying open-closed cobordism.

\subsection{Proof of the main theorem}
\label{pfmainthm}

\begin{theorem}
\label{bigtheorem}
When $X$ is simply-connected and $N$ is discrete there is a weak homotopy equivalence
\[
\Omega B \tCb \simeq \Omega^{\infty}(MTSO(2) \wedge X_{+}) \times Q((BS^{1} \times N)_{+}).
\]
\end{theorem}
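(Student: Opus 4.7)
The plan is to construct the asserted weak equivalence as the composition
\[
\Omega B \tCb \xleftarrow{\,\sim\,} \Omega B \C^{b,c} \xrightarrow{\,a \times b\,} \Omega^{\infty}(MTSO(2) \wedge X_{+}) \times Q((BS^{1} \times N)_{+}),
\]
where the left arrow is the weak equivalence of Theorem \ref{thailand} and $a$, $b$ are the infinite-loop-space maps built in Section \ref{infloopmaps}. Since source and target are both infinite loop spaces, in particular simple spaces with group-like $\pi_{0}$, it suffices to prove that $a \times b$ induces an isomorphism on integral homology.

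I would do this by fixing an object $(\alpha,l) \in \mathrm{Obj}\,\tCb(0)$, a connected open-closed cobordism $F$ from the underlying $1$-manifold of $\alpha$ to $S^{1}$, and the capped-off cobordism $\bar{F}$, and then assembling the square
\begin{eqnarray*}
\begin{diagram}
\node{\mathbb{Z} \times \mathbb{Z} \times \mathscr{S}_{\infty}^{\infty}(F;X,N)_{\ast}} \arrow{e,t}{f} \arrow{s,l}{\mathrm{id} \times g} \node{\Omega B \tCb} \arrow{s,r}{a \times b} \\
\node{\mathbb{Z} \times \mathscr{S}_{\infty}(\bar{F};X)_{\ast} \times Q((BS^{1} \times N)_{+})} \arrow{e,t}{h \times \mathrm{id}} \node{\Omega^{\infty}(MTSO(2) \wedge X_{+}) \times Q((BS^{1} \times N)_{+})}
\end{diagram}
\end{eqnarray*}
in which $f$ comes from Theorem \ref{malaysia}, $g$ from Theorem \ref{splitting} and $h$ from Theorem \ref{lalala}. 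All three of $f$, $\mathrm{id} \times g$ and $h \times \mathrm{id}$ are homology isomorphisms, so once the square is shown to commute up to homotopy, $a \times b$ is a homology isomorphism as well.

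The principal obstacle, and the step where real work is needed, is verifying that this square commutes up to homotopy, which I would argue factorwise. For the $\Omega^{\infty}(MTSO(2) \wedge X_{+})$ factor, $a$ is induced by the disc-capping functor $\alpha : \C^{b,c} \to \mathcal{C}_{2}^{+}(X)$, while the corresponding projection of $g$ is built in the proof of Theorem \ref{splitting} from exactly the same geometric operation of including $F$ into $\bar{F}$ (using that $N$ is discrete to extend maps canonically over each new disc); naturality of the Madsen--Tillmann identification of \cite{4author} then matches this with $h$. For the $Q((BS^{1} \times N)_{+})$ factor, $b$ is induced by the restriction-to-windows functor $\beta$, and this coincides with the base space of the fibration~(\ref{fibration}) that is the source of the second-factor projection of $g$; the Barratt--Priddy--Quillen identification used at the end of the proof of Theorem \ref{splitting} is the same as the one used to define $b$.

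Once commutativity is established, $a \times b$ is a homology isomorphism between simple infinite loop spaces, hence a weak homotopy equivalence, completing the proof.
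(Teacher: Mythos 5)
Your proposal is correct and takes essentially the same route as the paper: reduce via Theorem \ref{thailand} to showing $a\times b$ is a homology isomorphism between loop spaces, using the commuting square assembled from the maps of Theorems \ref{malaysia}, \ref{splitting} and \ref{lalala} (the paper's square uses $f'$ landing in $\Omega B\C^{b,c}$, where $a\times b$ is actually defined, rather than $f$ into $\Omega B\tCb$ as in your diagram, but that is only a labeling slip given your opening reduction). Your factorwise sketch of why the square commutes supplies a verification the paper merely asserts.
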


\begin{proof}
Let $F$ be any connected open-closed cobordism with $\partial_{out}F=S^{1}$ and let $\bar{F}$ denote the ordinary cobordism obtained by gluing discs over the windows of $F$. The maps $a,b,f',g,h$ from Sections \ref{infloopmaps}, \ref{gpcomp} and \ref{splittingsection} fit into the following commutative diagram
\begin{eqnarray*}
\begin{diagram}
\node{\mathbb{Z} \times \mathbb{Z} \times \mathscr{S}^{\infty}_{\infty}(F;X,N)_{\ast}} \arrow{s,l}{f'} \arrow{e,t}{g} \node{\mathbb{Z} \times \mathscr{S}_{\infty}(\bar{F};X)_{\ast} \times Q((BS^{1} \times N)_{+})} \arrow{s,r}{h \times id} \\
\node{\Omega B \C^{b,c}} \arrow{e,b}{a \times b} \node{\Omega^{\infty}(MTSO(2) \wedge X_{+}) \times Q((BS^{1} \times N)_{+})}
\end{diagram}
\end{eqnarray*}
Theorems \ref{malaysia}, \ref{splitting} and \ref{lalala} tell us that the maps $f'$, $g$ and $h$ are homology isomorphisms. Thus $a \times b$ is a homology isomorphism and since the target and source are loop spaces, it is a weak homotopy equivalence. Together with Theorem \ref{thailand}, this proves the theorem.
\end{proof}

Since $B\tCb$ is an infinite loop space, its path-components are all homotopy equivalent. Theorem \ref{bigtheorem} identifies $\Omega B \tCb$ and by delooping, we know the homotopy type of any particular path-component of $B\tCb$. Since we also know $\pi_{0} B\tCb$, we have completely determined $B\tCb$.

We end by noting that as a special case of Theorem \ref{bigtheorem} we have reproved the main result of \cite{BCR}. In that paper the authors studied a $2$-category $\mathscr{S}^{\mathrm{oc}}_{B}$ in which the objects are oriented $1$-manifolds with their boundary points labeled by elements of $B$, the morphisms are oriented open-closed cobordisms with each free boundary component labeled by an element of $B$ and the $2$-morphisms are diffeomorphisms of cobordisms which respect the labels. In the case that $X$ is contractible and $N$ is discrete, $\tCb$ is an embedded surface version of $\mathscr{S}^{\mathrm{oc}}_{B}$ and Theorem \ref{bigtheorem} agrees with the calculation of $\Omega B \mathscr{S}^{\mathrm{oc}}_{B}$ in \cite{BCR}.

\section{Appendix: Homology stability proofs}

In this section we will prove Theorems \ref{homstab1}, \ref{homstab2} and \ref{homstab3}. We will make use of the following generalized form of the Serre spectral sequence.
\begin{theorem}[Special case of the main theorem in \cite{MS}]
Let $F \to E \to B$ be a fibration and $M$ a $\pi_{1}(E)$-module. There is an action of $\pi_{1}(B)$ on $H_{\ast}(F,M)$ and a Serre spectral sequence
\[
E^{2}_{p,q}=H_{p}(B,H_{q}(F,M)) \Rrightarrow H_{p+q}(E,M).
\]
\end{theorem}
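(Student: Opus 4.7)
The plan is to obtain this as a direct specialization of the main theorem of \cite{MS}, which constructs a generalized Serre-type spectral sequence for any fibration with coefficients in a local coefficient system on the total space. The only work needed is to recast the given data in the required form and identify the $E^{2}$-term.

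First, I would observe that since $E$ is path-connected, a $\pi_{1}(E)$-module $M$ is equivalent to a local coefficient system $\mathcal{M}$ on $E$; its restriction to any fiber $F$ is a local system making sense of the homology groups $H_{q}(F;M)$, with the $\pi_{1}(F)$-action inherited from the $\pi_{1}(E)$-action via the map $\pi_{1}(F) \to \pi_{1}(E)$ coming from inclusion.

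Second, and this is where the main geometric input enters, I would construct the $\pi_{1}(B)$-action on $H_{q}(F;M)$. Given a loop $\gamma$ in $B$ based at $b_{0}$, the homotopy lifting property produces a self-homotopy-equivalence of the fiber $F_{b_{0}}$, well-defined up to homotopy. To promote this to an action on $F$-homology with coefficients in $\mathcal{M}|_{F}$, one picks a lift of $\gamma$ to a path in $E$ (not necessarily a loop) and uses it to parallel-transport $\mathcal{M}|_{F}$ along the fiber self-equivalence. Different lifts differ by a loop in $F$, and the resulting ambiguity is exactly absorbed by the $\pi_{1}(F)$-action on $M$. A short check shows this yields a well-defined $\pi_{1}(B)$-action that is functorial in the loop.

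Third, I would invoke \cite{MS}: the construction there, which can be implemented either by filtering $E$ by preimages of a CW-filtration of $B$ or by working simplicially with the action groupoid of $\pi_{1}(B)$ on the homotopy fiber, produces a spectral sequence converging to $H_{*}(E;M)$ whose $E^{2}$-page is $H_{p}(B; H_{q}(F;M))$ with coefficients in the local system just constructed. I expect the main technical obstacle to be verifying that the implicit monodromy action built into the Moerdijk–Svensson filtration agrees with the explicit $\pi_{1}(B)$-action described in the previous paragraph; this is essentially a naturality check, but it requires unpacking their groupoid formalism to see that both actions ultimately come from the same lifting procedure.
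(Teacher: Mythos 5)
Your proposal takes essentially the same route as the paper: the paper gives no argument of its own, stating the result purely as a special case of the main theorem of \cite{MS}, which is precisely the specialization you describe. The extra details you supply (interpreting the $\pi_{1}(E)$-module as a local system on $E$, constructing the $\pi_{1}(B)$-action on $H_{q}(F;M)$ via lifts, and identifying the $E^{2}$-term) are consistent with the standard account that the citation presupposes.
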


\vsp
The notation continues as in section \ref{homstab}. We use $F^{k}_{g,r}$ to denote the surface $F_{g,r}$ with $k$ marked points. 
\begin{lemma}
\label{Noah}
Suppose $X$ is simply-connected and $N$ is discrete. Let $F$ be a connected open-closed cobordism of genus $g$, with $w$ windows and a total of $n+w$ boundary components. There is an isomorphism of $\Gamma_{g,n+w}$-modules
\[
H_{r}(\mathrm{Map}(F, \partial_{f}F; X,N)_{\ast}) \cong \bigoplus_{N^{w}} H_{r}(\mathrm{Map}(F_{g,n+w},\partial F_{g,n+w}; X,\ast))
\]
where the $\Gamma_{g,n+w}$-action on the left-hand side is via the natural inclusion $\Gamma_{g,n+w} \to \Gamma_{\mathrm{oc}}(F)$.
\end{lemma}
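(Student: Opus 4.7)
The plan is to exploit two features separately: discreteness of $N$ to decompose $\mathrm{Map}(F,\partial_f F; X, N)_\ast$ into path-components labelled by the values on windows, and simple-connectedness of $X$ to identify each component with the basepoint summand $\mathrm{Map}(F_{g,n+w}, \partial F_{g,n+w}; X, \ast)$. The free boundary $\partial_f F$ has two kinds of components: the $w$ windows (closed circles), and arcs with endpoints on $\partial_{in}F \cup \partial_{out}F$. Since $N$ is discrete and every map in $\mathrm{Map}(F,\partial_f F; X,N)_\ast$ sends $\partial_{in}F \cup \partial_{out}F$ to the single point $\ast$, continuity forces the arc components to map to $\ast$ as well; the windows, being disjoint from $\partial_{in}F \cup \partial_{out}F$, can independently be sent to any point of $N$. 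Writing $W_1,\ldots,W_w$ for the windows and $B := \partial_{in}F \cup \partial_{out}F \cup (\text{arcs of }\partial_f F)$, this gives a decomposition as a topological disjoint union (disjoint because $N$ is discrete)
\[
\mathrm{Map}(F,\partial_f F; X,N)_\ast \;=\; \coprod_{\lambda \in N^w} \mathrm{Map}_\lambda,
\]
where $\mathrm{Map}_\lambda$ consists of maps sending $B$ to $\ast$ and $W_i$ to $\lambda_i$. Taking $H_r$ turns this into a direct sum indexed by $N^w$.

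To identify each summand, I would use the restriction-to-windows fibration
\[
\mathrm{Map}_0 \longrightarrow \widetilde{\mathrm{Map}} \longrightarrow \mathrm{Map}(S^1, X)^w,
\]
where $\widetilde{\mathrm{Map}}$ is the space of maps $F \to X$ sending $B$ to $\ast$ (with no constraint on windows), and $\mathrm{Map}_0 := \mathrm{Map}(F_{g,n+w}, \partial F_{g,n+w}; X, \ast)$ is the fiber over $(c_\ast,\ldots,c_\ast)$; the fiber over $(c_{\lambda_1},\ldots,c_{\lambda_w})$ is exactly $\mathrm{Map}_\lambda$. Since $X$ is simply-connected, $\mathrm{Map}(S^1, X)$ is path-connected, so all such fibers lie in one component of the base and $\mathrm{Map}_\lambda \simeq \mathrm{Map}_0$. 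Concretely, a homotopy equivalence $\mathrm{Map}_0 \to \mathrm{Map}_\lambda$ is obtained by picking collars $W_i \times [0,1] \hookrightarrow F$ and paths $\gamma_i \colon [0,1] \to X$ from $\ast$ to $\lambda_i$, and inserting $\gamma_i$ into the $i$-th collar of each given map.

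The group $\Gamma_{g,n+w}$ consists of mapping classes fixing $\partial F$ pointwise, so it preserves $B$ and each window individually. Consequently the action on $\mathrm{Map}(F,\partial_f F; X, N)_\ast$ preserves each summand $\mathrm{Map}_\lambda$ and is trivial on the index set $N^w$, giving a $\Gamma_{g,n+w}$-equivariant direct sum decomposition on homology. The restriction fibration above is $\Gamma_{g,n+w}$-equivariant with trivial action on the base, so parallel transport induces a $\Gamma_{g,n+w}$-equivariant isomorphism $H_r(\mathrm{Map}_\lambda) \cong H_r(\mathrm{Map}_0)$. The main delicate point is ensuring equivariance of the explicit model for the equivalence: one must choose the collars of the windows to be preserved by representatives of $\Gamma_{g,n+w}$ (possible since any mapping class fixing $\partial F$ pointwise has a representative that is the identity on a small collar), after which the insertion-of-$\gamma_i$ construction is equivariant on the nose and yields the claimed $\Gamma_{g,n+w}$-module isomorphism.
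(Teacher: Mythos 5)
Your proof is correct and follows essentially the same route as the paper: discreteness of $N$ gives the decomposition of the mapping space indexed by the window values in $N^{w}$, and path-connectedness of $X$ (via path/collar insertion, which the paper phrases through the marked-point model $F^{w}_{g,n}$) identifies each piece with $\mathrm{Map}(F_{g,n+w},\partial F_{g,n+w};X,\ast)$ equivariantly. Your explicit treatment of the $\Gamma_{g,n+w}$-equivariance of the path-insertion equivalence is if anything more careful than the paper's one-line assertion that the identifications are equivariant.
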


\begin{proof}
Because $N$ is discrete, any $f \in \mathrm{Map}(F, \partial_{f}F; X,N)_{\ast}$ must be constant on each window and map the free boundary intervals to the basepoint so
\[
\mathrm{Map}(F, \partial_{f}F; X,N)_{\ast} = \mathrm{Map}(F^{w}_{g,n}, pts; X,N)_{\ast}
\]
where the subscript $\ast$ on the right-hand side indicates that $\partial F_{g,n}^{w}$ must map to the basepoint. If $f, f'$ lie in the same path-component of the mapping space $\mathrm{Map}(F^{w}_{g,n}, pts; X,N)_{\ast}$ then $f(p_{i})=f'(p_{i})$ for all $i$ where $p_{1}, \ldots, p_{w}$ are the marked points of $F^{w}_{g,n}$. Thus
\[
\mathrm{Map}(F^{w}_{g,n}, pts; X,N)_{\ast} = \coprod_{\underline{n} \in N^{w}} \mathrm{Map}^{\underline{n}}(F^{w}_{g,n}, pts; X,N)_{\ast}
\]
where the superscript $\underline{n}$ means that we only allow those $f$ that satisfy $f(p_{1}, \ldots, p_{w})=\underline{n} \in N^{w}$.
Since $X$ is path-connected, for all $\underline{n} \in N^{w}$,
\begin{eqnarray*}
\mathrm{Map}^{\underline{n}}(F^{w}_{g,n}, pts; X,N)_{\ast} & \simeq & \mathrm{Map}^{\underline{\ast}}(F^{w}_{g,n}, pts; X,N)_{\ast} \\
& = & \mathrm{Map}(F_{g,n+w},\partial F_{g,n+w}; X,\ast).
\end{eqnarray*}
The identifications given are all $\Gamma_{g,n+w}$-equivariant.
\end{proof}

\vsp
\begin{proof}[Proof of Theorem \ref{homstab1}.]
Suppose that $F$ has genus $g$, $w$ windows and a total of $n+w$ boundary components. By considering the effect of a diffeomorphism on the windows of $F$ we obtain a map $\mathrm{Diff}_{\mathrm{oc}}^{+}(F) \to \Sigma_{w} \wr \mathrm{Diff}^{+}(S^{1}) \simeq \Sigma_{w} \wr S^{1}$. The kernel is the group of orientation-preserving diffeomorphisms that fix $\partial_{in}F \cup \partial_{out}F$, and the windows, pointwise, call it $\mathrm{Diff}^{\wedge}(F)$. Taking classifying spaces we obtain a fibration
\[
B\mathrm{Diff}^{\wedge}(F) \to B\mathrm{Diff}_{\mathrm{oc}}^{+}(F) \to B(\Sigma_{w} \wr S^{1}).
\]
Using the fact that the diffeomorphism groups have contractible components \cite{EE,ES}, we have a homotopy fibration
\[
B\Gamma_{g,n+w} \to B\Gamma_{\mathrm{oc}}(F) \to B(\Sigma_{w} \wr S^{1}).
\]
The inclusion $F \to F^{\#}$ induces a map from this fibration to
\[
B\Gamma_{g^{\#},n^{\#}+w} \to B\Gamma_{\mathrm{oc}}(F^{\#}) \to B(\Sigma_{w} \wr S^{1}).
\]

Consider the Serre spectral sequences for these fibrations, taken with twisted coefficients in $H_{r}(
\mathrm{Map}(F, \partial_{f}F; X,N)_{\ast})$, $H_{r}(
\mathrm{Map}(F^{\#}, \partial_{f}F^{\#}; X,N)_{\ast})$ respectively. There is an induced map of spectral sequences and on the $E^{2}$-page it is of the form
\begin{eqnarray*}
&&E^{2}_{p,q} = H_{p}(B(\Sigma_{w} \wr S^{1}); H_{q}(\Gamma_{g,n+w}, H_{r}(\mathrm{Map}(F, \partial_{f}F; X,N)_{\ast}))) \\
&& \to \tilde{E}^{2}_{p,q} = H_{p}(B(\Sigma_{w} \wr S^{1}); H_{q}(\Gamma_{g^{\#},n^{\#}+w}, H_{r}(\mathrm{Map}(F^{\#}, \partial_{f}F^{\#}; X,N)_{\ast}))).
\end{eqnarray*}
By Lemma \ref{Noah}, this is isomorphic to the map
\begin{eqnarray*}
&& H_{p}(B(\Sigma_{w} \wr S^{1}); \bigoplus_{N^{w}}H_{q}(\Gamma_{g,n+w}, H_{r}(\mathrm{Map}(F_{g,n+w},\partial F_{g,n+w}; X,\ast)))) \\
&& \to H_{p}(B(\Sigma_{w} \wr S^{1}); \bigoplus_{N^{w}}H_{q}(\Gamma_{g^{\#},n^{\#}+w}, H_{r}(\mathrm{Map}(F_{g^{\#},n^{\#}+w},\partial F_{g^{\#},n^{\#}+w}; X,\ast))))
\end{eqnarray*}
which, by Theorem \ref{india}, is an isomorphism for $g \geq 2q+r+2$. The map on the abutments is precisely the map in the statement of the theorem so applying the Zeeman comparison theorem yields the result.
\end{proof}

\vsp
\begin{proof}[Proof of Theorem \ref{homstab2}.]
Let $g,n,w$ be as above. As in the proof of Theorem \ref{homstab1} we have a map of fibrations
\begin{eqnarray}
\begin{diagram}
\node{B\Gamma_{g,n+w}} \arrow{e} \arrow{s} \node{B\Gamma_{\mathrm{oc}}(F)} \arrow{e} \arrow{s} \node{B(\Sigma_{w} \wr S^{1})}  \arrow{s} \\
\node{B\Gamma_{g,n+w+1}} \arrow{e} \node{B\Gamma_{\mathrm{oc}}(F^{\#})} \arrow{e} \node{B(\Sigma_{w+1} \wr S^{1})} \label{diagram}
\end{diagram}
\end{eqnarray}
Again we look at the twisted Serre spectral sequences associated to these fibrations and the induced map between them. On the $E^{2}$-page this now takes the form
\begin{eqnarray*}
&& E^{2}_{p,q} = H_{p}(B(\Sigma_{w} \wr S^{1}); H_{q}(\Gamma_{g,n+w}, H_{r}(\mathrm{Map}(F, \partial_{f}F; X,N)_{\ast})))
\\ && \to \tilde{E}^{2}_{p,q} = H_{p}(B(\Sigma_{w+1} \wr S^{1}); H_{q}(\Gamma_{g,n+w+1}, H_{r}(\mathrm{Map}(F^{\#}, \partial_{f}F^{\#}; X,N)_{\ast}))). 
\end{eqnarray*}
Since $N$ is a point, the coefficient groups in these homology groups are the same as
\begin{eqnarray*}
& H_{q}(\Gamma_{g,n+w}, H_{r}(\mathrm{Map}(F_{g,n+w},\partial F_{g,n+w}; X,\ast))),\\ 
& H_{q}(\Gamma_{g,n+w+1}, H_{r}(\mathrm{Map}(F_{g,n+w},\partial F_{g,n+w}; X,\ast)))
\end{eqnarray*}
so by Theorem \ref{india} the map of coefficients is an isomorphism when $g \geq 2q+r+2$. By the next lemma, the $\Sigma_{w}$-action on these coefficients is trivial in this range. Thus, in the stable range, the map on the $E^{2}$-page is of the form
\[
H_{p}(B(\Sigma_{w} \wr S^{1}), G) \to H_{p}(B(\Sigma_{w+1} \wr S^{1}), G)
\]
for a constant, untwisted coefficient group $G$. Since $B(\Sigma_{w} \wr S^{1}) \simeq E \Sigma_{w} \times_{\Sigma_{w}} (BS^{1})^{w}$ this is an isomorphism for $w \geq 2p+6$, see \cite{nonoble} Theorem 5. An application of the Zeeman comparison theorem completes the proof.
\end{proof}

\begin{lemma}
\label{Catatonia}
For $g \geq 2q+r+2$, the $\Sigma_{w}$-action on
\[
H_{q}(\Gamma_{g,n+w}, H_{r}(\mathrm{Map}(F_{g,n+w},\partial F_{g,n+w}; X,\ast)))
\]
is trivial.
\end{lemma}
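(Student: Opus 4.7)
Since $\Sigma_w$ is generated by transpositions, it suffices to prove that each transposition $\tau = (ij)$ acts trivially. Reading off the monodromy of the Serre spectral sequence for the fibration~(\ref{diagram}), the $\tau$-action is the conjugation action on $\Gamma_{g,n+w}$ (and the induced pullback on the coefficient module) by any lift $\hat\tau \in \Gamma_{\mathrm{oc}}(F)$ of $\tau$, that is, a mapping class of $F_{g,n+w}$ that exchanges the two window boundary components $\partial_i, \partial_j$ and fixes all other boundaries pointwise. Such an $\hat\tau$ may be realized by a half-twist supported in a pair-of-pants neighborhood $D \subset F_{g,n+w}$ whose two inner boundary circles are $\partial_i$ and $\partial_j$. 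My plan is to realize $\hat\tau$ as an inner automorphism after a suitable stabilization and then apply Theorem~\ref{india}.

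Attach a pair of pants $P$ to $F_{g,n+w}$ by identifying two of its boundary circles with $\partial_i$ and $\partial_j$. The resulting surface $\tilde F$ is diffeomorphic to $F_{g+1,n+w-1}$, with $\partial_i, \partial_j$ now interior curves and the third boundary $\partial_0$ of $P$ a new boundary of $\tilde F$. Extension by the identity on $P$ gives a stabilization $\Gamma_{g,n+w} \to \Gamma_{g+1,n+w-1}$. The subsurface $D \cup P \subset \tilde F$ is a torus with two boundary components, and, choosing parametrizations of $P$ compatible with those of $D$ along $\partial_i \cup \partial_j$, the half-twist $\hat\tau$ of $D$ extends to a diffeomorphism $\tilde\tau$ of $\tilde F$ which is the identity outside $D \cup P$ (and so lies in $\Gamma_{g+1,n+w-1}$), preserves the subsurface $F_{g,n+w}$ setwise, and restricts to $\hat\tau$ there. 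Conjugation by $\tilde\tau$ on the image of $\Gamma_{g,n+w}$ in $\Gamma_{g+1,n+w-1}$ therefore agrees with the $\tau$-action; the analogous statement for the coefficient module holds because maps are extended across $P$ by the constant map $c_\ast$, which is preserved by $\tilde\tau|_P$.

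By Theorem~\ref{india}, the stabilization above induces an isomorphism on $H_q(-, H_r(\mathrm{Map}(-,\partial;X,\ast)))$ whenever $g \geq 2q+r+2$ (this stabilization is non-standard, but factors through standard ones in the stable range); by the previous paragraph the isomorphism is $\tau$-equivariant. Since $\tilde\tau$ lies in $\Gamma_{g+1,n+w-1}$, conjugation by it is inner and acts trivially on the target, and pulling back across the equivariant isomorphism gives triviality of the $\tau$-action on the source. The principal obstacle will be the geometric construction of $\tilde\tau$: one needs a single diffeomorphism that simultaneously swaps $\partial_i, \partial_j$ inside $\tilde F$, restricts to the prescribed half-twist $\hat\tau$ on $F_{g,n+w}$, and glues correctly to a half-twist of the pair of pants $P$ along the common boundary $\partial_i \cup \partial_j$.
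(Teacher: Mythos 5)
Your argument is correct, but it runs along a different track from the paper's. The paper also begins from the description of the $\Sigma_{w}$-action as conjugation/module action by a lift in $\Gamma_{\mathrm{oc}}(F)$, but it then \emph{destabilizes}: writing $F=F_{1}\# F_{2}$ with $F_{2}$ a cylinder containing all $w$ windows, it uses Theorem \ref{india} to represent every class of $H_{q}(\Gamma_{g,n+w};H_{r}(\mathrm{Map}(F_{g,n+w},\partial F_{g,n+w};X,\ast)))$ by a bar-resolution cycle $z\otimes v$ coming from $\Gamma_{g,n}$, i.e.\ supported on the window-free piece $F_{1}$, and then chooses the lift $\phi_{\sigma}$ of the \emph{whole} permutation $\sigma$ supported in $F_{2}$; disjoint supports force $\phi_{\sigma}$ to commute with everything in $z$ and to fix $v$, so the action is trivial for all of $\Sigma_{w}$ at once. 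You instead \emph{stabilize}: reduce to a transposition, glue a pair of pants joining the two windows being swapped so that the half-twist lift extends to an element $\tilde\tau$ of the pointwise-boundary-fixing group $\Gamma_{g+1,n+w-1}$, invoke the standard fact that the pair $(c_{\tilde\tau},\tilde\tau\cdot)$ acts trivially on twisted group homology, and pull the conclusion back along the ($\tau$-equivariant) stability isomorphism. Both proofs rest on the same input, Theorem \ref{india}; note that both use it in the stronger, map-level form (that a specific gluing map, with maps to $X$ extended by $c_{\ast}$, induces the isomorphism in the stable range), which is what the stability theorems of Cohen--Madsen/Ivanov/Boldsen actually provide --- your pair-of-pants-along-two-circles map is one of the standard stabilizations, so this is legitimate. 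What the paper's route buys is economy: no reduction to transpositions, no construction of the extended half-twist $\tilde\tau$ or verification of the collar/gluing compatibility you flag as the main obstacle (which is indeed surmountable by taking the half-twists of $D$ and $P$ to agree with the rigid swap on the gluing circles). What your route buys is that it avoids the explicit bar-resolution support argument, replacing it with the purely formal triviality of inner actions.
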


\begin{proof}
The $\Sigma_{w}$-action is induced by the first fibration in (\ref{diagram}). Thus it is given by lifting a permutation $\sigma \in \Sigma_{w}$ to a mapping class $\phi_{\sigma} \in \Gamma_{\mathrm{oc}}(F)$ and then acting by conjugation on $\Gamma_{g,n+w}$ and by left-multiplication on $H_{r}(\mathrm{Map}(F_{g,n+w},\partial F_{g,n+w}; X,\ast))$.

Think of $F$ as $F_{1} \# F_{2}$ where $F_{1}$ has no windows and $F_{2}$ is a cylinder with $w$ windows. Similarly, think of $F_{g,n+w}$ as $F_{g,n} \# F_{0,w+2}$. The inclusion $\Gamma_{g,n+w} \to \Gamma_{\mathrm{oc}}(F)$ restricts to an inclusion $\Gamma_{g,n} \to \Gamma_{\mathrm{oc}}(F_{1})$. By Theorem \ref{india}, there is an isomorphism
\begin{eqnarray*}
&& H_{q}(\Gamma_{g,n}, H_{r}(\mathrm{Map}(F_{g,n},\partial F_{g,n}; X,\ast))) \\
&& \hspace{8mm} \to H_{q}(\Gamma_{g,n+w}, H_{r}(\mathrm{Map}(F_{g,n+w},\partial F_{g,n+w}; X,\ast)))
\end{eqnarray*}
so we can represent an arbitrary element of the latter group by $z \otimes v$ from the bar resolution
\[
\mathrm{Bar}(\Gamma_{g,n}) \otimes_{\Gamma_{g,n}} H_{r}(\mathrm{Map}(F_{g,n},\partial F_{g,n}; X,\ast)).
\]
To calculate $\sigma \cdot (z \otimes v)$ choose $\phi_{\sigma} \in \Gamma_{\mathrm{oc}}(F_{2}) \subseteq \Gamma_{\mathrm{oc}}(F)$. Since $\phi_{\sigma}$ and the diffeomorphisms appearing in $z$ have disjoint support, they commute. For similar reasons $\phi_{\sigma} \cdot v = v$ so $\sigma \cdot (z \otimes v)=z \otimes v$.
\end{proof}

\vsp
\begin{proof}[Proof of Theorem \ref{homstab3}.]
Suppose for simplicity that $F$ itself has no windows so that $F^{w}$ has $w$ windows and $(F^{\#})^{w}$ has $w+1$ windows (see the introduction for the definition of $F^{w}$ and the natural inclusion $F^{w} \to F^{w+1}$). Suppose further that $F$ has genus $g$ and $n$ boundary components; thus $F^{w}$ has genus $g+w$ and a total of $n+w$ boundary components. Let $\mu_{0}, \mu_{1}: \Sigma_{w} \wr S^{1} \to \Sigma_{w+1} \wr S^{1}$ be defined by $\mu_{0}(\sigma, \underline{z}) = (\iota \times \sigma, 1 \times \underline{z})$ and $\mu_{1}(\sigma, \underline{z}) = (\sigma \times \iota, \underline {z} \times 1)$ where $\iota \in \Sigma_{1}$ is the identity permutation.

Taking the limit of diagram (\ref{diagram}) over $F^{w}, (F^{\#})^{w}$ we obtain the following diagram of fibrations
\begin{eqnarray*}
\begin{diagram}
\node{B\Gamma_{\infty,\infty}} \arrow{e} \arrow{s} \node{B\Gamma^{\infty}_{\infty}(F)} \arrow{e} \arrow{s} \node{\mathrm{hocolim}_{w \to \infty} \hspace{1ex} B(\Sigma_{w} \wr S^{1})}  \arrow{s} \\
\node{B\Gamma_{\infty,\infty+1}} \arrow{e} \node{B\Gamma^{\infty}_{\infty}(F^{\#})} \arrow{e} \node{\mathrm{hocolim}_{w \to \infty} \hspace{1ex}B(\Sigma_{w+1} \wr S^{1})}
\end{diagram}
\end{eqnarray*}

Consider the Serre spectral sequences associated to these two fibrations, with twisted coefficients taken in
\[
H_{r}(\mathrm{Map}_{\infty}^{\infty}(F, \partial_{f}F; X,N)_{\ast}) \hspace{3mm} \text{and} \hspace{3mm} H_{r}(\mathrm{Map}_{\infty}^{\infty}(F^{\#}, \partial_{f}F^{\#}; X,N)_{\ast})
\]
respectively. There is an induced map between these spectral sequences and the map on the abutments is precisely the one in the theorem. Thus, by the Zeeman comparison theorem, it suffices to show that the map on the $E^{2}$-page is an isomorphism. The rest of the proof is devoted to that task.

The $E^{2}_{p,q}$-terms are
\begin{eqnarray*}
& H_{p}(\mathrm{hocolim} \hspace{1ex} B(\Sigma_{w} \wr S^{1}); H_{q}(B\Gamma_{\infty,\infty}; H_{r}(\mathrm{Map}_{\infty}^{\infty}(F, \partial_{f}F; X,N)_{\ast}))) \\
& H_{p}(\mathrm{hocolim} \hspace{1ex} B(\Sigma_{w+1} \wr S^{1}); H_{q}(B\Gamma_{\infty,\infty+1}); H_{r}(\mathrm{Map}_{\infty}^{\infty}(F^{\#}, \partial_{f}F^{\#}; X,N)_{\ast}))).
\end{eqnarray*}

Since limits and group homology commute these are isomorphic to the limits, as $w \to \infty$, of
\begin{eqnarray*}
\label{ocs}
& H_{p}(B(\Sigma_{w}\wr S^{1}); H_{q}(\Gamma_{g+w,n+w}; H_{r}(\mathrm{Map}(F^{w}, \partial_{f}F^{w}; X,N)_{\ast}))), \\
& H_{p}(B(\Sigma_{w+1}\wr S^{1}); H_{q}(\Gamma_{g+w,n+w+1};H_{r}(\mathrm{Map}((F^{\#})^{w}, \partial_{f}(F^{\#})^{w}; X,N)_{\ast}))).
\end{eqnarray*}
It will be important for us to keep track of the maps used to form limits. The maps used here are induced by $B\mu_{1}$ on the spaces and by $F^{w} \hookrightarrow F^{w+1}$ on the coefficients. 

The map between the $E^{2}$-terms can be realized by finite-level maps
\begin{eqnarray}
\label{beatles}
&& H_{p}(B(\Sigma_{w}\wr S^{1}); H_{q}(\Gamma_{g+w,n+w}; H_{r}(\mathrm{Map}(F^{w}, \partial_{f}F^{w}; X,N)_{\ast}))) \\
\nonumber
&& \to H_{p}(B(\Sigma_{w+1}\wr S^{1}); H_{q}(\Gamma_{g+w,n+w+1}; H_{r}(\mathrm{Map}((F^{\#})^{w}, \partial_{f}(F^{\#})^{w}; X,N)_{\ast}))).
\end{eqnarray}
These are induced on the spaces by $B\mu_{0}$ and on the coefficients by $F^{w} \hookrightarrow (F^{\#})^{w}$.

By Lemma \ref{Noah}, the group in (\ref{beatles}) is isomorphic to
\[
H_{p}(B(\Sigma_{w} \wr S^{1}); \bigoplus_{\underline{n} \in N^{w}} H_{q}(\Gamma_{g+w,n+w}; H_{r}(\mathrm{Map}(F_{g+w,n+w},\partial F_{g+w,n+w}; X,\ast)))).
\]
By Lemma \ref{Catatonia} we know that the $\Sigma_{w}$-action on each of these summands is trivial provided the genus is sufficiently large. Furthermore, by Theorem \ref{india} we know that $H_{q}(\Gamma_{g+w,n+w}; H_{r}(\mathrm{Map}(F_{g+w,n+w},\partial F_{g+w,n+w}; X,\ast)))$ is independent of $g,n$ and $w$ for large genus. We conclude that for sufficiently large $w$, the group in (\ref{beatles}) is of the form
\[
H_{p}(B(\Sigma_{w} \wr S^{1}); \bigoplus_{\underline{n} \in N^{w}} A)
\]
where $A$ is an abelian group with trivial $\Sigma_{w}$-action and the action of $\Sigma_{w}$ on the coefficients is induced by the permutation action on $N^{w}$.

Let $\nu_{0}: \bigoplus_{\underline{n} \in N^{w}} A \to \bigoplus_{\underline{n} \in N^{w+1}} A$ be the map induced by $\tilde{\nu}_{0}: N^{w} \to N^{w+1}$ with $\tilde{\nu}_{0}(\underline{n}) = \ast \times \underline{n}$. Similarly we have $\nu_{1}: \bigoplus_{\underline{n} \in N^{w}} A \to \bigoplus_{\underline{n} \in N^{w+1}} A$ induced by $\tilde{\nu}_{1}$ with $\tilde{\nu}_{1}(\underline{n})= \underline{n} \times \ast$.

Now, 
\begin{eqnarray*}
\label{cast}
& E^{2}_{p,q} \cong \lim_{w \to \infty} H_{p}(B(\Sigma_{w} \wr S^{1}); \bigoplus_{\underline{n} \in N^{w}}A) \\
\label{cast2}
& \tilde{E}^{2}_{p,q} \cong \lim_{w \to \infty} H_{p}(B(\Sigma_{w+1} \wr S^{1}); \bigoplus_{\underline{n} \in N^{w+1}}A).
\end{eqnarray*}
and the maps used to form these limits are $H_{p}(B\mu_{1}, \nu_{1})$. Under these identifications, the map between the $E^{2}$-pages is induced by the finite-level maps $H_{p}(B \mu_{0}, \nu_{0})$.

Let $\tau \in \Sigma_{w+1}$ be the block permutation which sends $(1,2,\ldots,w+1)$ to $(2, \ldots, w+1,1)$. Let $c_{(\tau,1)}:\Sigma_{w+1} \wr S^{1} \to \Sigma_{w+1} \wr S^{1}$ denote conjugation by $(\tau,1)$ and $\rho_{\tau}: \bigoplus_{\underline{n} \in N^{w+1}} A \to \bigoplus_{\underline{n} \in N^{w+1}} A$ be given by the action of $\tau$. Then $c_{(\tau,1)}\circ \mu_{0} = \mu_{1}$ and $\rho_{\tau} \circ \nu_{0}=\nu_{1}$ so
\[
H_{p}(Bc_{(\tau,1)},\rho_{\tau}) \circ H_{p}(B \mu_{0}, \nu_{0}) = H_{p}(B\mu_{1}, \nu_{1}).
\]
It is a fundamental property of group cohomology that $H_{p}(Bc_{(\tau,1)},\rho_{\tau})$ is the trivial map. Thus $H_{p}(B \mu_{0}, \nu_{0})$ is the same as $H_{p}(B\mu_{1}, \nu_{1})$ and hence it is trivial in the limit since $H_{p}(B\mu_{1}, \nu_{1})$ is used to form that limit. We conclude that the map on the $E^{2}$-page is an isomorphism.
\end{proof}



\bibliographystyle{plain}

\bibliography{thesis}

\end{document}